\newcommand\blfootnote[1]{%
  \begingroup
  \renewcommand\thefootnote{}\footnote{#1}%
  \addtocounter{footnote}{-1}%
  \endgroup
}
\newtheorem{theorem}{Theorem}
\newtheorem{prop}{Proposition}
\newtheorem{lemma}{Lemma}
\newtheorem{remark}{Remark}
\newtheorem{claim}{Claim}
\newtheorem{definition}{Definition}
\newtheorem{cor}{Corollary}
\numberwithin{equation}{section}
\numberwithin{theorem}{section}
\numberwithin{definition}{section}
\numberwithin{cor}{section}
\numberwithin{prop}{section}
\numberwithin{remark}{section}
\numberwithin{claim}{section}
\numberwithin{lemma}{section}
\def\Xint#1{\mathchoice
  {\XXint\displaystyle\textstyle{#1}}%
  {\XXint\textstyle\scriptstyle{#1}}%
  {\XXint\scriptstyle\scriptscriptstyle{#1}}%
  {\XXint\scriptscriptstyle\scriptscriptstyle{#1}}%
  \!\int}
\def\XXint#1#2#3{{\setbox0=\hbox{$#1{#2#3}{\int}$}
  \vcenter{\hbox{$#2#3$}}\kern-.5\wd0}}
\def\dashint{\Xint-}
\author{Gang Liu}
\address{Department of Mathematics\\University of California, Berkeley\\Berkeley, CA 94720}
\email{gangliu@math.berkeley.edu}
\title[Gromov-Hausdorff limit]{Gromov-Hausdorff limits of K\"ahler manifolds with bisectional curvature lower bound I}
\date{}
\begin{document}
\begin{abstract}
Given a sequence of complete(compact or noncompact) K\"ahler manifolds $M^n_i$ with bisectional curvature lower bound and noncollapsed volume, we prove that the pointed Gromov-Hausdorff limit is homeomorphic to a normal complex analytic space. The complex analytic structure is the natural ``limit" of complex structure of $M_i$. \end{abstract}\maketitle
\section{\bf{Introduction}}
\blfootnote{The author was partially supported by NSF grant DMS 1406593.}

In this paper, we consider the Gromov-Hausdorff limits of K\"ahler manifolds with bisectional curvature lower bound. The main interest is the degeneration of the complex structure. One motivation is from the uniformization conjecture of Yau which states that a complete noncompact K\"ahler manifold with positive bisectional curvature is biholomorphic to $\mathbb{C}^n$. Another motivation is from Alexandrov geometry or manifolds with sectional curvature lower bound, in particular, Perelman's stability theorem \cite{[P]}. For K\"ahler manifolds with bounded Ricci curvature or K\"ahler-Einstein case, see the notable works \cite{[DS]}\cite{[Ti5]}.
\begin{definition}\cite{[LW]} \cite{[TY]}
On a K\"ahler manifold $M^n$, we say the bisectional curvature is greater than or equal to $K$ (simply denoted by $BK\geq K$), if
\begin{equation}
\frac{R(X, \overline{X}, Y, \overline{Y})}{||X||^2||Y||^2+|\langle X, \overline{Y}\rangle|^2}\geq K 
\end{equation}
for any two nonzero vectors $X, Y\in T^{1, 0}M$.\end{definition} Observe that the equality holds for complex space forms.
Note that the bisectional curvature lower bound condition is weaker than the sectional curvature lower bound. It is stronger than the Ricci curvature lower bound. In fact, by taking the trace, we have $R_{i\overline{j}}\geq (n+1)Kg_{i\overline{j}}$.

\begin{theorem}\label{thm1}
Let $(M_\infty, p_\infty)$ be the pointed Gromov-Hausdorff limit of a sequence of complete(compact or noncompact) K\"ahler manifolds $(M^n_i, p_i)$ with $BK(M_i)\geq -1$ and $vol(B(p_i, 1))
\geq v>0$. Then $(M_\infty, p_\infty)$ is homeomorphic to a normal complex analytic space. \end{theorem}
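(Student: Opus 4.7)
The plan is to adapt the Donaldson--Sun embedding strategy to the present non-polarized setting, using the author's three-circle theorem for holomorphic functions on K\"ahler manifolds with bisectional curvature lower bound in place of peak sections on a line bundle.

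First, since $BK\geq -1$ gives $\mathrm{Ric}\geq -(n+1)$ with noncollapsed volume, Cheeger--Colding structure theory applies, producing a decomposition $M_\infty=\mathcal R\sqcup\mathcal S$, with $\mathcal R$ open and dense, smooth convergence $M_i\to M_\infty$ on compact subsets of $\mathcal R$, and $\mathcal S$ of real codimension at least two. The K\"ahler structures on $M_i$ transfer smoothly to $\mathcal R$, giving it a canonical complex manifold structure; the task is to extend this structure across $\mathcal S$ and verify normality.

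The core construction is the following. For $q_\infty\in M_\infty$ with lifts $q_i\in M_i$, pick a tangent cone $C_q$ at $q_\infty$ and, by iterating the main theorem on the cross-section (or by invoking it directly on rescalings of the $M_i$'s, the base case being Euclidean space), equip $C_q$ with its natural complex analytic cone structure. Take a finite collection of homogeneous holomorphic functions on $C_q$ that separate points and provide an embedding of a neighborhood of the vertex, transplant them to approximately holomorphic cut-off functions on rescaled balls around $q_i$, and correct these to genuine holomorphic functions $f_j^{(i)}$ via H\"ormander's $L^2$ $\bar\partial$-estimate (which is available because of the Ricci lower bound). The three-circle theorem gives scale-invariant $L^\infty$ bounds on the $f_j^{(i)}$, so that after a diagonal extraction one obtains holomorphic functions $f_j^\infty$ on a neighborhood of $q_\infty$ in the complex analytic sense on $\mathcal R$.

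The main difficulty is the partial $C^0$-type estimate: proving that finitely many such $f_j^\infty$ in fact separate points and yield a topological embedding of a neighborhood of $q_\infty$ into $\mathbb C^N$. Nondegeneracy (a definite lower bound on $\sup|f_j^\infty|$) follows from the three-circle monotonicity applied to the $f_j^{(i)}$; injectivity on $\mathcal R$ follows from smooth convergence together with uniform gradient estimates; and injectivity at singular points is achieved by distinguishing tangent cones. Once a local embedding is built, Remmert's proper mapping theorem shows the image is a complex analytic subvariety of $\mathbb C^N$, the local charts glue into a global complex analytic variety $V$ because the $f_j^\infty$ are intrinsically determined by the geometry, and the resulting continuous bijection $M_\infty\to V$ is a homeomorphism by properness. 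Normality of $V$ then follows from local irreducibility of each tangent cone combined with the second Riemann extension theorem across the codimension-two singular locus. The most delicate point throughout is this partial $C^0$-type estimate, which without a polarization hinges essentially on the three-circle theorem to guarantee uniform-in-$i$ existence and nondegeneracy of holomorphic functions at every scale.
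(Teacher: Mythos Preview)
Your proposal has two genuine gaps that the paper's proof is specifically designed to overcome.

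\textbf{First, there is no smooth convergence on $\mathcal R$.} With only a lower bound on (bisectional, hence Ricci) curvature, the convergence $M_i\to M_\infty$ on the regular set is merely Gromov--Hausdorff, not $C^\infty$. The statement ``K\"ahler structures on $M_i$ transfer smoothly to $\mathcal R$, giving it a canonical complex manifold structure'' is exactly what fails here and what the paper spends Sections~4--6 constructing by hand. Likewise, ``equip $C_q$ with its natural complex analytic cone structure'' presupposes what is not known: an arbitrary tangent cone of $M_\infty$ has no a priori complex structure. The paper never transplants from tangent cones; instead it works entirely on $M_i$, building strictly plurisubharmonic weights via a heat-flow maximum principle (Theorem~\ref{thm2}, Corollary~\ref{cor1}) that crucially uses the \emph{bisectional} curvature lower bound (through a Bochner formula for the complex Hessian), then runs H\"ormander's estimate on resulting Stein domains. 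A Ricci lower bound alone does not furnish the weight you need.

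\textbf{Second, real codimension two is not enough.} Your normality step (``second Riemann extension theorem across the codimension-two singular locus'') cannot work: the metric singular set $\mathcal S$ is only known to have real codimension $2$, i.e.\ complex codimension $1$, which is insufficient for the Riemann extension theorem and for normality. The paper's key structural insight is Proposition~\ref{prop2} / Corollary~\ref{cor-2}: at every point of $\mathcal{WE}_{2n-2}$ (where some tangent cone splits off $\mathbb R^{2n-2}$, hence is $\mathbb C^{n-1}\times(\text{2-cone})$) one can build an honest holomorphic chart on $M_i$ with uniform control. Since $\mathcal{WE}_{2n-3}=\mathcal{WE}_{2n-2}$ in the K\"ahler case, the complement of the resulting open set $G\subset M_\infty$ has Hausdorff codimension $\geq 4$, i.e.\ complex codimension $\geq 2$, and \emph{that} is what makes the extension and normality arguments in Section~8 go through. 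Your outline misses this codimension improvement entirely, and without it the final step collapses.
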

\begin{remark}\label{rm1}
The complex analytic structure on $M_\infty$ is induced from the limit of holomorphic functions on small balls of $M_i$. Note this is very similar to \cite{[DS]}, where holomorphic functions are replaced by holomorphic sections. \end{remark}

\begin{remark}
The conclusion of theorem \ref{thm1} might be surprising at the first glance: the singularity of a normal complex analytic variety has real codimension at least $4$ while the metric singularity might have codimension $2$. To resolve this problem, we actually prove that metric singularities with tangent cones splitting off $\mathbb{R}^{2n-2}$ are regular in the complex analytic sense. Compare with \cite{[DS]}, where it was shown that complex analytic singularities are the same as metric singularities in the K\"ahler-Einstein case.
\end{remark}

It is a general fact that complex analytic spaces are locally contractible. See, for example, corollary $5.2$ in \cite{[D1]}. Therefore, we conclude the following
\begin{cor}
The limit space $M_\infty$ is locally contractible.
\end{cor}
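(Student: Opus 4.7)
The plan is to invoke Theorem \ref{thm1} and then reduce the assertion to a known structural result about complex analytic varieties. By Theorem \ref{thm1}, there exists a homeomorphism $\Phi: M_\infty \to V$, where $V$ is a normal complex analytic variety. Since local contractibility is a topological property preserved by homeomorphism, it suffices to show that every point of $V$ has a fundamental system of contractible open neighborhoods.

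For the second step, I would cite the general fact (as the excerpt already points to, namely Corollary 5.2 in \cite{[D1]}) that any complex analytic variety is locally contractible. The standard way to see this is through \L{}ojasiewicz's theorem that every real (and in particular complex) analytic variety admits a semi-analytic triangulation, or alternatively through the existence of local conical structures for analytic sets at each point: a small enough analytic neighborhood of $x \in V$ is homeomorphic to the cone over its link, and cones are contractible to their apex. Either route delivers the required fundamental system of contractible open sets.

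Combining these two ingredients, given $x \in M_\infty$ and any neighborhood $U$ of $x$, choose a contractible open neighborhood $W \subset \Phi(U)$ of $\Phi(x)$ in $V$, and pull back by $\Phi^{-1}$ to obtain a contractible open neighborhood of $x$ contained in $U$. This gives the corollary. The only non-routine step is Theorem \ref{thm1} itself (which is assumed here); once granted, the argument is immediate and the main obstacle is merely locating a clean reference for the local contractibility of complex analytic varieties, which the excerpt has already done.
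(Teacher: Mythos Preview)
Your proposal is correct and follows essentially the same approach as the paper: invoke Theorem \ref{thm1} to identify $M_\infty$ homeomorphically with a normal complex analytic variety, then cite the general fact (Corollary~5.2 in \cite{[D1]}) that complex analytic varieties are locally contractible. The additional remarks you give about triangulations and local conical structure are fine elaborations but not required for the argument.
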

\begin{remark} When the sectional curvature has a lower bound, the local contractibility of $M_\infty$ was proved in [30][32].
\end{remark}

During the proof of theorem \ref{thm1}, we obtain a topological result for complete K\"ahler surfaces with positive bisectional curvature:
\begin{cor}\label{cor0}
Let $(M^2, p)$ be a complete noncompact K\"ahler surface with positive bisectional curvature and maximal volume growth. Then $M$ is simply connected. Maximal volume growth means $vol(B(p, r))\geq cr^{4}$ for some $c>0$ and for all $r$.
\end{cor}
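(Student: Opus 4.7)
The plan is to use Theorem \ref{thm1} on the blow-down sequence $(M,r_i^{-2}g,p)$ with $r_i\to\infty$ to pin down the tangent cone at infinity of $M$ as a flat quotient $\mathbb{C}^2/\Gamma$, and then combine this with Anderson's fundamental-group finiteness plus Bishop--Gromov volume rigidity to rule out any nontrivial covering of $M$. Positive bisectional curvature gives $BK\ge 0\ge -1$ after rescaling, and maximal volume growth provides the uniform noncollapsing bound $vol_{r_i^{-2}g}(B(p,1))=r_i^{-4}vol_g(B(p,r_i))\ge c>0$. By Gromov compactness together with Theorem \ref{thm0} and Theorem \ref{thm1}, a subsequence converges to a pointed metric cone $(M_\infty,p_\infty)$ homeomorphic to a normal complex analytic variety. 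For $n=2$ the singular set of such a variety has real codimension at least four, hence is isolated, and cone invariance forces it to equal $\{p_\infty\}$. Combining Cheeger--Colding regularity on the regular part (noncollapsed Ricci limit forces Ricci-flatness there) with the K\"ahler cone structure and Hamilton's theorem on $3$-manifolds of positive Ricci curvature, the link is a spherical space form $S^3/\Gamma$, and so $M_\infty\cong\mathbb{C}^2/\Gamma$ isometrically for some finite $\Gamma\subset SU(2)$. In particular the asymptotic volume ratio satisfies $AVR(M)=1/|\Gamma|$.

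By Anderson's theorem, $\pi_1(M)$ is finite; set $k:=|\pi_1(M)|$ and let $\pi:\tilde M\to M$ be the universal cover. Then $\tilde M$ is again a complete noncompact K\"ahler surface with positive bisectional curvature and maximal volume growth, and $AVR(\tilde M)=k\cdot AVR(M)$. The same analysis applied to $\tilde M$ identifies its tangent cone at infinity as $\mathbb{C}^2/\tilde\Gamma$ with $\tilde\Gamma\subset SU(2)$ finite, satisfying $AVR(\tilde M)=1/|\tilde\Gamma|$; hence $|\Gamma|=k|\tilde\Gamma|$. The corollary then reduces to showing $\tilde\Gamma=\{e\}$: granting this, $AVR(\tilde M)=1$, and Colding's volume rigidity ($Ric\ge 0$ together with asymptotic volume ratio equal to that of Euclidean space forces isometry with Euclidean space) would force $\tilde M$ to be isometric to flat $\mathbb{C}^2$. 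This contradicts strict positivity of the bisectional curvature unless $k=1$, which is exactly $\pi_1(M)=\{e\}$.

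To establish $\tilde\Gamma=\{e\}$, my plan is to promote the local holomorphic charts furnished by Corollary \ref{cor-2} on large balls of the blow-downs of $\tilde M$ to a single globally defined proper holomorphic map $F:\tilde M\to\mathbb{C}^2/\tilde\Gamma$ that approximates the Gromov--Hausdorff identification at infinity. Such a map is close to the identity near infinity, hence proper and of degree one; since both source and target are normal and $F$ is a proper surjective holomorphic map of degree one, $F$ is a biholomorphism. But $\tilde M$ is smooth, so the target $\mathbb{C}^2/\tilde\Gamma$ must be smooth as well, forcing $\tilde\Gamma=\{e\}$. The matching of local holomorphic charts across different scales into a single global map uses the three-circle theorem (Theorem \ref{thm-3}) and the polynomial growth estimates on holomorphic functions that the three-circle convexity enables, together with H\"ormander's $L^2$-theory (Theorem \ref{thm-2}) to perturb approximate holomorphic functions into exact ones with small error and to control $\tilde\Gamma$-invariant polynomial generators of the coordinate ring of $\mathbb{C}^2/\tilde\Gamma$ on $\tilde M$.

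The hard part is the construction of the global holomorphic map $F:\tilde M\to\mathbb{C}^2/\tilde\Gamma$ from the scale-by-scale data supplied by Corollary \ref{cor-2}. The local charts there are adapted to a single fixed scale, and matching them across all scales requires analyticity-exploiting estimates (three-circle convexity, finite dimension of spaces of polynomial-growth holomorphic functions) analogous to those used to construct the complex analytic structure on $M_\infty$ itself in Sections 5--7 of the paper. Once the global map is in hand, the degree computation from the Gromov--Hausdorff approximation and the smoothness obstruction $\tilde\Gamma=\{e\}$ are comparatively routine.
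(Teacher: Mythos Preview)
Your proposal takes a completely different route from the paper's and contains several serious gaps.

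The paper's argument is short and direct, and does not invoke Theorem~\ref{thm1} at all. Assume $\pi_1(M)\neq 1$ and fix a nontrivial loop $\gamma$. Positive Ricci curvature (from $BK>0$) together with the second variation formula implies there is no length minimizer in the free homotopy class of $\gamma$, so one obtains loops $\gamma_i$ homotopic to $\gamma$ with $|\gamma_i|\le|\gamma|$ escaping to infinity. Choose $q_i\in\gamma_i$, set $r_i=d(p,q_i)\to\infty$, and blow down by $r_i^{-2}$. The limit $(X,p_\infty)$ is a metric cone and $q_i\to q_\infty$ with $d(p_\infty,q_\infty)=1$. Since $q_\infty$ is not the cone vertex, the tangent cone at $q_\infty$ splits off a line, and by Cheeger--Colding--Tian parity this upgrades to $\mathbb{R}^2=\mathbb{C}$; hence small balls around $q_i$ are Gromov--Hausdorff close to balls in $(\mathbb{C},0)\times(Z,o)$. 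Corollary~\ref{cor-1} then produces a contractible open set $\Omega_i\ni q_i$ containing a metric ball of definite radius in the rescaled metric. As $|\gamma_i|/r_i\to 0$, eventually $\gamma_i\subset\Omega_i$, so $\gamma_i$ is null-homotopic --- contradiction.

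Your approach has the following problems.

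\textbf{(i)} The claim ``noncollapsed Ricci limit forces Ricci-flatness'' is false. Limits of manifolds with $Ric\ge 0$ have $Ric\ge 0$ on the regular part; they are not Ricci-flat unless the approximating sequence is. Thus the tangent cone at infinity need not be isometric to a flat $\mathbb{C}^2/\Gamma$: its link only satisfies $Ric_N\ge 2g_N$, which does not force the round metric, so Hamilton's theorem gives at best a diffeomorphism type, not an isometry. The identity $AVR=1/|\Gamma|$ is therefore unsupported.

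\textbf{(ii)} Even granting (i), the logic is self-defeating. You propose to prove $\tilde\Gamma=\{e\}$; but by your own reasoning this would give $AVR(\tilde M)=1$, hence $\tilde M$ isometric to flat $\mathbb{C}^2$ by Bishop--Gromov rigidity, contradicting strict positivity of the bisectional curvature \emph{regardless of $k$}. The ``unless $k=1$'' clause is a non sequitur: the argument, if it ran, would show no such $M$ exists at all. In fact it shows the opposite of what you want --- $\tilde\Gamma\neq\{e\}$ always.

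\textbf{(iii)} The ``hard part'' you flag --- constructing a global proper degree-one holomorphic map $\tilde M\to\mathbb{C}^2/\tilde\Gamma$ from the scale-by-scale charts of Corollary~\ref{cor-2} --- is essentially Yau's uniformization conjecture in the maximal-volume-growth case, which the paper explicitly lists as an open problem and as motivation for this work. Your sketch (three-circle convexity, $L^2$ theory) is not a proof, and by (ii) such a biholomorphism cannot exist in general.
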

\begin{remark}
This result is rather weak. However, according to the author's knowledge, it is new. Indeed, there are very few results on topology of complete noncompact K\"ahler manifolds with positive bisectional curvature, even with the assumption that the manifold has maximal volume growth. In a forthcoming paper \cite{[L3]}, we shall continue to study the uniformization conjecture by using the results here.
\end{remark}

 Our strategy to theorem \ref{thm1} is an extension of techniques in \cite{[L2]} to the negatively curved case. We need the Gromov-Hausdorff convergence theory by Cheeger-Colding \cite{[CC1]}\cite{[CC2]}, Cheeger-Colding-Tian \cite{[CCT]}; adaptation of the heat flow theory by Ni-Tam \cite{[NT1]} to negatively curved case (note that here we essentially require the bisectional curvature lower bound, due to a Bochner formula of the complex hessian); H\"ormander's $L^2$-estimate \cite{[Ho]}\cite{[D]}; three circle theorem for negatively curved case \cite{[L1]}.
We also need to localize some argument in \cite{[DS]}.

This paper is organized as follows.
In section $2$, we collect some preliminary results.
Section $3$ is an extension of Ni-Tam's maximum principle to the negatively curved case. The proof is similar to the nonnegatively curved case \cite{[NT1]}.
In section $4$, we construct good holomorphic coordinates near special points of a K\"ahler manifold. Note this is crucial for that the complex analytic singularity has real codimension at least $4$. Section $5$ deals with the separation of points on the limit space. We construct holomorphic coordinates
on $M_\infty$ in section $6$. The structure sheaf on $M_\infty$ is introduced is section $7$. Finally, we complete the proof of theorem \ref{thm1} in section $8$.

Here are some conventions in this paper.
Let $e_\alpha$ be a local unitary frame of $T^{1, 0}M$ and $s$ be a smooth tensor on $M$.
Define $\Delta s = s_{\alpha\overline\alpha}+s_{\overline\alpha\alpha}$. Note this is twice the Laplacian defined in \cite{[NT1]}. Also define $|\nabla u|^2 = 2u_\alpha u_{\overline{\beta}}g^{\alpha\overline{\beta}}$. We will denote by $\Phi(u_1,..., u_k|....)$ any nonnegative functions depending on $u_1,..., u_k$ and some additional parameters such that when these parameters are fixed, $$\lim\limits_{u_k\to 0}\cdot\cdot\cdot\lim\limits_{u_1\to 0}\Phi(u_1,..., u_k|...) = 0.$$  Let $C(\cdot, \cdot, .., \cdot)$ and $c(\cdot, \cdot, .., \cdot)$ be large and small positive constants respectively, depending only on the parameters. The values might change from line to line. 

\medskip

\begin{center}
\bf  {\quad Acknowledgment}
\end{center}
The author would like to express his deep gratitude to Professors John Lott, Jiaping Wang for many valuable discussions during the work. He also thanks Professors Jeff Cheeger, Tobias Colding, William Minicozzi, Jian Song, Song Sun, Gang Tian for the interest in this work. He particularly thanks Professor Richard Bamler and Yuan Yuan for the careful reading and numerous suggestions.

\section{Preliminary results}
First recall some convergence results for manifolds with Ricci curvature lower bound. 
Let $(M^n_i, y_i, \rho_i)$ be a sequence of pointed complete Riemannian manifolds, where $y_i\in M^n_i$ and $\rho_i$ is the metric on $M^n_i$. By Gromov's compactness theorem, if $(M^n_i, y_i, \rho_i)$ have a uniform lower bound of the Ricci curvature, then a subsequence converges to some $(M_\infty, y_\infty, \rho_\infty)$ in the Gromov-Hausdorff topology. See \cite{[G]} for the definition and basic properties of Gromov-Hausdorff convergence.
\begin{definition}
Let $K_i\subset M^n_i\to K_\infty\subset M_\infty$ in the Gromov-Hausdorff topology. Assume $\{f_i\}_{i=1}^\infty$ are functions on $M^n_i$, $f_\infty$ is a function on $M_\infty$.  
$\Phi_i$ are $\epsilon_i$-Gromov-Hausdorff approximations, $\lim\limits_{i\to\infty} \epsilon_i = 0$. If $f_i\circ \Phi_i$ converges to $f_\infty$ uniformly, we say $f_i\to f_\infty$ uniformly over $K_i\to K_\infty$.
\end{definition}
 In many applications, $f_i$ are equicontinuous. The Arzela-Ascoli theorem applies to the case when the spaces are different.  When $(M_i^n, y_i, \rho_i)\to (M_\infty, y_\infty, \rho_\infty)$ in the Gromov-Hausdorff topology, any bounded, equicontinuous sequence of functions $f_i$ has a subsequence converging uniformly to some $f_\infty$ on $M_\infty$.

Let the complete pointed metric space $(M_\infty^n, y)$ be the Gromov-Hausdorff limit of a sequence of connected pointed Riemannian manifolds, $\{(M_i^n, p_i)\}$, with $Ric(M_i)\geq -(n-1)$ and $vol(B(p_i, 1))\geq v>0$. $M_\infty$ is called a noncollapsed limit. A tangent cone at $y\in M_\infty^m$ is a complete pointed Gromov-Hausdorff limit $((M_\infty)_y, d_\infty, y_\infty)$ of $\{(M_\infty, r_i^{-1}d, y)\}$, where $d, d_\infty$ are the metrics of $M_\infty, (M_\infty)_y$ respectively, $\{r_i\}$ is a positive sequence converging to $0$.
The following is theorem $5.2$ in \cite{[CC2]}:
\begin{theorem}\label{thm0}
Under the assumptions of the last paragraph, any tangent cone is a metric cone.
\end{theorem}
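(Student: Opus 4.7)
The plan is to deduce the conical structure from volume rigidity. First I would rescale: set $d_i = r_i^{-1} d$ so that $(M_\infty, d_i, y) \to ((M_\infty)_y, d_\infty, y_\infty)$. Each space $(M_\infty,d_i)$ is itself a noncollapsed GH limit of smooth manifolds with $Ric \geq -(n-1) r_i^2$ and unit-ball volumes uniformly bounded below (by Colding's volume continuity applied to the original sequence $M_i$). A diagonal argument realizes the tangent cone as a noncollapsed GH limit of smooth Riemannian manifolds whose Ricci lower bounds tend to $0$, which is exactly the class in which Cheeger--Colding's almost-rigidity theorems operate.

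Next I would show that the normalized volume ratio $\theta(r) := V(B_r(y))/(\omega_n r^n)$ extends to a constant function on the tangent cone. Bishop--Gromov comparison, valid on each $M_i$ and preserved under noncollapsed GH convergence via the volume convergence theorem, forces $\theta$ to be nonincreasing in $r$, so $\theta(0^+) := \lim_{r\to 0^+}\theta(r)$ exists. For any fixed $R>0$, the $R$-ball volume in $(M_\infty, d_i, y)$ equals $\omega_n R^n\,\theta(r_i R)$, which tends to $\omega_n R^n\,\theta(0^+)$ as $r_i\to 0$. One more application of volume convergence then gives $V_{d_\infty}(B_R(y_\infty)) = \omega_n R^n\,\theta(0^+)$ for every $R>0$. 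In other words, the Bishop--Gromov monotonicity is saturated with equality at every scale about $y_\infty$.

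The final step is to invoke the Cheeger--Colding volume-cone-implies-metric-cone theorem: on a noncollapsed GH limit of smooth manifolds with $Ric \geq -\epsilon_i \to 0$, constancy of the volume ratio at a point forces the space to be a metric cone with apex there. The mechanism is that the Laplacian comparison $\Delta(d_{y_\infty}^2/2) \leq n$ becomes an equality in integral average on each annulus; an almost-rigidity / Bochner computation then upgrades this to $\mathrm{Hess}(d_{y_\infty}^2/2) = g_\infty$ in the appropriate weak sense, which characterizes a metric cone. The main obstacle lies precisely in this last implication on a possibly singular limit: classical calculus must be replaced by the Cheeger--Colding framework of harmonic approximations, generalized Hessians, and quantitative $L^2$ rigidity estimates, with all sharp equalities recovered only in the GH limit rather than on any individual approximating manifold.
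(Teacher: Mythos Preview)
The paper does not give its own proof of this statement; it is simply quoted as Theorem~5.2 of \cite{[CC2]} and used as a black box. Your outline is the standard Cheeger--Colding argument from that reference: realize the tangent cone, via rescaling and a diagonal argument, as a noncollapsed limit of manifolds with Ricci lower bound tending to $0$; use Bishop--Gromov together with Colding's volume convergence to see that the volume ratio about the apex is constant in $R$; and then invoke the ``volume cone implies metric cone'' almost-rigidity theorem. So your proposal is correct and matches the argument in the cited source, though there is nothing in the present paper to compare it against.
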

\begin{definition}\label{def1}
A point $y\in M_\infty$ is called $k$-weakly Euclidean, if some tangent cone splits off $\mathbb{R}^k$ isometrically. Let $\mathcal{W}\mathcal{E}_k$ denote the $k$-weakly Euclidean points. We also call $\mathcal{W}\mathcal{E}_n$ the set of regular points, denoted by $\mathcal{R}$. For any $\epsilon>0$, let $\mathcal{R}_{\epsilon}$ be the set of points $y\in M_\infty$ such that there exists $\delta>0$ with $d_{GH}(B(y, r), B_{\mathbb{R}^n}(0, r))<\epsilon r$ for all $0<r<\delta$.  Let $\mathring{\mathcal{R}}_\epsilon$ be the interior of $\mathcal{R}_{\epsilon}$.\end{definition}
In \cite{[CC2]}, the following theorem was proved: 
\begin{theorem}\label{thm-1}
The Hausdorff dimension of $M_\infty\backslash\mathcal{W}\mathcal{E}_k$ is at most $k-1$.
\end{theorem}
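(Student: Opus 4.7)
The plan is to prove the stratum bound $\dim_{\mathcal{H}}\bigl(M_\infty \setminus \mathcal{W}\mathcal{E}_k\bigr) \leq k-1$ by combining a quantitative non-splitting alternative at each point with a covering estimate that exploits $\mathbb{R}^k$-packing at small scales. Write $\mathcal{S}^{k-1} := M_\infty \setminus \mathcal{W}\mathcal{E}_k$. First I would upgrade the qualitative hypothesis $y \notin \mathcal{W}\mathcal{E}_k$ to a scale-invariant effective form: I claim there exist $\epsilon(y) > 0$ and $r_0(y) > 0$ such that
\begin{equation*}
d_{GH}\bigl( B(y, r),\; B_{(0, z)}(r) \subset \mathbb{R}^k \times Z \bigr) \geq \epsilon(y)\, r
\end{equation*}
for every $0 < r < r_0(y)$ and every pointed metric space $(Z, z)$. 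Indeed, failure would produce, via Gromov precompactness and a diagonal argument, a tangent cone whose unit ball is Gromov-Hausdorff close to a product $\mathbb{R}^k \times Z_\infty$; together with Theorem \ref{thm0} and the standard splitting theorem for non-negatively curved metric cones (a cone contains a line iff it splits), this forces the whole tangent cone to split off an isometric $\mathbb{R}^k$, contradicting $y \notin \mathcal{W}\mathcal{E}_k$. Partitioning $\mathcal{S}^{k-1} = \bigcup_{j, m \geq 1} T_{j, m}$ by discretizing $\epsilon(y) \geq 1/j$ and $r_0(y) \geq 1/m$, it suffices by countable stability of Hausdorff dimension to show $\dim_{\mathcal{H}}(T_{j, m}) \leq k-1$ for each fixed $j, m$.

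Next I would apply the quantitative almost-splitting theorem of Cheeger--Colding: for every $\epsilon > 0$ there exists $\delta(n, \epsilon) > 0$ such that any unit ball in a manifold with $\mathrm{Ric} \geq -(n-1)\delta$ that is $\delta$-Gromov-Hausdorff close to the unit ball of some $\mathbb{R}^k \times Z$ must $\epsilon$-split on its concentric half-ball. Applied contrapositively at points of $T_{j, m}$ at scale $r < 1/m$, this prohibits the existence of $k$ nearly-orthogonal approximate $\mathbb{R}$-directions inside $B(y, r)$, since otherwise $B(y, r/2)$ would $(1/(2j))$-split, contradicting membership in $T_{j, m}$. A standard packing argument converts this into the covering bound: $T_{j, m} \cap B(y, r)$ is covered by at most $C(n, j)\, \eta^{-(k-1)}$ balls of radius $\eta r$ for all $\eta < \eta_0(n, j)$, because at every scale a definite codimension-one direction must be missed. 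Iterating this across dyadic scales yields $\mathcal{H}^{k-1+\sigma}(T_{j, m}) = 0$ for every $\sigma > 0$, which completes the argument.

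The main obstacle is the quantitative almost-splitting step: Theorem \ref{thm0} is only a subsequential, non-effective statement, whereas the covering argument needs a single-scale $\delta \Rightarrow \epsilon$ implication. Bridging this gap requires the full Cheeger--Colding machinery --- the Abresch--Gromoll excess estimate, Colding's volume continuity, and an almost-isometry constructed from $k$ independent approximate Busemann functions --- and it is at this step that the Ricci lower bound does all the real analytic work, via sharp gradient and Hessian estimates for harmonic approximations. A secondary technical point is verifying that the packing constant $C(n, j)$ indeed scales like $\eta^{-(k-1)}$ (and not, say, $\eta^{-k}$ with a multiplicative correction): this must be extracted from the statement that missing one approximate direction drops the effective Euclidean dimension by a full integer, which is itself a consequence of the rigidity half of Cheeger--Colding applied iteratively.
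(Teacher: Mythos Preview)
The paper does not supply its own proof of this statement: Theorem~\ref{thm-1} is quoted in Section~2 as a preliminary result from Cheeger--Colding \cite{[CC2]}, with no argument given. There is therefore nothing in the paper to compare your proposal against beyond that citation.

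For what it is worth, your outline is a faithful sketch of the Cheeger--Colding dimension-reduction argument as it runs in \cite{[CC2]}: quantify the failure to split $\mathbb{R}^k$ at each point of $\mathcal{S}^{k-1}$, decompose into countably many pieces $T_{j,m}$ with uniform parameters, and invoke the almost-splitting theorem to show that at every small scale the bad points lie near a spine of dimension at most $k-1$, which after iteration gives the $C(n,j)\,\eta^{-(k-1)}$ covering bound and hence $\mathcal{H}^{k-1+\sigma}(T_{j,m})=0$. Your identification of the quantitative almost-splitting step as the analytic crux, resting on Abresch--Gromoll, Colding's volume continuity, and harmonic replacement of approximate Busemann functions, is accurate. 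One small sharpening: in your compactness step the limiting tangent cone has unit ball \emph{isometric} to a ball in $\mathbb{R}^k\times Z_\infty$ (the approximation error tends to zero along the subsequence), not merely close, so the cone-splitting conclusion follows directly without an additional rigidity argument.
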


If in addition, $M_i$ are all K\"ahler, then theorem $9.1$ in \cite{[CCT]} states 
\begin{theorem}\label{thm-4}
$\mathcal{W}\mathcal{E}_{2k-1} = \mathcal{W}\mathcal{E}_{2k}.$
\end{theorem}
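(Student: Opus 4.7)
The plan is to establish the nontrivial inclusion $\mathcal{W}\mathcal{E}_{2k-1}\subseteq \mathcal{W}\mathcal{E}_{2k}$; the reverse inclusion is immediate. Fix $y\in\mathcal{W}\mathcal{E}_{2k-1}$ and a tangent cone $(C_y, 0)$ splitting isometrically as $\mathbb{R}^{2k-1}\times Z$, with Euclidean coordinate functions $\{b_j\}_{j=1}^{2k-1}$ on the $\mathbb{R}^{2k-1}$ factor. Pulling these back to the rescaled sequence $(M_i, q_i, r_i^{-1}d_i)\to(C_y,0)$ by standard Cheeger-Colding harmonic approximation produces harmonic functions $b_j^i$ on a fixed ball around $q_i$ that converge uniformly to $b_j$ with
\begin{equation*}
\dashint\bigl||\nabla b_j^i|^2 - 1\bigr| + \dashint |\nabla^2 b_j^i|^2 \longrightarrow 0.
\end{equation*}
Since each $M_i$ is K\"ahler, the pointwise bound $|\partial\bar\partial b_j^i|^2\leq C|\nabla^2 b_j^i|^2$ gives $d(d^c b_j^i)=2i\,\partial\bar\partial b_j^i\to 0$ in $L^2$; in other words the $1$-form $d^c b_j^i:=J_i\,db_j^i$ is \emph{approximately closed}.

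Next I would replace $d^c b_j^i$ by an honest exact form. Solving $\Delta f_j^i = d^*(d^c b_j^i)$ on a slightly smaller ball and setting $\alpha_j^i := d^c b_j^i - df_j^i$ yields a closed, coclosed form whose $L^2$-norm is controlled by $\|d(d^c b_j^i)\|_{L^2}$ plus boundary contributions, both tending to zero thanks to the noncollapsing hypothesis and the scale-invariant Sobolev and elliptic estimates available on the $M_i$. One then verifies that $c_j^i:=f_j^i$ (normalized at $q_i$) inherits the quantitative estimates enjoyed by $b_j^i$: unit $L^2$-gradient, vanishing Hessian in $L^2$, and pairings $\langle\nabla c_j^i,\nabla b_l^i\rangle \approx \langle J_i\nabla b_j^i,\nabla b_l^i\rangle$. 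Passing to a limit along a subsequence produces harmonic functions $c_j$ on $C_y$ with unit gradient and vanishing Hessian whose gradients record the limit of $J_i$ acting on the parallel vector fields $\nabla b_j^i$.

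The remaining step is linear algebra. Let $V:=\operatorname{span}\{\nabla b_1,\ldots,\nabla b_{2k-1}\}$ be the parallel subspace of the tangent space to $C_y$ at $0$. Since each $J_i$ is a pointwise isometry with $J_i^2=-I$, the limiting operator $\tilde J$ on parallel vector fields satisfies $\tilde J^2=-I$ and preserves the metric, so it cannot leave the odd-dimensional $V$ invariant. Hence $\tilde J\nabla b_j\notin V$ for some $j$, and the corresponding $\nabla c_j$ supplies a new parallel direction; applying the Cheeger-Colding splitting theorem to the enlarged family yields a splitting $C_y = \mathbb{R}^{2k}\times Z'$, so $y\in\mathcal{W}\mathcal{E}_{2k}$.

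The hard part will be the Hodge step: turning the approximately closed form $d^c b_j^i$ into the differential of a function with quantitative control over gradient and Hessian. The manifolds $M_i$ only enjoy a lower Ricci bound, so one cannot appeal to classical Hodge theory; instead one must work with scale-invariant elliptic estimates on the $M_i$, localize via cutoffs, and carefully absorb boundary terms into the error. Once this quantitative Hodge input is in place, the K\"ahler structure produces the missing splitting direction essentially for free via the odd-versus-even dimension parity.
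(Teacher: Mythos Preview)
First, a contextual note: the paper does not supply its own proof of this statement---it is quoted as Theorem~9.1 of \cite{[CCT]} in the preliminaries---so there is no argument in the paper to compare against. Your overall strategy (produce almost-parallel vector fields $J_i\nabla b_j^i$, convert them to gradients of harmonic functions, then use parity of the complex structure to force an extra $\mathbb{R}$-factor) is exactly the approach of \cite{[CCT]}, and the linear-algebra endgame is correct.

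There is, however, a concrete error in your Hodge step. On any K\"ahler manifold one has $d^*(d^c u)=-\operatorname{div}(J\nabla u)=-\operatorname{tr}(J\,\mathrm{Hess}\,u)=0$ for \emph{every} smooth function $u$, because $J$ is parallel, skew-adjoint, and the Hessian is symmetric. Hence your equation $\Delta f_j^i = d^*(d^c b_j^i)$ is simply $\Delta f_j^i=0$, which carries no information: without a boundary condition there is no mechanism forcing $df_j^i$ to approximate $d^c b_j^i$, and the asserted bound $\|\alpha_j^i\|_{L^2}\lesssim \|d(d^c b_j^i)\|_{L^2}$ does not follow. (Your claim that $\alpha_j^i$ is \emph{closed} is also false as stated; $d\alpha_j^i=dd^c b_j^i$ is small but nonzero.)

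What actually works---and what \cite{[CCT]} does---is to produce the new harmonic function by a different route. One option is to prescribe Neumann data $\partial_\nu f_j^i=(d^c b_j^i)(\nu)$ (solvable since $d^*d^c b_j^i=0$) and then use the Bochner formula for the coclosed, normal-component-free $1$-form $d^c b_j^i-df_j^i$ together with the lower Ricci bound to control $\|d^c b_j^i-df_j^i\|_{L^2}$ by $\|dd^c b_j^i\|_{L^2}$ plus boundary terms. Another is to bypass Hodge theory entirely: define $c_j^i$ by integrating $d^c b_j^i$ along geodesics from a basepoint and use the segment inequality to show the path-dependence is controlled by $\dashint|dd^c b_j^i|$, then replace $c_j^i$ by its harmonic Dirichlet extension. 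Either way the crucial analytic input is the smallness of $\dashint|\nabla^2 b_j^i|^2$ (hence of $\dashint|\partial\bar\partial b_j^i|^2$), not an abstract Hodge decomposition. You correctly flagged this as the hard part; the point is that the specific PDE you wrote down does not do the job.
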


\medskip

H\"ormander's $L^2$ theory:
\begin{theorem}\label{thm-2}
Let $(X^n, \omega)$ be a connected but not necessarily complete K\"ahler manifold with $Ric\geq -(n+1)\epsilon(\epsilon>0)$. Assume $X$ is Stein. Let $\varphi$ be a $C^\infty$ function on $X$ with $\sqrt{-1}\partial\overline\partial\varphi \geq c\omega$ for some positive function $c>(n+1)\epsilon$ on $X$. Let $g$ be a smooth $(0, 1)$ form satisfying $\overline\partial g = 0$ and $\int_X\frac{|g|^2}{c-(n+1)\epsilon}e^{-\varphi}\omega^n<+\infty$, then there exists a smooth function $f$ on $X$ with $\overline\partial f = g$ and $\int_X |f|^2e^{-\varphi}\omega^n\leq \int_X\frac{|g|^2}{c-(n+1)\epsilon}e^{-\varphi}\omega^n$.\end{theorem}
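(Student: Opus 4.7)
The statement is a weighted Hörmander $L^2$ estimate for $\bar\partial$ acting on $(0,1)$-forms on a Stein Kähler manifold; the net curvature $c-(n+1)\epsilon>0$ reflects the strict plurisubharmonicity of $\varphi$ overcoming the Ricci lower bound $-(n+1)\epsilon$, which enters through the Chern curvature of $K_X^{-1}$ dual to $(0,1)$-forms. I would prove it in two steps: first on complete Kähler manifolds via Bochner-Kodaira-Nakano and Hahn-Banach, then reduce the general Stein case by Demailly's exhaustion.

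\emph{Complete case.} Suppose first that $(X,\omega)$ is complete. For $u\in C_c^\infty(X,\Lambda^{0,1})$ the weighted Bochner-Kodaira-Nakano formula gives
$$\|\bar\partial u\|_\varphi^2+\|\bar\partial^*_\varphi u\|_\varphi^2\;\geq\;\int_X (R_{i\bar j}+\varphi_{i\bar j})\,u^{\bar i}\overline{u^{\bar j}}\,e^{-\varphi}\omega^n,$$
which under the hypotheses is pointwise bounded below by $(c-(n+1)\epsilon)|u|^2$. By completeness of $\omega$, $C_c^\infty$ is dense in $\mathrm{Dom}(\bar\partial)\cap\mathrm{Dom}(\bar\partial^*_\varphi)$ in the graph norm, so the coercive estimate persists on the full domain. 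Given a $\bar\partial$-closed $g$, the linear functional $T(\bar\partial^*_\varphi u):=\langle u,g\rangle_\varphi$ on $\bar\partial^*_\varphi\bigl(\ker\bar\partial\cap\mathrm{Dom}(\bar\partial^*_\varphi)\bigr)$ satisfies
$$|T(\bar\partial^*_\varphi u)|\;\leq\;\|\bar\partial^*_\varphi u\|_\varphi\cdot\Bigl(\int_X \tfrac{|g|^2}{c-(n+1)\epsilon}\,e^{-\varphi}\omega^n\Bigr)^{1/2}$$
by Cauchy-Schwarz. Hahn-Banach extension produces $f\in L^2_\varphi$ representing $T$, hence $\bar\partial f=g$ weakly with the stated $L^2$ bound, and interior elliptic regularity of $\bar\partial$ with smooth right-hand side upgrades $f$ to $C^\infty(X)$.

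\emph{Stein reduction.} For the general non-complete Stein case, let $\rho$ be a smooth strictly plurisubharmonic exhaustion. For each $k$, the sublevel set $X_k=\{\rho<k\}$ carries a complete Kähler metric of the form $\omega+\sqrt{-1}\partial\bar\partial\chi_k(\rho)$ with $\chi_k$ convex increasing and blowing up at $\partial X_k$. With a corresponding perturbation of the weight chosen so that both $\sqrt{-1}\partial\bar\partial\varphi\geq (c-\delta_k)\omega$ and the Ricci lower bound $-(n+1)\epsilon-\delta_k$ survive on $X_k$ with $\delta_k\to 0$, the complete-case argument applied on each $X_k$ yields smooth solutions $f_k$ to $\bar\partial f_k=g$ whose weighted $L^2$ norms are uniformly bounded by the right-hand side of the desired estimate. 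A diagonal weak-compactness extraction in $L^2_{\mathrm{loc}}(X)$ then produces a limit $f$ with $\bar\partial f=g$ and the stated global bound; elliptic regularity gives smoothness.

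\emph{Main obstacle.} The delicate technical point is the Stein reduction: the auxiliary complete Kähler metrics and the modified weights must be arranged so that both the strict plurisubharmonicity of $\varphi$ with constant $c$ and the Ricci lower bound $-(n+1)\epsilon$ are preserved in the limit with sharp constants, so that the global estimate is not degraded. This is Demailly's standard approximation scheme from \cite{[D]}; combined with the complete-case argument in the spirit of \cite{[Ho]}, it yields the theorem as stated.
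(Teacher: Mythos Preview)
Your proposal is correct and follows exactly the approach the paper indicates: the paper does not prove this theorem but simply cites \cite{[D]}, pages 38--39, and Lemma 4.4.1 in \cite{[Ho]}, which is precisely the Bochner--Kodaira--Nakano plus functional-analytic argument, together with Demailly's Stein/complete-K\"ahler approximation, that you have sketched. There is nothing to add.
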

The proof can be found in \cite{[D]}, page 38-39. Also compare with lemma 4.4.1 in \cite{[Ho]}. Note that the theorem also applies to singular metrics with positive curvature in the current sense.

\medskip
Three circle theorem in \cite{[L1]}:
\begin{theorem}\label{thm-3}
Let $M$ be a complete noncompact K\"ahler manifold with holomorphic sectional curvature $H\geq -1$, $p\in M$. Let $f$ be a holomorphic function on $M$. Let $M(r) = \sup\limits_{B(p, r)}|f(x)|$. Then $\log M(r)$ is a convex function of $\log\frac{e^r-1}{e^r+1}$. 
\end{theorem}

\section{A maximum principle for heat flow}
In this section we extend Ni-Tam's maximum principle \cite{[NT1]} to the negatively curved case.
 The proposition below is a modification of corollary $1.1$ in \cite{[NT1]}. 
 \begin{prop}\label{prop1}
Let $(M^n, p)$ be a complete noncompact K\"ahler manifold with $BK\geq -1$. Let $r(x) = d(x, p)$. Let $u$ be a nonnegative function on $M$ satisfying $u(x) \leq \exp(a+br(x))$ for some constants $a, b>0$. Let \begin{equation}\label{19}v(x, t) = \int_MH(x, y, t)u(y)dy.\end{equation} $H$ is the heat kernel on $M$. Then given any $1>\delta>0, T>0$, there exist $C_1>0, C_2>0$ depending only on $n, \delta, a, b, T$  such that for any $x\in M$ with $r=r(x)> C_2$, \begin{equation}\label{20}\frac{1}{2}\inf_{B(x, \delta r)}u\leq v(x, t)\leq C_1+\sup_{B(x, \delta r)}u\end{equation} for $0\leq t\leq T$.  The latter inequality holds for all $r$.
\end{prop}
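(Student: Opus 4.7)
The plan is to control $v(x,t)$ by splitting the integral defining it according to whether the integration point $y$ lies in $B(x,\delta r)$ or outside, and then bounding the tail via Gaussian decay of the heat kernel, combined with Bishop--Gromov volume comparison to beat the exponential volume growth that appears in the negatively curved setting. Tracing $BK\geq -1$ gives $R_{i\bar j}\geq -(n+1)g_{i\bar j}$, so standard results yield the Li--Yau type heat kernel bound
\begin{equation*}
H(x,y,t) \leq \frac{C_n}{V(x,\sqrt{t})} \exp\Bigl(-\frac{d(x,y)^2}{5t} + C_n t\Bigr),
\end{equation*}
stochastic completeness $\int_M H(x,y,t)\,dy=1$, and a volume comparison of the form $V(x,s)\leq V(x,\sqrt t)\, e^{C_n(s+1)}$ for $s\geq \sqrt t$.

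For the upper bound, using $\int_{B(x,\delta r)} H\,dy\leq 1$, one has
\begin{equation*}
v(x,t) \;\leq\; \sup_{B(x,\delta r)}u \;+\; \int_{M\setminus B(x,\delta r)} H(x,y,t)\,u(y)\,dy.
\end{equation*}
I would then distinguish two cases. If $r\geq 1$, then for $y$ with $d(x,y)\geq \delta r$ one has $r(y)\leq r(x)+d(x,y)\leq (1+1/\delta)\,d(x,y)$, so $u(y)\leq e^{a+b(1+1/\delta)d(x,y)}$; integrating in spherical shells and applying the kernel and volume bounds reduces the tail to $C\int_{\delta r}^\infty e^{-s^2/(5t)+C's}\,ds$, which is at most $1$ for $r$ large, since the Gaussian factor dominates every exponential in $s$. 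If instead $r<1$, then $r(y)\leq 1+d(x,y)$ everywhere and the same Gaussian-versus-exponential estimate bounds the entire integral $v(x,t)$ by a constant depending only on $n,\delta,a,b,T$. Combining the two cases produces an additive constant $C_1$ valid for all $r$.

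For the lower bound, use stochastic completeness to write
\begin{equation*}
v(x,t)\;\geq\; \inf_{B(x,\delta r)} u \cdot \int_{B(x,\delta r)}H(x,y,t)\,dy \;=\; \inf_{B(x,\delta r)}u\cdot\Bigl(1-\int_{M\setminus B(x,\delta r)}H(x,y,t)\,dy\Bigr).
\end{equation*}
The tail estimate from the previous paragraph, applied to the constant function $1$ in place of $u$, shows the outside integral is at most $C_n e^{-c_n(\delta r)^2/T}$ up to a polynomial correction, hence at most $1/2$ once $r>C_2$ for some $C_2=C_2(n,\delta,T)$. This yields the factor $1/2$ on the infimum.

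The main technical point, and the place where the argument genuinely differs from the nonnegatively curved Ni--Tam setting, is that both the exponential volume growth from $Ric\geq -(n+1)$ and the prescribed exponential growth $e^{a+br}$ of $u$ must be absorbed into the Gaussian factor $e^{-d^2/(5t)}$. For fixed $T$ this is straightforward because the Gaussian eventually dominates any exponential in $s$, and it explains why the constants $C_1, C_2$ are allowed to depend on $T$, $a$, $b$ as well as on $n$ and $\delta$.
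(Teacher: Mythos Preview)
Your proposal is correct and follows essentially the same strategy as the paper: split the heat integral into $B(x,\delta r)$ and its complement, bound the tail via the Li--Yau Gaussian heat-kernel estimate together with Bishop--Gromov volume comparison (so the Gaussian factor absorbs both the exponential volume growth and the exponential growth of $u$), and invoke stochastic completeness for the lower bound. The only cosmetic difference is that the paper uses the symmetric two-center form of the Li--Yau bound and compares volumes back to $vol(B(p,1))$, whereas you use the one-center form with volume comparison based at $x$; your choice is marginally cleaner here since it avoids an implicit dependence on $vol(B(p,1))$ that the proposition statement does not assume.
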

\begin{remark}
The theorem also holds for compact manifolds. 
\end{remark}
\begin{proof}
Let $v = vol(B(p, 1))$.
Recall the heat kernel estimate \cite{[LY]}, there exists $C(n)>0$ with
\begin{equation}\label{21}H(x, y, t)\leq C(n)\frac{1}{\sqrt{vol(B(x, \sqrt{t}))vol(B(y, \sqrt{t}))}}\exp(-\frac{d^2(x, y)}{8t}+C(n)t).
\end{equation}

By volume comparison, \begin{equation}\label{22}vol(B(x, \sqrt{t}))\geq \frac{1}{C(n)}\exp(-8nr(x))v\min(t^{n}, 1),\end{equation}  \begin{equation}\label{23}vol(B(y, \sqrt{t}))\geq \frac{1}{C(n)}\exp(-8n(r(x)+d(x, y))v\min(t^{n}, 1), \end{equation}
\begin{equation}\label{24}\begin{aligned}\int_{M\backslash B(x, \delta r(x))}H(x, y, t)dy &\leq \frac{C(n)}{v\min(1, t^{n})}\int_{M\backslash B(x, \delta r(x))} \exp(8n(r(x)+d(x, y))-\frac{d^2(x, y)}{8t}+C(n)t)dy\\&\leq\frac{C(n, T)}{\min(1, t^{n})}\exp(80nr(x))\int_{\delta r(x)}^\infty\exp(16n\lambda-\frac{\lambda^2}{8t})d\lambda\\&\leq \frac{1}{2}\end{aligned}\end{equation} for $r(x)\geq C_2(n, T, \delta)$. 
As $u$ is of exponential growth, by (\ref{21}), we find that 
\begin{equation}\label{25}\int_{M\backslash B(x, \delta r(x))}H(x, y, t)u(y)dy\leq C_1(n, T, \delta, a, b).\end{equation}
Now \begin{equation}\label{26}\begin{aligned}v(x, t) &= \int_{B(x, \delta r(x))}H(x, y, t)u(y)dy + \int_{M\backslash B(x, \delta r(x))}H(x, y, t)u(y)dy\\&\leq \sup\limits_{B(x, \delta r)}u+C_1; \end{aligned}\end{equation}
\begin{equation}\label{27}\begin{aligned}v(x, t) &= \int_{B(x, \delta r(x))}H(x, y, t)u(y)dy + \int_{M\backslash B(x, \delta r(x))}H(x, y, t)u(y)dy\\&\geq (\inf\limits_{B(x, \delta r)}u)\int_{B(x, \delta r)}H(x, y, t)dy\\& \geq(1-\int_{M\backslash B(x, \delta r)}H(x, y, t)dy)\inf\limits_{B(x, \delta r)}u\\&\geq \frac{1}{2}\inf\limits_{B(x, \delta r)}u. \end{aligned}\end{equation}

\end{proof}

\begin{theorem}\label{thm2}
Let $(M^n, p)$ be a complete K\"ahler manifold. Let $r(x) = d(x, p)$. Assume the bisectional curvature is bounded from below by $-\epsilon$ for some $1>\epsilon>0$. Let $u$ be a smooth function on $M$ with compact support.  Let \begin{equation}\label{0}v(x, t) =\int_MH(x, y, t)u(y)dy.\end{equation} Here $H(x, y, t)$ is the heat kernel of $M$. Let $\eta(x, t)_{\alpha\overline\beta} = v_{\alpha\overline\beta}$ and $\lambda(x)$ be the minimal eigenvalue for $\eta(x, 0)-\epsilon|\nabla u(x)|^2g_{\alpha\overline\beta}$. Let \begin{equation}\label{1}\lambda(x, t) = \exp(8n\epsilon t)\int_M H(x, y, t)\lambda(y)dy.\end{equation} Then $\eta(x, t)-\lambda(x, t)g_{\alpha\overline{\beta}}-\epsilon|\nabla v(x, t)|^2g_{\alpha\overline{\beta}}+Ktg_{\alpha\overline{\beta}}$ is a nonnegative $(1, 1)$ tensor for $t\in [0, T]$, provided the following conditions are satisfied:
\begin{equation}\label{2}
8n\epsilon T<\frac{1}{2};
\end{equation}
\begin{equation}\label{3}
\frac{1}{2}K>8n\epsilon^2\exp(8n\epsilon T)\sup|\nabla u(x)|^2+8n^2\epsilon.
\end{equation}
\end{theorem}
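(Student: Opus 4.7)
The plan is to apply a parabolic tensor maximum principle to the Hermitian $(1,1)$-tensor
$$S_{\alpha\bar\beta}(x,t) := \eta_{\alpha\bar\beta}(x,t) - \bigl[\lambda(x,t) + \epsilon|\nabla v(x,t)|^2 - Kt\bigr]g_{\alpha\bar\beta}$$
and propagate $S_{\alpha\bar\beta}\geq 0$ from $t=0$ to $t\in[0,T]$. The initial inequality is automatic: since $\lambda(x,0)=\lambda(x)$ is the smallest eigenvalue of $\eta(x,0)-\epsilon|\nabla u|^2 g_{\alpha\bar\beta}$, we have $S_{\alpha\bar\beta}(x,0)\geq 0$ pointwise.

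First I would compute $(\partial_t-\Delta)$ of each ingredient. Because $\partial_t v=\Delta v$ on a K\"ahler manifold, the Bochner identity for the complex Hessian yields
$$(\partial_t-\Delta)v_{\alpha\bar\beta} = -2R_{\alpha\bar\beta\gamma\bar\delta}v_{\delta\bar\gamma}+R_{\alpha\bar\gamma}v_{\gamma\bar\beta}+R_{\gamma\bar\beta}v_{\alpha\bar\gamma},$$
and the Bochner identity for the squared gradient gives
$$(\partial_t-\Delta)|\nabla v|^2 = -2|v_{\alpha\beta}|^2-2|v_{\alpha\bar\beta}|^2+2R_{\alpha\bar\beta}v_\beta v_{\bar\alpha}.$$
The exponential prefactor $e^{8n\epsilon t}$ in the definition of $\lambda(x,t)$ is tailored precisely so that $(\partial_t-\Delta)\lambda(x,t)=8n\epsilon\,\lambda(x,t)$, while the summand $Kt$ contributes $+K$ to $\partial_t$. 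I would then invoke Hamilton's tensor maximum principle: at any hypothetical first failure time $t_0\in(0,T]$ and point $x_0$ with null eigendirection $e_1$ of $S$ in a unitary diagonalizing frame, it suffices to show $(\partial_t-\Delta)S(e_1,\bar e_1)>0$. Using $BK\geq -\epsilon$ one has $R_{1\bar 1\gamma\bar\gamma}\geq -\epsilon(1+\delta_{1\gamma})$, so $-2R_{1\bar 1\gamma\bar\delta}v_{\delta\bar\gamma}$ is bounded below by $-2\epsilon(\mathrm{tr}_\omega\eta+v_{1\bar 1})$; the Ricci-type pieces $R_{1\bar\gamma}v_{\gamma\bar 1}+R_{\gamma\bar 1}v_{1\bar\gamma}$ are absorbed via $R_{i\bar j}\geq -(n+1)\epsilon g_{i\bar j}$; and the cross term $2R_{\alpha\bar\beta}v_\beta v_{\bar\alpha}$ is swallowed by the $-\epsilon|\nabla v|^2 g_{\alpha\bar\beta}$ summand of $S$---this absorption is the purpose of the $\epsilon|\nabla u|^2$ correction in the definition of $\lambda$. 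After substitution, using the null-direction relation $\eta_{1\bar 1}=\lambda(x_0,t_0)+\epsilon|\nabla v|^2-Kt_0$ together with $\eta\geq[\lambda(x,t)+\epsilon|\nabla v|^2-Kt]g$, the residue is at least
$$K - 8n\epsilon^2 e^{8n\epsilon t_0}\sup|\nabla u|^2 - 8n^2\epsilon,$$
which is strictly positive by \eqref{3}; condition \eqref{2} keeps the $\lambda$-dependent cross terms bounded through the exponential factor.

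The main obstacle is the noncompactness of $M$, so Hamilton's tensor maximum principle requires justification. Since $u$ has compact support, Proposition \ref{prop1} together with the heat kernel estimate \eqref{21} provides Gaussian decay for $v$, $|\nabla v|$, $|v_{\alpha\bar\beta}|$, and hence for $\lambda(x,t)$, at spatial infinity on $[0,T]$. I would localize by adding a barrier $\delta(1+r^2)g_{\alpha\bar\beta}$ to $S$, apply the maximum principle in a large compact region where the infimum of the smallest eigenvalue is attained, and let $\delta\to 0$; alternatively, an Omori--Yau-type maximum principle valid under $BK\geq -1$ applies. Combined with the strict positivity of $(\partial_t-\Delta)S(e_1,\bar e_1)$ established above, this rules out any first failure time and finishes the proof, following the scheme of \cite{[NT1]} but with every curvature quantity now admitting $-\epsilon$ (rather than $0$) lower bounds that are exactly compensated by the new correction terms $\epsilon|\nabla v|^2 g_{\alpha\bar\beta}$ and $Ktg_{\alpha\bar\beta}$.
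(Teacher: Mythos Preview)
Your overall architecture (tensor maximum principle applied to $S_{\alpha\bar\beta}$, barrier to handle noncompactness) matches the paper, but the crucial curvature estimate has a genuine gap. You propose to bound the bisectional term $-2R_{1\bar 1\gamma\bar\delta}v_{\delta\bar\gamma}$ and the Ricci term $R_{1\bar\gamma}v_{\gamma\bar 1}+R_{\gamma\bar 1}v_{1\bar\gamma}$ \emph{separately}, using $R_{1\bar 1\gamma\bar\gamma}\geq -\epsilon(1+\delta_{1\gamma})$ and $R_{i\bar j}\geq -(n+1)\epsilon g_{i\bar j}$. This fails: there is no \emph{upper} bound on curvature, so if some $\lambda_\gamma=v_{\gamma\bar\gamma}>0$ and $R_{1\bar 1\gamma\bar\gamma}$ is large positive, then $-2R_{1\bar 1\gamma\bar\gamma}\lambda_\gamma$ is arbitrarily negative; likewise the Ricci piece is uncontrolled. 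The paper (following Ni--Tam) avoids this by first \emph{combining} the two curvature terms. In a diagonalizing frame at the null point,
\[
(\partial_t-\Delta)\eta_{\gamma\bar\gamma}
=2\sum_\alpha R_{\gamma\bar\gamma\alpha\bar\alpha}\lambda_\alpha-2R_{\gamma\bar\gamma}\lambda_\gamma
=2\sum_\alpha R_{\gamma\bar\gamma\alpha\bar\alpha}(\lambda_\alpha-\lambda_\gamma)
=2\sum_\alpha R_{\gamma\bar\gamma\alpha\bar\alpha}\bigl(\tilde\eta_{\alpha\bar\alpha}-\tilde\eta_{\gamma\bar\gamma}\bigr),
\]
and since $\tilde\eta_{\gamma\bar\gamma}=0$ and $\tilde\eta_{\alpha\bar\alpha}\geq 0$ at the first failure point, each factor $(\lambda_\alpha-\lambda_\gamma)$ is nonnegative, so the lower bound $R_{\gamma\bar\gamma\alpha\bar\alpha}\geq -\epsilon$ alone suffices. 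This cancellation is the heart of the argument and is missing from your sketch. It also produces the cross term $-8n\epsilon Kt$ (coming from $-2\epsilon\sum_\alpha\tilde\eta_{\alpha\bar\alpha}$), which is precisely why condition (\ref{2}) is needed to retain $(1-8n\epsilon t)K>\frac{1}{2}K$; your explanation of (\ref{2}) as controlling ``$\lambda$-dependent cross terms'' is not the actual mechanism.

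Your treatment of noncompactness is also weaker than what the paper does. The claim of Gaussian decay for $|v_{\alpha\bar\beta}|$ is not established; the paper only proves \emph{boundedness} of $|\nabla v|$ and $\|\eta\|$ (equations (\ref{14}) and Lemma~\ref{lm2}), each via a separate maximum principle applied to scalar subsolutions, using the integral bounds (\ref{5})--(\ref{6}). With only boundedness at infinity, the quadratic barrier $\delta(1+r^2)$ is inadequate: $(\partial_t-\Delta)r^2$ is not bounded below under $Ric\geq -(n+1)\epsilon$. The paper instead uses the heat-evolved exponential barrier $\phi(x,t)=e^{8n\epsilon t}\int_M H(x,y,t)e^{r(y)}\,dy$, which satisfies exactly $(\partial_t-\Delta)\phi=8n\epsilon\phi$ and, by Proposition~\ref{prop1}, grows like $ce^{c_1 r}$; this dominates the bounded tensor pieces on $\partial B(p,R)$ and contributes a term with the \emph{right sign} in the final inequality (\ref{36}).
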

\begin{remark}
We shall prove the theorem for the case when $M$ is noncompact. The proof for the compact case is even simpler.
\end{remark}
\begin{proof}
During the proof, $C, C_i(i\geq 1)$ will be large positive constants. The dependence will be clear from the context.
Following \cite{[NT1]}, we establish some bounds for $v$ and its derivatives.
\begin{lemma}\label{lm1}
\begin{equation}\label{4}(\frac{\partial}{\partial t}-\Delta)\eta_{\gamma\overline\delta} = 2R_{\beta\overline\alpha\gamma\overline\delta}\eta_{\alpha\overline\beta}-(R_{\gamma\overline{p}}\eta_{p\overline\delta}+R_{p\overline\delta}\eta_{\gamma\overline{p}}).
\end{equation}
For any $a>0$,
\begin{equation}\label{5}
\lim\limits_{r\to\infty}\inf\int_0^T\int_{B(p, r)}|\nabla v(x, t)|^2\exp(-ar^2(x))dxdt<\infty,
\end{equation}
\begin{equation}\label{6}
\lim\limits_{r\to\infty}\inf\int_0^T\int_{B(p, r)}||\eta||^2(x, t)\exp(-ar^2(x))dxdt<\infty.
\end{equation}
\end{lemma}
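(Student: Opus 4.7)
The lemma has two essentially independent parts, which I would treat separately.

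For the evolution identity (4), the plan is a direct Bochner-type calculation. Starting from $v_t=\Delta v$ and differentiating in $\partial_\gamma\partial_{\overline\delta}$ gives $\partial_t\eta_{\gamma\overline\delta}=(\Delta v)_{\gamma\overline\delta}$, so the task is to express the right hand side as $\Delta\eta_{\gamma\overline\delta}=v_{\gamma\overline\delta,\alpha\overline\alpha}+v_{\gamma\overline\delta,\overline\alpha\alpha}$ plus curvature corrections. I would swap $\nabla_\alpha,\nabla_{\overline\alpha}$ past $\nabla_\gamma,\nabla_{\overline\delta}$ using the K\"ahler commutation relations; since $v$ is a scalar function, only $[\nabla_\alpha,\nabla_{\overline\delta}]$ and $[\nabla_{\overline\alpha},\nabla_\gamma]$ acting on first order covariant derivatives of $v$ produce curvature. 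The two outer commutations (one for each half of $\Delta$) contribute the full Riemann contraction $2R_{\beta\overline\alpha\gamma\overline\delta}\eta_{\alpha\overline\beta}$, while contracting against the dummy $\alpha$-index produces the Ricci terms $-R_{\gamma\overline p}\eta_{p\overline\delta}$ and $-R_{p\overline\delta}\eta_{\gamma\overline p}$. This is verbatim the computation already used in Ni--Tam; it does not depend on the sign of the curvature, so the identity transfers without change.

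For the weighted integral bounds (5)--(6), the plan is to obtain pointwise growth bounds on $v,|\nabla v|,\|\eta\|$ and then use Bishop--Gromov volume comparison to absorb them against the Gaussian weight $e^{-ar^2}$. Since $u$ is smooth and compactly supported in some $B(p,R_0)$, the representation $v(x,t)=\int_M H(x,y,t)u(y)dy$ together with the Li--Yau heat kernel upper bound (21) gives, for $r(x)\geq 2R_0$, a Gaussian decay of $v$ in $r(x)$; the only geometric factor is $\mathrm{vol}(B(x,\sqrt t))^{-1/2}$, which by volume comparison (valid since $BK\geq -1$ implies $\mathrm{Ric}\geq -(n+1)$) is at most $\exp(Cr(x))$. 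Thus $v(x,t)\leq C_0\exp(C_1 r(x)-r(x)^2/(32T))$. Differentiating the integral representation once in $x$ and invoking the standard Li--Yau gradient estimate for $H$ gives the same decay for $|\nabla v(x,t)|$, which is more than enough to make (5) finite.

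For the Hessian bound (6), rather than iterating heat kernel differentiation, I would use (4) integrated against a weighted cutoff $\phi=\chi_R^{2}e^{-ar^2}$: multiplying (4) by $\overline{\eta}$ and integrating over $[0,T]\times M$, the Laplacian term integrates by parts against $\phi$ to produce $\int\|\nabla\eta\|^2\phi$ plus boundary errors controlled by $|\nabla\phi|$ and by the already-established bound on $|\nabla v|$, while the curvature terms on the right are linear in $\eta$ with coefficients controlled by $BK\geq -1$. A Gronwall argument in $t$, together with volume comparison to estimate $\mathrm{vol}(B(p,R))$, then bounds $\int_0^T\int_{B(p,R)}\|\eta\|^2 e^{-ar^2}$ uniformly in $R$. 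The main subtlety I anticipate is calibrating the cutoff and the weight parameter $a$ so that the boundary terms generated by integration by parts are absorbed rather than blowing up as $R\to\infty$; this is a standard weighted $L^2$ parabolic calculation and parallels the corresponding step in Ni--Tam.
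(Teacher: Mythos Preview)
Your treatment of (4) is correct and matches the paper's: it is a direct Bochner-type commutation, and the paper simply says ``follows from direct computation.''

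For (5) and (6), your approach is viable but considerably more elaborate than what the paper does. You propose pointwise heat-kernel derivative bounds for $|\nabla v|$ and then a weighted Gronwall argument on $\int\phi\|\eta\|^2$ using (4) itself. The paper instead runs two elementary Caccioppoli estimates with an \emph{unweighted} cutoff $\varphi$ on $B(p,2r)$. First, since $u$ has compact support, $|v|\le C$ everywhere; multiplying $(\Delta-\partial_t)v^2=2|\nabla v|^2$ by $\varphi^2$ and integrating by parts yields $\int_0^T\!\int_{B(p,r)}|\nabla v|^2\le C(T+1)e^{C\epsilon r}$, the exponential coming solely from Bishop--Gromov on $vol(B(p,2r))$. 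Second, the Bochner inequality $(\Delta-\partial_t)|\nabla v|^2\ge 2|\nabla^2v|^2-8n\epsilon|\nabla v|^2$ is treated identically, feeding in the first bound to obtain $\int_0^T\!\int_{B(p,r)}\|\eta\|^2\le C(T+1)e^{C\epsilon r}$. Both bounds grow only exponentially in $r$ and are killed by $e^{-ar^2}$, giving (5)--(6) at once.

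The paper's route sidesteps the two places where your sketch would need tightening: the $t^{-1/2}$ factor in the Li--Yau gradient bound on $H$ near $t=0$ (which you would have to defuse via the spatial Gaussian or a separate argument on $\mathrm{supp}\,u$), and the calibration of the weight in your Gronwall step. On the latter, your claim that the boundary errors are ``controlled by $|\nabla\phi|$ and by the already-established bound on $|\nabla v|$'' is not quite right: integrating $\phi\,\bar\eta\,\Delta\eta$ by parts produces a term $\nabla\eta\cdot\bar\eta\cdot\nabla\phi$, which involves $\eta$ itself, and absorbing the resulting $a^2r^2e^{-ar^2}$ factor back into $\phi$ is the usual delicate point of weighted parabolic estimates. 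Your approach does yield finer information (pointwise spatial decay of $v$ and $\nabla v$), but for the lemma as stated the paper's two-line Caccioppoli argument is shorter and avoids both issues entirely.
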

\begin{proof}
 (\ref{4}) follows from direct computation. 
 As $u$ has compact support, $|u|\leq C$. Then by the definition of $v$, $|v(x, t)|\leq C$ for all $x\in M, t\geq 0$.
Note
 \begin{equation}\label{7}(\Delta-\frac{\partial}{\partial t})v^2 = 2|\nabla v|^2.\end{equation}  We multiply (\ref{7}) by the standard cutoff function $\varphi^2$ supported in $B(p, 2r)$ with $\varphi = 1$ in $B(p, r)$ and $|\nabla\varphi|\leq \frac{5}{r}$. By integration by parts and volume comparison, we find \begin{equation}\label{8}\int_0^T\int_{B(p, r)}|\nabla v|^2\leq C_1(r^{-2}\int_0^{2T}\int_{B(p, 2r)}v^2+\int_{B(p, 2r)}u^2)\leq C_2(T+1)e^{50n\epsilon r}\end{equation} for $r\geq 1$. Then (\ref{5}) follows. 
 For the last equation,  we have 
 \begin{equation}\label{9}(\Delta-\frac{\partial}{\partial t})|\nabla v|^2 =4(|v_{ij}|^2+|v_{i\overline{j}}|^2+R_{i\overline{j}}v_jv_{\overline{i}})\geq 2|\nabla^2 v|^2-8n\epsilon|\nabla v|^2.\end{equation} By integration by parts as before, \begin{equation}\label{10}\int_0^T\int_{B(p, r)}|\nabla^2 v|^2\leq C_3((r^{-2}+8n\epsilon)\int_0^{2T}\int_{B(p, 2r)}|\nabla v|^2+\int_{B(p, 2r)}|\nabla u|^2)\leq C_4(T+1)e^{100n\epsilon r}\end{equation} for $r\geq 1$.
Then (\ref{6}) follows. 
 \end{proof}
 
Note (\ref{9}) implies that  \begin{equation}\label{11}(\Delta-\frac{\partial}{\partial t})(e^{-8n\epsilon t}|\nabla v(x, t)|^2) \geq 2e^{-8n\epsilon t}|\nabla^2 v|^2.\end{equation} 
Combining this with \begin{equation}\label{12}|\nabla|\nabla v||^2\leq |\nabla^2v|^2,\end{equation} we find
\begin{equation}\label{13}(\Delta-\frac{\partial}{\partial t})(e^{-4n\epsilon t}|\nabla v(x, t)|) \geq 0.\end{equation} 

 By the maximum principle in \cite{[LK]} or theorem $1.2$ in \cite{[NT2]}, (\ref{5}) and (\ref{13}), \begin{equation}\label{14}e^{-8n\epsilon t}|\nabla v(x, t)|^2\leq \max|\nabla u|^2.\end{equation} 
 At a point $x\in M$, we can diagonalize $\eta$ so that $\eta_{\alpha\overline\beta} = \lambda_\alpha\delta_{\alpha\beta}$. By direct calculations on page 477 of \cite{[NT1]}, 
\begin{equation}\label{15}\begin{aligned}
(\Delta-\frac{\partial}{\partial t})||\eta||^2& = 2|v_{\alpha\overline\beta s}|^2+2|v_{\alpha\overline\beta\overline s}|^2 +4R_{\alpha\overline p}v_{p\overline\delta}v_{\delta\alpha} -4R_{\alpha\overline\beta q\overline{p}}v_{p\overline{q}}v_{\beta\overline\alpha}\\& = 2|v_{\alpha\overline\beta s}|^2+2|v_{\alpha\overline\beta\overline s}|^2+2R_{\alpha\overline\alpha\beta\overline\beta}(\lambda_\alpha-\lambda_\beta)^2\\&\geq 2|v_{\alpha\overline\beta s}|^2+2|v_{\alpha\overline\beta\overline s}|^2-100\epsilon||\eta||^2\end{aligned}\end{equation} 
This implies that 
\begin{equation}\label{16}
(\Delta-\frac{\partial}{\partial t})(e^{-100\epsilon t}||\eta||^2)\geq (2|v_{\alpha\overline\beta s}|^2+2|v_{\alpha\overline\beta\overline s}|^2)e^{-100\epsilon t}.
\end{equation}
A direct calculation shows
\begin{equation}\label{17}
|\nabla ||\eta|||^2\leq |v_{\alpha\overline\beta s}|^2+|v_{\alpha\overline\beta\overline{s}}|^2.
\end{equation}
Then \begin{equation}\label{18}
(\Delta-\frac{\partial}{\partial t})(e^{-50\epsilon t}||\eta||)\geq 0.
\end{equation}
By (\ref{6}), we proved the following lemma:
  \begin{lemma}\label{lm2}
 $||\eta(y, t)||\leq e^{50\epsilon t}\max\limits_{x\in M}||\eta(x, 0)||$.
 \end{lemma}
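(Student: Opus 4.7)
The plan is to apply precisely the same maximum principle scheme that was just used to bound $|\nabla v|$, now with $||\eta||$ in place of $|\nabla v|$. The two required ingredients are already in hand: the differential inequality (\ref{18}), which says that $f(x,t) := e^{-50\epsilon t}||\eta(x,t)||$ is a subsolution of the heat equation, and the Gaussian-type growth estimate (\ref{6}), which says that $||\eta||^2$ is locally $L^2$ against $\exp(-ar^2)$ for every $a>0$. Together these are exactly the hypotheses of the maximum principle of Li--Karp \cite{[LK]} (or Theorem 1.2 of Ni--Tam \cite{[NT2]}).

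First I would note that since $u$ has compact support, $v(\cdot,0)=u$ and hence $\eta(\cdot,0)=v_{\alpha\overline\beta}(\cdot,0)$ is smooth with compact support, so $M_0:=\max_{x\in M}||\eta(x,0)||$ is finite and $f(\cdot,0)\le M_0$. Next I would handle the only technical nuisance, namely that the norm function $||\eta||=\sqrt{\eta_{\alpha\overline\beta}\,\overline{\eta_{\alpha\overline\beta}}}$ is not smooth on the zero set $\{||\eta||=0\}$: the standard workaround is to replace $||\eta||$ by $||\eta||_\sigma:=\sqrt{||\eta||^2+\sigma}$ for small $\sigma>0$, verify by a direct computation (using (\ref{16}) and (\ref{17})) that $(\Delta-\partial_t)(e^{-50\epsilon t}||\eta||_\sigma)\ge 0$ pointwise, and then send $\sigma\to 0^+$ after applying the maximum principle. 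The growth estimate (\ref{6}) survives this smoothing since $||\eta||_\sigma\le||\eta||+\sqrt{\sigma}$.

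With these preliminaries, I would invoke the Li--Karp/Ni--Tam maximum principle on the complete K\"ahler manifold $M$: every subsolution of the heat equation with the stated $L^2$-Gaussian growth is bounded above at time $t$ by the supremum of its initial data. Applied to $e^{-50\epsilon t}||\eta||_\sigma$, this yields
\[
e^{-50\epsilon t}||\eta(x,t)||_\sigma\le \max_{y\in M}||\eta(y,0)||_\sigma,
\]
and sending $\sigma\to 0^+$ gives the claimed bound $||\eta(x,t)||\le e^{50\epsilon t}M_0$.

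The only place where real care is needed is the Gaussian-growth hypothesis: one must confirm that the specific growth one derives in (\ref{6}) (finite $\liminf$ of $\int_0^T\int_{B(p,r)}||\eta||^2 e^{-ar^2}$ for every $a>0$) matches the form of the growth condition in \cite{[LK]}/\cite{[NT2]}. Since the excerpt has already used this exact combination to control $|\nabla v|^2$ via (\ref{5}) and (\ref{13}), the verification is parallel and the argument goes through verbatim. I do not anticipate any further obstacle.
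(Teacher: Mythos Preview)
Your proposal is correct and follows exactly the paper's approach: combine the subsolution inequality (\ref{18}) with the growth bound (\ref{6}) and invoke the same Li--Karp/Ni--Tam maximum principle that yielded (\ref{14}). Your extra care with the regularization $||\eta||_\sigma$ is a standard detail the paper leaves implicit, but the argument is otherwise identical.
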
 
 
 Let $\phi(x) = \exp(r(x))$. Define \begin{equation}\label{28}\phi(x, t) = e^{8n\epsilon t}\int_MH(x, y, t)\phi(y)dy.\end{equation} Then \begin{equation}\label{29}(\frac{\partial}{\partial t}-\Delta)\phi = 8n\epsilon\phi\end{equation} and \begin{equation}\label{30}\phi(x, t)\geq ce^{c_1r}\end{equation} for $0\leq t\leq T$, by proposition \ref{prop1}. Here $c, c_1$ are positive constants. 
Given any $\tau>0$, consider \begin{equation}\label{31}(\tilde\eta)_{\alpha\overline\beta} = \eta(x, t)+(-\lambda(x, t)-\epsilon|\nabla v(x, t)|^2+Kt+\tau \phi(x, t))g_{\alpha\overline{\beta}}. \end{equation}
At $t=0$, $\tilde\eta> 0$. Also, for $0\leq t\leq T$, if $R$ is sufficiently large, by (\ref{14}), lemma \ref{lm2} and (\ref{30}), we have $\tilde\eta>0$ on $\partial B(p, R)$. Suppose at some $t_0\in [0, T]$, $\tilde\eta(x_0, t_0)<0$ for $x_0\in \overline{B(p, R)}$. Then there exists $0\leq t_1<T$ with $\tilde\eta(x, t)\geq 0$ for $x\in B(p, R)$ and $0\leq t\leq t_1$. Moreover, the minimum eigenvalue of $\tilde\eta(x_1, t_1)$ is zero for some $x_1\in B(p, R)$(note $x_1$ cannot be on the boundary).  Now we apply the maximal principle. Let us assume \begin{equation}\label{32}\tilde\eta(x_1, t_1)_{\gamma\overline{\gamma}}=0\end{equation} for $\gamma\in T^{1,0}_{x_1}M, |\gamma| =1$. We may diagonalize $\tilde\eta$ at $(x_1, t_1)$ and assume $\gamma$ is one of the basis of the holomorphic tangent space. Then at $(x_1, t_1)$,
 \begin{equation}\label{33}(\frac{\partial}{\partial t}-\Delta)\tilde\eta_{\gamma\overline{\gamma}}\leq 0.\end{equation} 
 
 On the other hand, by (\ref{4}), \begin{equation}\label{34}\begin{aligned}(\frac{\partial}{\partial t}-\Delta)\eta_{\gamma\overline{\gamma}} &= 2\sum\limits_{\alpha}R_{\gamma\overline{\gamma}\alpha\overline\alpha}\eta_{\alpha\overline\alpha}-2\sum\limits_{\alpha}R_{\gamma\overline{\gamma}\alpha\overline\alpha}\eta_{\gamma\overline{\gamma}}\\&=2\sum\limits_{\alpha}R_{\gamma\overline{\gamma}\alpha\overline\alpha}(\tilde\eta_{\alpha\overline\alpha}-\tilde\eta_{\gamma\overline\gamma})\\&\geq -2\epsilon\sum\limits_\alpha\tilde\eta_{\alpha\overline\alpha}\\&\geq-8n\epsilon(||\eta||-\lambda+Kt+\tau\phi).\end{aligned}\end{equation}

 Note by (\ref{1}), (\ref{9}) and (\ref{29}), \begin{equation}\label{35}\begin{aligned}&(\frac{\partial}{\partial t}-\Delta)(-\lambda(x, t)-\epsilon|\nabla v(x, t)|^2+Kt+\tau \phi(x, t))g_{\gamma\overline{\gamma}} \\&\geq -8n\epsilon\lambda+8n\epsilon\tau\phi+\epsilon(2||\eta||^2-8n\epsilon|\nabla v|^2)+K.\end{aligned}\end{equation} Hence at $(x_1, t_1)$, \begin{equation}\label{36}\begin{aligned}(\frac{\partial}{\partial t}-\Delta)\tilde\eta_{\gamma\overline{\gamma}}&\geq -8n\epsilon(||\eta||-\lambda+Kt+\tau\phi)+\\&-8n\epsilon\lambda+8n\epsilon\tau\phi+\epsilon(2||\eta||^2-8n\epsilon|\nabla v|^2)+K\\&\geq 2\epsilon(||\eta||-2n)^2-8n^2\epsilon-8n\epsilon^2|\nabla v|^2+(1-8n\epsilon t)K\\&>0,\end{aligned}\end{equation} according to (\ref{14}), (\ref{2}) and (\ref{3}). This contradicts (\ref{33}). The theorem follows if we first let $R\to\infty,$ then $\tau\to 0$.
 
\end{proof}
\begin{cor}\label{cor1}
Under the assumption of theorem \ref{thm2}, $\eta(x, t)_{\alpha\overline\beta}\geq (\lambda(x, t)-Kt)g_{\alpha\overline\beta}$.
\end{cor}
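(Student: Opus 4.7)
The corollary is a direct weakening of Theorem~\ref{thm2}, so my plan is to simply drop the $\epsilon|\nabla v|^2$ term from the stronger conclusion and argue that what remains is still a nonnegative $(1,1)$ tensor inequality.

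More precisely, Theorem~\ref{thm2} asserts that the $(1,1)$ tensor
\[
T(x,t) := \eta(x,t)_{\alpha\overline\beta} - \lambda(x,t) g_{\alpha\overline\beta} - \epsilon|\nabla v(x,t)|^2 g_{\alpha\overline\beta} + Kt\, g_{\alpha\overline\beta}
\]
is nonnegative on $M\times[0,T]$. The tensor claimed in the corollary is
\[
S(x,t) := \eta(x,t)_{\alpha\overline\beta} - \lambda(x,t) g_{\alpha\overline\beta} + Kt\, g_{\alpha\overline\beta},
\]
and the difference is
\[
S(x,t) - T(x,t) = \epsilon|\nabla v(x,t)|^2 g_{\alpha\overline\beta}.
\]
Since $|\nabla v|^2 \geq 0$ and $g_{\alpha\overline\beta}$ is a positive $(1,1)$ tensor, this difference is itself nonnegative, and adding a nonnegative $(1,1)$ tensor to the nonnegative $T(x,t)$ preserves nonnegativity. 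Thus $S(x,t) \geq 0$, which is exactly the statement $\eta(x,t)_{\alpha\overline\beta} \geq (\lambda(x,t) - Kt)g_{\alpha\overline\beta}$.

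There is essentially no obstacle: the corollary is a one-line consequence of the theorem, extracted presumably because in later applications one only needs the weaker, cleaner form without the gradient term. The real work was done in the proof of Theorem~\ref{thm2} itself, where the gradient term on the right-hand side was needed to absorb the Ricci contribution via Bochner's formula in the maximum principle argument at $(x_1,t_1)$; once that stronger inequality is in hand, the corollary follows by throwing the gradient term away.
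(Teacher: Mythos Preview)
Your argument is correct and matches the paper's (implicit) proof: the corollary is stated without proof immediately after Theorem~\ref{thm2}, and your observation that $\epsilon|\nabla v|^2 g_{\alpha\overline\beta}\geq 0$ (since $\epsilon>0$ by hypothesis) is exactly the one-line deduction the author has in mind.
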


\section{Construct good holomorphic coordinates on manifolds}
In this section, we construct good holomorphic coordinates around certain points on a manifold. This will be crucial for that the complex analytic singularity has codimension at least $4$.

Let $0<\gamma\leq 2\pi$.
Let $(X, (0, o)) = (\mathbb{C}^{n-1}, 0)\times (Z, o)$ where $(Z, o)$ is a complex one dimensional cone with cone angle $\alpha$ satisfying $2\pi\geq \alpha\geq \gamma$. The metric on $(Z, o)$ is given by the standard metric $dr^2+r^2d\theta^2(0\leq\theta<\alpha)$.
On $X$, there is a global holomorphic chart $(z_1, ..., z_{n-1}, z_n)$: $z_1, ..., z_{n-1}$ are standard coordinates on $(\mathbb{C}^{n-1}, 0)$, $z_n(r, \theta) = r^\frac{2\pi}{\alpha}e^{\frac{2\pi\theta\sqrt{-1}}{\alpha}}$. It is clear that the coordinate functions are Lipschitz on each compact set of $X$. Let $K_r\subset\mathbb{C}^n$ be the image of $(z_1, ..., z_{n-1}, z_n)$ on $B_X((0, o), r)$. Then \begin{equation}\label{-1000}K_r = \{(z_1, ...., z_n)\in\mathbb{C}^n||z_1|^2+\cdot\cdot\cdot+|z_{n-1}|^2+|z_n|^{\frac{\alpha}{\pi}} \leq r^2\}\end{equation}Below is the main result in this section:
\begin{prop}\label{prop2}
Let $a>0$. There exist $\tilde\epsilon=\tilde\epsilon(n, \gamma)>0, \delta = \delta(n)>0$ so that the following hold. Assume $(M^n, x)$ is a complete K\"ahler manifold with $BK\geq-\frac{\epsilon^3}{a^2}$ for some $0<\epsilon<\tilde\epsilon$ and $d_{GH}(B(x, \frac{a}{\epsilon}), B_{X}((0, o), \frac{a}{\epsilon}))<\epsilon a$, then there exists a holomorphic chart $(w_1,...., w_n)$ containing $B(x, \delta a)$ so that 
\begin{itemize}
\item $w_s(x) = 0 (1\leq s\leq n)$.
\item  Up to an isometry of $(X, (0, o))$, on $B(x, \delta a)$, we have: for $1\leq i\leq n-1$, $w_i$ is $a\Phi(\epsilon|n, \gamma)$ close to $z_i$ under the Gromov-Hausdorff approximation; $w_{n}$ is $a^{\frac{2\pi}{\alpha}}\Phi(\epsilon|n, \gamma)$ close to $z_n$. In particular, on $B(x, \delta a)$, $|w_i|\leq C(n, \gamma)a (1\leq i\leq n-1)$ and $|w_n|\leq C(n, \gamma)a^{\frac{2\pi}{\alpha}}$.
\item The image of $(w_1, ..., w_n)$ contains the domain $K_{(\delta-\Phi(\epsilon|n, \gamma))a}$.\end{itemize} 
\end{prop}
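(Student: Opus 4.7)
After rescaling so that $B(x,a/\epsilon)$ becomes the unit ball, the bisectional curvature satisfies $BK \geq -\epsilon$ and $d_{GH}(B(x,1), B_X((0,o),1)) < \epsilon$, putting us in the regime of Theorem \ref{thm2}. The idea is to construct each $w_j$ as a H\"ormander correction of an approximately holomorphic function built by short-time heat flow from a nearly pluriharmonic pullback of $z_j$, and then verify the image property by a degree argument.

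\textbf{Initial approximations and heat flow.} For $1 \leq i \leq n-1$, apply the Cheeger-Colding almost-splitting theorem to produce harmonic functions $h_{2i-1}, h_{2i}$ on $B(x,3/4)$ that are $\Phi(\epsilon|n,\gamma)$-close in $C^0$ to $\mathrm{Re}\,z_i \circ \Phi$ and $\mathrm{Im}\,z_i \circ \Phi$ with $L^2$-small complex Hessian; set $u_i = h_{2i-1} + \sqrt{-1}\,h_{2i}$. For $i = n$, pull back $z_n$ via the GH approximation and mollify at a scale compatible with the H\"older exponent $2\pi/\alpha$, using that $z_n$ is genuinely holomorphic on $X$ to force $\partial\overline\partial u_n$ to be $L^2$-small. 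Then run heat flow on each $u_j$ for a short time $t = t(\epsilon) \to 0$ to obtain $v_j$. With $K$ and $t$ chosen to satisfy (\ref{2})--(\ref{3}), Corollary \ref{cor1} gives the pointwise bound $\|\partial\overline\partial v_j\| \leq \Phi(\epsilon|n,\gamma)$ on $B(x,1/2)$, while standard heat-kernel estimates keep $v_j$ close to $u_j$ in $C^0$.

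\textbf{H\"ormander correction and image.} On a Stein subdomain $D \supset B(x,\delta a)$ carrying a plurisubharmonic weight $\varphi$ with $\sqrt{-1}\partial\overline\partial\varphi \geq c\omega$ for some $c > (n+1)\epsilon$ --- constructed as a sublevel set of a strictly plurisubharmonic exhaustion built from $\sum |v_j|^2$ plus a cutoff multiple of $d(x,\cdot)^2$ --- apply Theorem \ref{thm-2} to solve $\overline\partial f_j = \overline\partial v_j$ with $\|f_j\|_{L^2(e^{-\varphi})} \leq \Phi(\epsilon|n,\gamma)$. Interior $L^\infty$ estimates convert this into pointwise bounds, and setting $w_j := v_j - f_j - (v_j(x) - f_j(x))$ gives holomorphic functions on $B(x,\delta a)$ with $w_j(x) = 0$, each close to $z_j \circ \Phi$ at the correct scale ($\Phi(\epsilon|n,\gamma)\,a$ for $j<n$, and $\Phi(\epsilon|n,\gamma)\,a^{2\pi/\alpha}$ for $j=n$). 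A degree argument, using that the model map $z: B_X((0,o),\delta) \to K_\delta$ is proper and surjective, then shows that $w(B(x,\delta a))$ contains $K_{(\delta-\Phi(\epsilon|n,\gamma))a}$.

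\textbf{Main obstacle.} The delicate point is the cone coordinate $z_n$: its H\"older scaling forces the mollification, the heat-flow time, and the H\"ormander estimate to be calibrated to $a^{2\pi/\alpha}$ rather than $a$, and the singular vertex of $Z$ precludes a direct appeal to the almost-splitting theorem for this coordinate. A closely related difficulty is verifying the Stein-domain and strictly plurisubharmonic weight prerequisites needed for H\"ormander's estimate, in a setting where the bisectional curvature is only bounded below; this is where the present situation genuinely departs from the bounded-curvature or polarized settings of \cite{[DS]}.
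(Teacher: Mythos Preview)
Your plan has the heat flow in the wrong place, and this is a genuine gap rather than a cosmetic difference. Corollary \ref{cor1} yields a \emph{lower} bound on the complex Hessian $v_{\alpha\overline\beta}$; it is a machine for producing strictly plurisubharmonic functions, not for making $\overline\partial v$ small. Even a two-sided bound on $\partial\overline\partial v_j$ would not control $\|\overline\partial v_j\|_{L^2}$, which is what H\"ormander needs. In the paper the heat flow is run on a cut-off of the Cheeger--Colding function $\rho\approx \tfrac{1}{2}r^2$ (see (\ref{45})--(\ref{48})); the output $h_1$ is strictly plurisubharmonic by Claim \ref{cl2}, and it is $h_1$ that supplies both the weight and the Stein sublevel set $\Omega$ for Theorem \ref{thm-2}. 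Your proposed exhaustion ``$\sum|v_j|^2$ plus a cutoff multiple of $d(x,\cdot)^2$'' does not work: the $v_j$ are not holomorphic, and $d(x,\cdot)^2$ has no pointwise complex-Hessian lower bound under a mere bisectional curvature lower bound. This is exactly the obstacle you flag at the end, and it is not bypassed by your construction.

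The second gap is the $n$-th coordinate. ``Pull back $z_n$ and mollify'' does not explain why the result is almost holomorphic in $L^2$; $z_n$ is only H\"older, and the GH approximation carries no complex structure. The paper handles this by solving the Dirichlet problem on $B(x,2R)$ with boundary data transplanted from $z_n$ (Lemma \ref{lm3}), and then proving $\int|\overline\partial w'_n|^2$ is small by covering $B(x,R)$ away from the singular stratum by small balls on which the model is genuinely Euclidean, comparing $w'_n$ to a linearization of $z_n$ on each, and controlling the remaining thin neighborhood of the singular set by volume (see (\ref{60})--(\ref{69})). Once the $w'_i$ have small $\overline\partial$ in $L^2$ and the psh weight $h_1$ is in hand, H\"ormander gives the holomorphic $w_i$, and the chart property follows from a degree/volume computation (Claims \ref{cl5}--\ref{cl6}). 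Your degree argument for the image is fine in spirit, but it only becomes available after the two issues above are fixed.
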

\begin{proof}
It is clear that the proposition is independent of $a$. We may assume that $a$ is sufficiently large, to be determined. Let $a = 100R$.
Let $r(y)$ be the distance from $y$ to $x$.  We shall assume $\tilde\epsilon$ is sufficiently small. The value will be fixed later.
We first construct the weight function for H\"ormander's $L^2$ estimate. The argument follows from a slight modification of \cite{[L2]}. The completeness, we include most of the details. Set \begin{equation}\label{37}A= B(x, 5R)\backslash B(x, \frac{1}{5R}).\end{equation}  By the volume convergence theorem \cite{[C]} or theorem $5.9$ in \cite{[CC2]}, $A
$ satisfies the almost maximal volume condition (see $(4.8)$ or $(4.10)$ in \cite{[CC1]}).  By Cheeger-Colding theory \cite{[CC1]}((4.43) and (4.82)), there exists a smooth function $\rho$ on $M$ so that 
\begin{equation}\label{38}
\int_{A}|\nabla\rho-\nabla \frac{1}{2}r^2|^2 + |\nabla^2\rho-g|^2<\Phi(\epsilon|R, n, \gamma);
\end{equation}
\begin{equation}\label{39}
|\rho-\frac{r^2}{2}|<\Phi(\epsilon|R, n, \gamma)
 \end{equation} on $A$. 
 Let $F(r)$ be the Green function on $2n$ dimensional real space form with $Ric = -(n+1)\frac{\epsilon^3}{a^2}$. Then $F'(r)<0$. As $\epsilon\to 0$, up to a factor, 
 \begin{equation}\label{40}
 F\to r^{2-2n}, n>1; F\to \log r, n=1.
 \end{equation}
According to ($4.20$)-($4.23$) in \cite{[CC1]}, \begin{equation}\label{41}\rho =  \frac{1}{2}(F^{-1}\mathcal{G})^2; \Delta\mathcal{G}(y) = 0, y\in B(x, 10R)\backslash B(x, \frac{1}{10R});\end{equation}  \begin{equation}\label{42}\mathcal{G} = F(r)\end{equation} on $\partial (B(x, 10R)\backslash B(x, \frac{1}{10R})).$ Now \begin{equation}\label{43}|\nabla\rho(y)| = |F^{-1}\mathcal{G}||(F^{-1})'(\mathcal{G})||\nabla\mathcal{G}(y)|.\end{equation} By (\ref{39})-(\ref{42}) and Cheng-Yau's gradient estimate \cite{[CY]}, \begin{equation}\label{44}|\nabla\rho(y)|\leq C(n)r(y)\end{equation} for $y\in A$ and sufficiently small $\epsilon$ depending only on $n, R, \gamma$. Now consider a smooth function $\overline\varphi$: $\mathbb{R}^+\to \mathbb{R}^+$ given by $\overline\varphi(t) = t$ for $t\geq 2$; $\overline\varphi(t) = 0$ for $0\leq t\leq 1$; $|\overline\varphi|, |\overline\varphi'|, |\overline\varphi''|\leq C(n)$. Let \begin{equation}\label{45}u(y) = \frac{1}{R^2}\overline\varphi(R^2\rho(y)).\end{equation}  We set $u(y) = 0$ for $y\in B(x, \frac{1}{5R})$. Then $u$ is smooth on $B(x, 4R)$.
 \begin{claim}\label{cl1}
$\int_{B(x, 4R)}|\nabla u-\nabla \frac{1}{2}r^2|^2 + |\nabla^2u-g|^2<\Phi(\epsilon|R, n, \gamma); |u-\frac{r^2}{2}|<\Phi(\epsilon|R, n, \gamma)$ and $|\nabla u|\leq C(n)r$ on $B(x, 4R)$.
  \end{claim}
  \begin{proof}
  We have \begin{equation}\label{46}\nabla u(y) = \overline\varphi'(R^2\rho(y))\nabla\rho(y);\end{equation}\begin{equation}\label{47}\nabla^2 u(y) = R^2\overline\varphi''(R^2\rho(y))\nabla\rho\otimes\nabla\rho + \overline\varphi'(R^2\rho(y))\nabla^2\rho.\end{equation}
The proof follows from a routine calculation, by (\ref{38}), (\ref{39}) and (\ref{44}).
  \end{proof}
  
  Now consider a smooth function $\varphi$: $\mathbb{R}^+\to \mathbb{R}^+$ with $\varphi(t) = t$ for $0\leq t\leq 1$; $\varphi(t) = 0$ for $t\geq 2$; $|\varphi|, |\varphi'|, |\varphi''|\leq C(n)$. Let $H(z, y, t)$ be the heat kernel on $M$ and set \begin{equation}\label{48}h(y) = 5R^2\varphi(\frac{u(y)}{5R^2}), h_t(z) = \int_M H(z, y, t)h(y)dy.\end{equation}
\begin{claim}\label{cl2}
Assume $\epsilon$ is sufficiently small, depending only on $R, \gamma, n$. Then 
$(h_1)_{\alpha\overline\beta}(z)\geq c(n, \gamma)g_{\alpha\overline\beta}>0$ on $B(x, \frac{R}{10})$. 
\end{claim}  
\begin{proof}
Let $\lambda(y)$ be the lowest eigenvalue of $h_{\alpha\overline\beta}-\frac{\epsilon^3}{a^2} |\nabla h|^2g_{\alpha\overline\beta}$. Let \begin{equation}\label{49}\lambda(z, t) = e^{8n\frac{\epsilon^3}{a^2}}\int_M H(z, y, t)\lambda(y)dy.\end{equation}  By corollary \ref{cor1},  \begin{equation}\label{50}(h_1)_{\alpha\overline\beta}(z)\geq (\lambda(z, 1)-K)g_{\alpha\overline\beta}, \end{equation} provided the following inequalities are satisfied:
\begin{equation}\label{51}
8n\frac{\epsilon^3}{a^2}<\frac{1}{2},
\end{equation}
\begin{equation}\label{52}
\frac{1}{2}K>8n(\frac{\epsilon^3}{a^2})^2\exp(8n\frac{\epsilon^3}{a^2})\sup|\nabla h|^2+8n^2\frac{\epsilon^3}{a^2}.\end{equation}
From (\ref{48}) and claim \ref{cl1}, it is clear that $|\nabla h|\leq C(n)R$ on $M$. If $\epsilon$ is very small, we can make $K$ small and (\ref{51}), (\ref{52}) hold.  To prove claim \ref{cl2}, it suffices to prove that $\lambda(z, 1)\geq c(n, \gamma)>0$ for $z\in B(x, \frac{R}{10})$. The proof is almost the same as in claim $1$ in \cite{[L2]}. We skip the details here.
\end{proof}
\begin{claim}\label{cl3}
There exist $\epsilon_0=\epsilon_0(n)>0$, $R\geq C_0(n)>100$, $\epsilon = \epsilon(n, R, \gamma)$ sufficiently small so that \begin{equation}\label{eq18}\min\limits_{y\in\partial B(x, \frac{R}{20})}h_1(y)> 4\sup\limits_{y\in B(x, \epsilon_0R)}h_1(y).\end{equation} Also $0\leq h_1(y)\leq C(n, \gamma)R^2$ on $B(x, R)$.
\end{claim}
\begin{proof}
According to $(\ref{39})$, this is a consequence of proposition \ref{prop1}.
\end{proof}

Now we freeze the value $R = C_0(n)$ in claim \ref{cl3}.  Then $\epsilon$ depends only on $n$ and $\gamma$.
We might make $\epsilon$ even smaller later.
Let $\Omega$ be the connected component of $\{y\in B(x, \frac{R}{20})|h_1(y)<2\sup\limits_{y\in B(x, \epsilon_0R)}h_1(y)\}$ containing $B(x, \epsilon_0R)$. Then $\Omega$ is relatively compact in $B(x, \frac{R}{20})$ and $\Omega$ is a Stein manifold by claim \ref{cl2}.

\begin{lemma}\label{lm3}
There exist complex harmonic functions $w'_i(1\leq i\leq n)$ on $B(x, 2R)$ so that the following hold.
\begin{itemize}
\item Up to an isometry of $(X, (0, o))$, on $B(x, 2R)$, we have for $1\leq i\leq n$, $w'_i$ is $\Phi(\epsilon|n, \gamma)$ close to $z_i$ under the Gromov-Hausdorff approximation.
\item $\int_{B(x, R)}|\overline\partial w'_i|^2\leq \Phi(\epsilon|n, \gamma)$ for $1\leq i\leq n$.
\end{itemize}
\end{lemma}
\begin{proof}
First we construct $w'_i$ for $1\leq i\leq n-1$.  The construction is similar to proposition $1$ in \cite{[L2]}. For completeness, we include the details. According to Cheeger-Colding theory \cite{[CC1]}(also equation ($1.23$) in \cite{[CC3]}), 
there exist real harmonic functions $b_1,..., b_{2n-2}$ on $B(x,4R)$ so that 
\begin{equation}\label{56}\dashint_{B(x, 2R)}  \sum\limits_{s}|\nabla(\nabla b_s)|^2 +\sum\limits_{s, l}|\langle\nabla b_s, \nabla b_l\rangle - \delta_{sl}|^2\leq \Phi(\epsilon|n, \gamma)\end{equation} and \begin{equation}\label{57}b_s(x) = 0(1\leq s\leq 2n-2); |\nabla b_s|\leq C(n)\end{equation} on $B(x, 2R)$. Moreover, the map $F(y) = (b_1(y),..., b_{2n-2}(y))$ is a $\Phi(\epsilon|n, \gamma)$ approximation to the Euclidean factor of $X$.
According to the argument above lemma $9.14$ in \cite{[CCT]}(see also $(20)$ in \cite{[L]}), after a suitable orthogonal transformation, we may assume \begin{equation}\label{58}\dashint_{B(x, 2R)} |J\nabla b_{2j-1} - \nabla b_{2j}|^2 \leq \Phi(\epsilon|n, \gamma)\end{equation} for $1\leq j\leq n-1$. Set $w'_j = b_{2j-1}+\sqrt{-1}b_{2j}$. Then \begin{equation}\label{59}\dashint_{B(x, 2R)} |\overline\partial w'_j|^2\leq \Phi(\epsilon|n, \gamma).\end{equation} By composing with an isometry of $(X, (0, o))$, we may assume $w_j'$ is close to $z_j$.

Now we construct the function $w_n'$. It is clear that $z_n$ is Lipschitz on $\partial B((0, o), 2R)$. We can transplant it to $\partial B(x, 2R)$ as a Lipschitz function $h'_n$. Basically we first transplant the values to a $\delta$-net, then extend to a Lipschitz function by Macshane lemma (see, for example, (8.2) in \cite{[Ch1]}). One can also directly apply lemma $10.7$ in \cite{[Ch1]}. We may assume $h'_n$ is very close to $z_n|_{\partial B((0, o), 2R)}$.
Following Ding \cite{[Di2]}, we solve the Dirichlet problem $\Delta w_n' = 0$ with boundary data $w'_n = h'_n$. By using the same arguments as in theorem $2.1$ of \cite{[Di2]}(replace $b_i$ in $(2.3)$ of \cite{[Di2]} by the Green function on the space form with $Ric = -(n+1)\frac{\epsilon^3}{a^2}$), we find that $w'_n$ is close to $z_n$ up to $\Phi(\epsilon|n, \gamma)$ error. 

Next we prove that $w_n'$ is almost holomorphic or anti-holomorphic on $B(x, R)$. More precisely, we prove 
\begin{equation}\label{60}
\int_{B(x, R)}|D w'_n|^2\leq \Phi(\epsilon|n, \gamma)
\end{equation}
where $D = \partial$ or $\overline\partial$. 
We always assume $\epsilon$ is as small as we want. Let $S = \{y\in X|z_n(y) = 0\}$. That is, $S$ is the set of singular points of $X$.
Fix small $\epsilon'>0$. Given any point $y'\in B(x, \frac{3}{2}R)\backslash B(S, \epsilon' R)$($B(S, \epsilon'R)$ is just the distance neighborhood of $S$), we can find $y\in B_X((0, o), \frac{3}{2}R)$ with $y$ close to $y'$ up to distance $\Phi(\epsilon|n, \gamma)$. Since $X$ is flat outside $S$,  there exist $\delta''=\delta''(n, \epsilon', \gamma)$ and
a holomorphic chart $(a_1, ..., a_n)$ in $B(y, 2\delta'' R)$ with $a_i = z_i$ for $1\leq i\leq n-1$ and  the metric is given by $\omega = \frac{\sqrt{-1}}{2}\sum\limits_{i=1}^{n}da_i\wedge\overline{da_i}$. This means that each $a_i$ is a parallel coordinate function. Furthermore, we can require that $a_n$ is a function depending only on $z_n$. Thus we can regard $z_n$ as a function of $a_n$. 

As we mentioned before, if $\epsilon$ is sufficiently small, $B(y, 2\delta''R)$ is close to $B(y', 2\delta''R)$ as we want.
According to Cheeger-Colding theory \cite{[CC1]}, we can find $0<\delta' = \delta'(n, \gamma, \epsilon', \delta'')<<\delta''$ and complex harmonic functions $(a''_1, ..., a''_n)$ on $B(y', 2\delta'R)$ with $a''_i$ close to $a_i$ up to error $\epsilon'\delta'R$. Furthermore, 
\begin{equation}\label{eq0}
\dashint_{B(y', \delta' R)}\sum\limits_{1\leq i, j\leq n}(|\langle da''_i, \overline{da''_j} \rangle-2\delta_{ij}|^2+|\langle da''_i, da''_j \rangle|^2)<\epsilon'; |da''_i|\leq C(n).\end{equation}
By assume $\delta'$ be sufficiently small, we may assume \begin{equation}\label{eq1}|z_n(t) - \frac{\partial z_n}{\partial a_n}(y)a_n(t)-(z_n(y)-\frac{\partial z_n}{\partial a_n}(y)a_n(y))|<\epsilon'\delta'R\end{equation} for any $t\in B(y, 2\delta'R)$. This merely says $z_n$ is almost linear in terms of $a_n$ on $B(y, 2\delta'R)$. 
For notational convenience, we set $\lambda_1(y) = z_n(y)-\frac{\partial z_n}{\partial a_n}(y)a_n(y)$.
Recall the definition of $z_n$ in the second paragraph of this section. Since $z_n$ depends only on $a_n$ and the metric $\omega = \frac{\sqrt{-1}}{2}\sum\limits_{i=1}^nda_i\wedge\overline{da_i}$ on $B(y, 2\delta''R)$,  \begin{equation}\label{-1}|dz_n(y)| = |\frac{\partial z_n}{\partial a_n}(y)da_n| = \sqrt{2}\frac{2\pi}{\alpha}r(y)^{\frac{2\pi}{\alpha}-1}\leq C(n, \gamma).\end{equation} We have used that $R$ depends only on $n, \gamma$.

Let $a'_j(1\leq j\leq n-1)$ be the restriction of $w'_j$ on $B(y', 2\delta'R)$. Let $a'_n=a''_n$. By the sentence below (\ref{59}), we may assume $a'_j$ is close to $z_j(1\leq j\leq n-1)$ up to error $2\epsilon'\delta'R$. Since $a_j = z_j$ for $1\leq j\leq n-1$ on $B(y, \delta''R)$, by the sentence above (\ref{eq0}), we find that on $B(y, \delta'R)$,
\begin{equation}\label{eq2}
|a'_j-a''_j|\leq 10\epsilon'\delta'R
\end{equation}
As $a'_j$ and $a''_j$ are harmonic, gradient estimate says on $B(y', \delta'R)$, \begin{equation}\label{eq3}|da'_j-da''_j|\leq C(n, \gamma)\epsilon'.\end{equation}\begin{claim}\label{claim0}
  \begin{equation}\label{61}
\dashint_{B(y', \delta' R)}\sum\limits_{1\leq i, j\leq n}|\langle da'_i, \overline{da'_j} \rangle-2\delta_{ij}|^2<C(n, \gamma)\epsilon'; \dashint_{B(y', \delta' R)}|\overline\partial a'_j|^2<C(n, \gamma)\epsilon', 1\leq j\leq n-1;\end{equation} \begin{equation}\label{62}\dashint_{B(y', \delta' R)}\sum\limits_{1\leq i, j\leq n}|\langle da'_i, da'_j \rangle|^2<C(n, \gamma)\epsilon';\dashint_{B(y', \delta' R)}|D a'_n|^2<C(n, \gamma)\epsilon'\end{equation} for $D = \partial$ or $\overline\partial$. $D$ can only be one of them, if $\epsilon'$ is sufficiently small. \end{claim}
\begin{proof}
(\ref{61}) and the first inequality in (\ref{62}) follow from (\ref{59}), (\ref{eq3}), (\ref{eq0}). For the last inequality, one can apply the same argument as in (\ref{58}). If (\ref{62}) holds for $D=\partial$ and $D=\overline\partial$, $\dashint_{B(y', \delta' R)}|da'_n|^2\leq C(n, \gamma)\epsilon'$. This contradicts (\ref{61}).\end{proof}

\medskip

Recall the function $\lambda_1(y)$ defined below (\ref{eq1}). Set \begin{equation}\label{-2}\tilde{z}_n(s) = \frac{\partial z_n}{\partial a_n}(y)a'_n(s)+\lambda_1(y)\end{equation} for $s\in B(y', \frac{3}{2}\delta'R)$. Then $\tilde{z_n}$ is harmonic. By (\ref{-1}) and (\ref{62}),
\begin{equation}\label{63}
\dashint_{B(y', \delta' R)}|D \tilde{z}_n|^2<C(n, \gamma)\epsilon'.\end{equation}
\begin{claim}\label{cl4}
$|d\tilde{z}_n-dw'_n|<C(n, \gamma)\epsilon'$ on $B(y', \delta' R)$. Thus\begin{equation}\label{64}
\dashint_{B(y', \delta' R)}|d\tilde{z}_n-dw'_n|^2<C(n, \gamma)\epsilon'.\end{equation}
\end{claim}
\begin{proof}
As $a_n$ is close to $a'_n$ up to error $C(n)\epsilon'\delta'R$, by (\ref{eq1}) and (\ref{-2}), $z_n$ is close to $\tilde{z}_n$ up to error $C(n, \gamma)\epsilon'\delta'R$. By the paragraph above (\ref{60}), $z_n$ is also close to $w'_n$ up to error $\Phi(\epsilon|n, \gamma)$. We can make this as small as we want. Thus we may assume $|\tilde{z}_n - w'_n|\leq C(n, \gamma)\epsilon'\delta'R$ on $B(y', \frac{3}{2}\delta'R)$. Cheng-Yau's gradient estimate implies the desired claim.
\end{proof}

By claim \ref{cl4} and (\ref{63}), we find 
\begin{equation}\label{65}
\dashint_{B(y', \delta' R)}|Dw'_n|^2<C(n, \gamma)\epsilon'.\end{equation}
Let $S'\in B(x, \frac{3}{2}R)$ be the preimage of $S$ under the Gromov-Haudorff approximation. This is rough, but enough for purpose.
If $\epsilon'<<1$, the type of $D$ does not change when $y'$ moves in $B(x, \frac{3}{2}R)\backslash B(S', 2\epsilon' R)$. 
We can consider covering of $\overline{B(x, \frac{5}{4}R)}\backslash B(S', 2\epsilon' R)$ by balls $B(y', \delta'R)$ so that each point belongs only to at most $C(n, \gamma)$ balls.
This implies that 
\begin{equation}\label{66}
\dashint_{B(x, R)\backslash B(S', 2\epsilon' R)}|Dw'_n|^2<C(n, \gamma)\epsilon'.\end{equation}
Gradient estimate says 
$|dw'_n|\leq C(n, \gamma)$ on $B(x, R)$.
The volume convergence theorem \cite{[C]} says \begin{equation}\label{67}Vol(B(S', 2\epsilon'R)\cap B(x, R))\leq Vol(B(S, 3\epsilon'R)\cap B_X((0, o), R))+\Phi(\epsilon|n, \gamma).\end{equation}
Therefore, we have \begin{equation}\label{68}
\int_{B(S', 2\epsilon'R)\cap B(x, R)}|dw'_n|^2\leq C(n, \gamma)\epsilon'^{2}
\end{equation}
(\ref{66}) and (\ref{68}) imply 
\begin{equation}\label{69}
\int_{B(x, R)}|Dw'_n|^2\leq C(n, \gamma)\epsilon'.
\end{equation}
Given any $\epsilon'>0$, we can find small $\epsilon>0$ so that the inequalities above all hold.
By taking the conjugate of $w'_n$ if necessary, 
we conclude the proof of lemma \ref{lm3}.

 \end{proof}

Now we are ready to solve the $\overline\partial$-problem $\overline\partial f_i = \overline\partial w'_i$. By claim \ref{cl2}, claim \ref{cl3}, theorem \ref{thm-2}, lemma \ref{lm3} and the definition of $\Omega$ below claim \ref{cl3}, 
\begin{equation}\label{70}\int_{\Omega}|f_i|^2e^{-h_1}\leq \frac{1}{c(n, \gamma)}\int_{\Omega}|\overline\partial w'_i|^2e^{-h_1}<\Phi(\epsilon|\gamma, n).\end{equation} As $w_i'$ is harmonic, $f_i$ is harmonic. Therefore, mean value theorem \cite{[LS]} and gradient estimate imply that \begin{equation}\label{71}|f_i|,  |\nabla f_i|\leq\Phi(\epsilon|\gamma, n)\end{equation} on $B(x, \frac{1}{2}\epsilon_0R)$. Set $w_i = w'_i-f_i$. We can do a perturbation so that $w_i(x) = 0$.
Next we prove that $(w_1, ..., w_n)$ is a holomorphic chart on $B(x, \frac{\epsilon_0}{4}R)$. 
\begin{claim}\label{cl5}
$\int_{B(x, \frac{\epsilon_0R}{2})}\sum\limits_{1\leq i\leq n-1; 1\leq j\leq n}|\langle dw_i(y'), \overline{dw_j(y')}\rangle-2\delta_{ij}|^2+||dw_n(y')|-\sqrt{2}\frac{2\pi}{\alpha}r(y')^{\frac{2\pi}{\alpha}-1}|dy'<\Phi(\epsilon|n, \gamma)$.
\end{claim}
\begin{proof}
By the definition of $z_n$ right above proposition \ref{prop2} and (\ref{-1}), $|\frac{\partial z_n}{\partial a_n}(y)| = \frac{2\pi}{\alpha}r(y)^{\frac{2\pi}{\alpha}-1}$.
The proof follows from (\ref{56}), (\ref{61}), (\ref{-2}), (\ref{71}) and claim \ref{cl4}.
\end{proof}
Recall $K_r$ is defined in (\ref{-1000}). By claim \ref{cl5} and that $w_i$ are holomorphic, we have
\begin{claim}\label{cl6}
$|\frac{1}{(2\sqrt{-1})^n}\int_{B(x, \frac{\epsilon_0R}{2})}dw_1\wedge \overline{dw_1}\wedge\cdot\cdot\cdot\wedge dw_n\wedge\overline{dw_n} -vol(K_\frac{\epsilon_0R}{2})|\leq \Phi(\epsilon|n, \gamma)$.
\end{claim}
Set $w = (w_1, ..., w_n)$. By lemma \ref{lm3}, $w^{-1}(K_{\frac{\epsilon_0R}{2}-\Phi(\epsilon|n, \gamma)})$ is relatively compact in $B(x, \frac{\epsilon_0R}{2})$. Take the connected component $K'$ of $w^{-1}(K_{\frac{\epsilon_0R}{2}-\Phi(\epsilon|n, \gamma)})$ containing $B(x, \frac{\epsilon_0 R}{4})$. Then $w: K'\to K_{\frac{\epsilon_0R}{2}-\Phi(\epsilon|n, \gamma)}$ is proper.
Claim \ref{cl6} implies that if $\epsilon$ is sufficiently small, the degree of $w$ is $1$. Thus $w$ is generically one to one on $K'$. In particular, it is surjective. By the first conclusion of lemma \ref{lm3},  $w(B(x, \frac{1}{4}\epsilon_0R))$ contains $K_{\frac{1}{4}\epsilon_0R-\Phi(\epsilon|n, \gamma)}$.
Observe $w$ is a finite map, as the preimage of a point is a subvariety which is compact in the Stein manifold $\Omega$. According to proposition $14.7$ on page $87$ of \cite{[GPR]}, $w$ is an isomorphism on $B(x, \frac{1}{4}\epsilon_0R)$. Now we can find the values of $\tilde\epsilon$ and $\delta$ required in proposition \ref{prop2}. This concludes the proof.
\end{proof}

\begin{cor}\label{cor2}
Let $(Y^n, x)$ be a complete K\"ahler manifold with bisectional curvature bounded from below by $-1$ and $vol(B(x, 1))\geq v>0$. Then there exist $0<\epsilon'<<1$, $\delta_5, \delta_6<1$ depending only on $n, v$ so that the following hold. If $d_{GH}(B(x, r), B_{W}(o, r))<\epsilon' r$ for some metric cone $(W, o)$ and $0<r<\epsilon'$, then there exists a smooth function $u$ on $B(x, 2\delta_5r)$ with 
\begin{equation}0\leq u\leq C(n, v)\delta_5^2r^2; u_{\alpha\overline\beta}\geq c(n, v)g_{\alpha\overline\beta}>0;
\end{equation}
\begin{equation}
\min\limits_{y\in\partial B(x, \delta_5r)}u(y)> 4\sup\limits_{y\in B(x, \delta_6r)}u(y).
\end{equation}
\end{cor}
 \begin{proof}
 The proof is just a rescaled version of some arguments above.
 Let $0<\delta_4<<1$ depend only on $n, v$, to be determined.
Set $(Y', x', g') = (Y, x, \frac{g}{\delta_4^2r^2})$. Then $BK(Y')\geq -r^2\delta_4^2\geq -\epsilon'^2$, $vol(B(x', \frac{1}{\delta_4}))\geq \frac{c(n)v}{\delta_4^{2n}}$ and 
$$d_{GH}(B(x', \frac{1}{\delta_4}), B_{W}(o, \frac{1}{\delta_4}))<\frac{\epsilon'}{\delta_4}.$$
Observe Cheeger-Colding estimates (\ref{38}) and (\ref{39}) hold for the annulus $B(x', \frac{1}{5\delta_4})\backslash B(x', \frac{\delta_4}{100})$, if $\epsilon'$ is sufficiently small depending on $n, v, \delta_4$. By the same argument from (\ref{38}) to claim \ref{cl3}, we find a function $h_1$ on $B(x', \frac{1}{100\delta_4})$, $1>>\epsilon_0, \delta_4>0$ depending only on $n, v$ satisfying
$$(h_1)_{\alpha\overline\beta}\geq c(n, v)g_{\alpha\overline\beta}>0,$$
$$\min\limits_{y\in\partial B(x', \frac{1}{2000\delta_4})}h_1(y)> 4\sup\limits_{y\in B(x', \frac{\epsilon_0}{\delta_4})}h_1(y); 0\leq h_1(y)\leq C(n, v)\frac{1}{\delta_4^2}.$$ Now we freeze the value of $\delta_4=\delta_4(n, v)$ and $\epsilon'=\epsilon'(n, v, \delta_4) =\epsilon'(n, v)$.
We can think $h_1$ is defined on $B(x, \frac{r}{100})$. Take $u = \delta_4^2r^2h_1$, $\delta_5 = \frac{1}{2000}$, $\delta_6 = \epsilon_0$. This concludes the proof.
\end{proof}
\begin{cor}\label{cor-2}
Let $(M^n, x)$ be a complete K\"ahler manifold with $BK\geq -1$. Let $(X, (0, o)) = (\mathbb{C}^{n-1}, 0)\times (Z, o)$, where $(Z, o)$ is a real two dimensional cone with cone angle $\alpha$. Assume $vol(B(x, 1))\geq v>0$. Then there exist $0<\tilde{\epsilon}',  \delta_0<<1$ depending only on $n, v$ so that the following hold. If $0<r<\tilde{\epsilon}'$ and
$d_{GH}(B(x, r), B_X((0, o), r))<\tilde\epsilon'^2 r$, then there exists a holomorphic chart $(w_1, ..., w_n)$ on $B(x, \delta_0 r)$ such that
\begin{itemize}
\item $w_i(x) = 0(1\leq i\leq n)$.
\item  On $B(x, \delta_0 r)$, $|w_i|\leq C(n, v)r$. 
\item $(w_1,..., w_n)(B(x, \frac{1}{3}\delta_0r))\subset K_{\frac{5}{12}\delta_0r}$.
$(w_1,..., w_n)(B(x, \frac{4}{5}\delta_0r))\subset K_{\frac{8}{9}\delta_0r}$.  Recall that $K_r$ is defined in (\ref{-1000}).\item  $(w_1, ..., w_n)(B(x, \delta_0r))$ contains the domain $K_{\frac{9}{10}\delta_0r}$. $(w_1, ..., w_n)(B(x, \frac{2}{3}\delta_0r))$ contains the domain $K_{\frac{1}{2}\delta_0r}$. 
\end{itemize}
\end{cor}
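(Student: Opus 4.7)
The plan is to reduce the corollary to Proposition \ref{prop2} by a metric rescaling, after first using the noncollapsing hypothesis to secure a lower bound on the cone angle $\alpha$ depending only on $n$ and $v$. By Bishop-Gromov applied to $(M,x)$ with $Ric \geq -(n+1)$ and $vol(B(x,1)) \geq v$, we have $vol(B(x,r)) \geq c(n,v) r^{2n}$ for all $r \leq 1$. Colding's volume convergence theorem combined with the Gromov-Hausdorff hypothesis then forces $vol(B_X((0,o),r))$ to be comparably large, while the elementary product-estimate $vol(B_X((0,o),r)) \leq C(n)\alpha r^{2n}$ yields $\alpha \geq \gamma(n,v) > 0$.

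Next, I rescale: set $g' = g/r^2$, so that $(M',x)$ satisfies $BK(M') \geq -r^2 \geq -\tilde\epsilon'^2$ and $d_{GH}(B_{M'}(x,1), B_X((0,o),1)) < \tilde\epsilon'^2$. Apply Proposition \ref{prop2} to $M'$ with $a = \epsilon = \tilde\epsilon'$, noting that $a/\epsilon = 1$, $\epsilon^3/a^2 = \tilde\epsilon'$, and $\epsilon a = \tilde\epsilon'^2$; all three hypotheses of Proposition \ref{prop2} are satisfied provided $\tilde\epsilon' \leq \tilde\epsilon(n,\gamma(n,v))$. Choosing $\tilde\epsilon' = \tilde\epsilon'(n,v)$ small enough, we obtain a holomorphic chart $(w_1, \dots, w_n)$ on $B_{M'}(x, \delta\tilde\epsilon') = B_M(x, \delta\tilde\epsilon' r)$, and set $\delta_0 = \delta\tilde\epsilon'$, which depends only on $n$ and $v$.

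Finally, I rescale the coordinates by constants (which preserves biholomorphism): replace $w_i$ by $(r/\tilde\epsilon')\,w_i$ for $1 \leq i \leq n-1$ and $w_n$ by $(r/\tilde\epsilon')^{2\pi/\alpha}\,w_n$. This normalizes the chart so that it approximates the standard cone coordinates $z_i$ on $B_X((0,o),\delta_0 r)$ with error $\Phi(\tilde\epsilon'|n,v)\cdot\delta_0 r$ for $i \leq n-1$ and $\Phi(\tilde\epsilon'|n,v)\cdot(\delta_0 r)^{2\pi/\alpha}$ for $i = n$. The bound $|w_i| \leq C(n,v)\,r$ then follows from $|z_i|, |z_n|^{\alpha/(2\pi)} \leq \delta_0 r$ on $B_X((0,o),\delta_0 r)$, using $r < 1$ and $2\pi/\alpha \geq 1$. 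Both the inclusion of $w(B(x,\lambda\delta_0 r))$ into $K_{\mu\delta_0 r}$, and the converse inclusion of $K_{\mu'\delta_0 r}$ into the image of $w$, follow from combining the small Gromov-Hausdorff error with a degree argument analogous to the one at the end of the proof of Proposition \ref{prop2} (using finiteness of the map $w$ on the Stein subdomain and proposition $14.7$ of \cite{[GPR]}); for small enough $\tilde\epsilon'$, the error terms fit comfortably inside the gaps $5/12 - 1/3 = 1/12$, $8/9 - 4/5 = 4/45$, $1 - 9/10 = 1/10$, and $2/3 - 1/2 = 1/6$. The main obstacle is the bookkeeping around the two different scaling exponents ($1$ for the $\mathbb{C}^{n-1}$ factor and $2\pi/\alpha \geq 1$ for the cone factor), which make the rescalings and error estimates inhomogeneous; but the uniform bound $\alpha \geq \gamma(n,v)$ caps $2\pi/\alpha$ in terms of $n$ and $v$, which is what keeps all error terms controllable by quantities depending only on $n$ and $v$.
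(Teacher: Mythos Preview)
Your approach is the paper's: bound $\alpha \geq \gamma(n,v) > 0$ via volume comparison and Colding's volume convergence, rescale the metric to reduce to Proposition~\ref{prop2}, then rescale the resulting coordinates. The paper rescales by $1/(\tilde\epsilon' r)^2$, applies Proposition~\ref{prop2} with $a = 1$, $\epsilon = \tilde\epsilon'$, and then multiplies the chart by $\tilde\epsilon' r$ (resp.\ $(\tilde\epsilon' r)^{2\pi/\alpha}$); your choice $g' = g/r^2$ with $a = \epsilon = \tilde\epsilon'$ is an equally valid variant.

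There is, however, a bookkeeping slip in your coordinate rescaling. With $g' = g/r^2$, the Gromov--Hausdorff identification is between $B_{M'}(x,1)$ and $B_X((0,o),1)$, so the chart $w_i$ produced by Proposition~\ref{prop2} approximates the cone coordinate $z_i$ as it sits on the \emph{unit} ball of $X$. To compare with the $z_i$ on $B_X((0,o),r)$ --- which is what the regions $K_{\lambda\delta_0 r}$ refer to --- you must multiply by the metric scale factor $r$ for $i\leq n-1$ and $r^{2\pi/\alpha}$ for $i=n$, not by $r/\tilde\epsilon'$. With your factor, $(w)(B(x,\tfrac{1}{3}\delta_0 r))$ lands in approximately $K_{\delta_0 r/(3\tilde\epsilon')}$, which is \emph{not} contained in $K_{\frac{5}{12}\delta_0 r}$ once $\tilde\epsilon'$ is small, so the third and fourth bullets fail as stated. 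Once the factor is corrected to $r$ (and $r^{2\pi/\alpha}$), the rest of your argument goes through; in fact no further degree argument is needed beyond what is already inside Proposition~\ref{prop2}, since the forward inclusions follow directly from $w_i$ being uniformly close to $z_i$, and the reverse ones from inverting the biholomorphism already established there and using the same closeness.
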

\begin{proof}
Corollary \ref{cor-2} is a rescaled version of proposition \ref{prop2}.  First, note that if $\tilde\epsilon'$ is sufficiently small, then $\alpha>c(n, v)>0$. Set $(M', x', g') = (M, x, \frac{g}{\tilde\epsilon'^2 r^2})$. Then $BK(M')\geq -\tilde\epsilon'^2r^2\geq -\tilde\epsilon'^4$, $$d_{GH}(B(x', \frac{1}{\tilde\epsilon'}), B_{X}((0, o), \frac{1}{\tilde\epsilon'}))<\tilde\epsilon'.$$  According to proposition \ref{prop2}, if $\tilde\epsilon'=\tilde\epsilon'(n, v)$ is sufficiently small, then the following hold. There exist $\delta=\delta(n, v)>0$ and a holomorphic chart $(\tilde{w}_1, ..., \tilde{w}_n)$ on $B(x', \delta)$ with
\begin{itemize}
\item $\tilde{w}_i(x') = 0$ for $1\leq i\leq n$.
\item  On $B(x', \delta)$, $|\tilde{w}_i|\leq C(n, v)$. 
\item $(\tilde{w}_1,..., \tilde{w}_n)(B(x', \frac{1}{3}\delta))\subset K_{\frac{5}{12}\delta}$.
$(\tilde{w}_1,..., \tilde{w}_n)(B(x', \frac{4}{5}\delta))\subset K_{\frac{8}{9}\delta}$.  
\item  $(\tilde{w}_1, ..., \tilde{w}_n)(B(x', \delta))$ contains the domain $K_{\frac{9}{10}\delta}$. $(\tilde{w}_1, ..., \tilde{w}_n)(B(x', \frac{2}{3}\delta))$ contains the domain $K_{\frac{1}{2}\delta}$. 
\end{itemize}

We may think $\tilde{w}_i$ are functions on $B(x, \delta\tilde\epsilon' r)$. Set $w_j = \tilde\epsilon'r\tilde{w}_j$ for $1\leq j\leq n-1$, $w_n = (\tilde\epsilon'r)^{\frac{2\pi}{\alpha}}\tilde{w}_n$, $\delta_0 = \delta\tilde\epsilon'$. The proof is complete. \end{proof}

\begin{cor}\label{cor-1}
Under the assumptions of proposition \ref{prop2}, there exist $\rho=\rho(n, \gamma)>0$ and an open set $\Omega_x$ with $B(x, \delta a)\supset\Omega_x\supset B(x, \rho a)$, such that $\Omega_x$ biholomorphic to a ball in the Euclidean space. In particular, $\Omega_x$ is contractible.
\end{cor}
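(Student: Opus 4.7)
The plan is to define $\Omega_x$ as the $w$-preimage of a small Euclidean ball in $\mathbb{C}^n$, where $w = (w_1, \ldots, w_n)$ is the holomorphic chart from Proposition \ref{prop2}. Since $w$ is a biholomorphism from $B(x, \delta a)$ onto its image, and the third conclusion of Proposition \ref{prop2} gives $w(B(x, \delta a)) \supset K_{(\delta - \Phi(\epsilon|n,\gamma))a}$, the $w$-preimage of any Euclidean ball sitting inside $K_{(\delta - \Phi)a}$ is automatically biholomorphic to that Euclidean ball, hence contractible. The remaining work is to choose the radius of the ball carefully so that both $\Omega_x \subset B(x,\delta a)$ and $\Omega_x \supset B(x,\rho a)$ hold with $\rho = \rho(n,\gamma)>0$.

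First, I would choose $\rho_1 = \rho_1(n, \gamma) > 0$ such that $B_{\mathbb{C}^n}(0, \rho_1 a) \subset K_{(\delta - \Phi)a}$. From the defining inequality
\[
K_r = \left\{ \textstyle\sum_{i<n} |z_i|^2 + |z_n|^{\alpha/\pi} \leq r^2 \right\},
\]
the binding constraint comes from the $z_n$ direction (since $\alpha/\pi\leq 2$ and we may take $\rho_1 a \leq 1$), namely $(\rho_1 a)^{\alpha/\pi} \leq \tfrac{1}{2}(\delta - \Phi)^2 a^2$. Because $\alpha\geq\gamma$, the exponent $\alpha/\pi$ is bounded below by $\gamma/\pi$, so this inequality can be solved with a $\rho_1$ depending only on $n,\gamma$ (after normalizing $a=1$ by metric rescaling, which is harmless since all the hypotheses of Proposition \ref{prop2} are scale-invariant). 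Define $\Omega_x := w^{-1}(B_{\mathbb{C}^n}(0, \rho_1 a))$.

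Second, I would verify $\Omega_x \supset B(x, \rho a)$ for some $\rho = \rho(n, \gamma) > 0$. For $y \in B(x, \rho a)$, the Gromov--Hausdorff approximation sends $y$ to a point $y'$ lying within $B_X((0, o), (\rho + \epsilon) a)$ of the model cone, whose model coordinates satisfy $|z_i(y')| \leq (\rho + \epsilon)a$ for $i < n$ and $|z_n(y')| \leq ((\rho + \epsilon) a)^{2\pi/\alpha}$. The pointwise closeness in Proposition \ref{prop2}, namely $|w_i(y) - z_i(y')| \leq a\Phi(\epsilon|n,\gamma)$ for $i<n$ and $|w_n(y) - z_n(y')| \leq a^{2\pi/\alpha}\Phi(\epsilon|n,\gamma)$, then gives an explicit Euclidean bound on $|w(y)|$. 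Choosing $\rho = \rho(n,\gamma)$ small and then requiring $\epsilon$ (even) smaller than $\tilde{\epsilon}(n,\gamma)$ makes $|w(y)| < \rho_1 a$, which shows $B(x,\rho a) \subset \Omega_x$.

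The main subtlety is the anomalous, sub-linear scaling in the singular $z_n$ direction: since the cone angle $\alpha$ can be as small as $\gamma$, the coordinate $z_n$ scales like the $(2\pi/\alpha)$-th power of distance rather than linearly, which forces $\rho_1$ to be much smaller than $\delta$ and obliges us to track carefully the separate scalings of $w_1, \dots, w_{n-1}$ versus $w_n$ when estimating $|w(y)|$. The lower bound $\alpha \geq \gamma$ is exactly what keeps the exponent $2\pi/\alpha$ bounded above, ensuring both $\rho_1$ and $\rho$ depend only on $n, \gamma$ as required.
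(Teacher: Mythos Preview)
Your proof is correct and is precisely the natural argument the paper has in mind; the paper states this corollary without proof, treating it as an immediate consequence of Proposition~\ref{prop2}, and your construction $\Omega_x = w^{-1}(B_{\mathbb{C}^n}(0,\rho_1 a))$ together with the coordinate estimates is exactly how one extracts it. The one minor point is that in order for $\rho_1$ and $\rho$ to depend only on $n,\gamma$ (and not on $\epsilon$ through $\Phi$), you implicitly shrink $\tilde\epsilon(n,\gamma)$ so that $\Phi(\epsilon|n,\gamma)<\delta/2$, say; this is harmless since Proposition~\ref{prop2} is an existence statement for $\tilde\epsilon$ and nothing prevents taking it smaller.
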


Now we prove corollary \ref{cor0}:
\begin{proof}
Assume $M$ is not simply connected. Let $\gamma$ be a smooth closed curve on $M$ which represents a nonzero element in $\pi_1(M)$. By the second variation of arc length, one finds that $\gamma$ cannot minimize the length in its free homotopy class. Thus we can take a sequence of smooth closed curves $\gamma_i\to \infty$ on $M$ with $[\gamma_i] = [\gamma]\in\pi_1(M)$ and the length $|\gamma_i|\leq |\gamma|$. Let $q_i\in \gamma_i$. Let $r_i = d(p, q_i)\to\infty$.

Consider the blow down sequence $(M_i, p_i) = (M, p, \frac{g}{(r_i)^2})$. By passing to a subsequence, we may assume $(M_i,  p_i)\to (X, p_\infty)$ in the pointed Gromov-Hausdorff sense. We may think that the $q_i\in\gamma_i\subset (M_i, p_i)$ and $q_i\to q_\infty\in (X, p_\infty)$. Then $d(p_\infty, q_\infty) = 1$ on $X$.
By Cheeger-Colding \cite{[CC2]}, $(X, p_\infty)$ is a metric cone. Thus the tangent cone at $q_\infty$ splits off a line. This means that given any $\epsilon>0$, there exists $\delta>0$ with $B(q_\infty, \delta)$ $\epsilon\delta$-Gromov-Hausdorff close to a ball in $(\mathbb{C}, 0)\times (Z, o)$ centered at $(0, o)$. Here $(Z, o)$ is a complex one dimensional cone with cone angle $\alpha>0$. Then for $i$ sufficiently large, $B(q_i, \delta)$ is also $\epsilon\delta$-Gromov-Hausdorff close to a ball in $(\mathbb{C}, 0)\times (Z, o)$ centered at $(0, o)$. We may assume $\epsilon$ is so small that corollary \ref{cor-1} can be applied. Then there exists a contractible neighborhood $\Omega_i$ of $q_i$ which contains a fixed size metric ball centered at $q_i$. 
As the length of $\gamma_i$ is converging to zero in $(M_i, p_i)$, eventually $\gamma_i\subset \Omega_i$. Hence $\gamma_i$ is contractible. Contradiction!
\end{proof}

\section{Separation of points}
\begin{prop}\label{prop3}
Let $v, R>0$. There exists $\epsilon'_1=\epsilon'_1(n, v)>0$ so that the following hold.
Let $(Y'^n, q')$ be a complete K\"ahler manifold with bisectional curvature lower bound $-\frac{(\epsilon'_1)^3}{R^2}$. Assume $vol(B(q',  \frac{R}{\epsilon'_1}))\geq \frac{vR^{2n}}{(\epsilon'_1)^{2n}}$.  Assume also \begin{equation}\label{72}d_{GH}(B(q', \frac{1}{\epsilon'_1}R), B_{W}(o, \frac{1}{\epsilon'_1}R))\leq \epsilon'_1 R\end{equation} for some metric cone $(W, o)$ centered at $o$. Then there exist $N' = N'(v, n)\in\mathbb{N}, 1 >\delta'_1> 5\delta'_2>c(v, n)>0$ and holomorphic functions $g'^1, ..., g'^{N'}$ on $B(q', \delta'_1R)$ with $g'^j(q') = 0$ and \begin{equation}\label{73}\min\limits_{x\in\partial B(q', \frac{1}{3}\delta'_1R)}\sum\limits_{j=1}^{N'}|g'^j(x)|^2> 2\sup\limits_{x\in B(q', \delta'_2R)}\sum\limits_{j=1}^{N'}|g'^j(x)|^2.\end{equation} Furthermore, for all $j$, \begin{equation}\label{74}\frac{\sup\limits_{x\in B(q', \frac{1}{2}\delta'_1R)}|g'^j(x)|^2}{\sup\limits_{x\in B(q', \frac{1}{3}\delta'_1R)}|g'^j(x)|^2}\leq C(n, v).\end{equation} \end{prop}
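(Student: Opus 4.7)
The plan parallels Proposition \ref{prop2}: construct a plurisubharmonic weight, then approximate holomorphic separating functions adapted to the cone, correct via H\"ormander's $L^2$-estimate, and conclude the growth bound with the three-circle theorem. First, apply Corollary \ref{cor2} at scale $R$ to $(Y', q')$: for $\epsilon_1'$ sufficiently small depending on $n, v$, there is a smooth $u$ on $B(q', 2\delta_5 R)$ with $u_{\alpha\overline\beta} \geq c(n,v) g_{\alpha\overline\beta}$ and $\min_{\partial B(q', \delta_5 R)} u > 4 \max_{B(q', \delta_6 R)} u$. Set $\delta_1' := \delta_5$ and $\delta_2' := \delta_6$; shrinking $\delta_6$ if necessary we can enforce $\delta_1' > 5\delta_2'$. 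The component $\Omega$ of $\{u < 2 \max_{B(q', \delta_2' R)} u\}$ containing $B(q', \delta_2' R)$ is a relatively compact Stein domain in $B(q', \delta_1' R)$ on which Theorem \ref{thm-2} delivers H\"ormander's $L^2$-estimate with weight proportional to $u$.

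Next, choose a maximal $\delta_2' R$-separated net $\{p_j\}_{j=1}^{N'}$ on $\partial B_W(o, \delta_1' R/3)$; volume comparison gives $N' \leq N'(n, v)$. On $W$ pick smooth functions $\tilde h_j$ vanishing on $B_W(o, 2\delta_2' R)$ and equal to a fixed positive constant in a small neighborhood of $p_j$. Transplanting their boundary values to $\partial B(q', \delta_1' R)$ via the Gromov-Hausdorff approximation and solving the Dirichlet problem on $Y'$ exactly as in the construction of $w'_n$ in Lemma \ref{lm3} produces real-harmonic $b_j$ that are $\Phi(\epsilon_1'|n,v)$-close to the transplants. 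At dense regular points of $W$ (where the tangent cone splits off $\mathbb{R}^{2n}$, by Theorems \ref{thm-1} and \ref{thm-4}), apply the Cheeger-Colding-Tian $J$-orthogonalization of (\ref{58}) to pair the $b_j$ into complex combinations $\tilde g^j$ satisfying $\int_{B(q', \delta_1' R)} |\overline\partial \tilde g^j|^2 < \Phi(\epsilon_1'|n,v)$; the contribution from a tubular neighborhood of the cone's singular stratum is absorbed by a volume-comparison estimate as in (\ref{66})--(\ref{68}). Then Theorem \ref{thm-2} on $\Omega$ solves $\overline\partial f^j = \overline\partial \tilde g^j$ with $\int_\Omega |f^j|^2 e^{-u} < \Phi(\epsilon_1'|n,v)$; the mean value inequality \cite{[LS]} and Cheng-Yau gradient estimate \cite{[CY]} give $|f^j|, |\nabla f^j| \leq \Phi(\epsilon_1'|n,v)$ on $B(q', \delta_1' R/2)$. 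Setting $g'^j := \tilde g^j - f^j - (\tilde g^j - f^j)(q')$ yields holomorphic functions vanishing at $q'$, and (\ref{73}) follows because $\tilde g^j \approx \tilde h_j$ makes $\sum_j |g'^j|^2$ bounded below on $\partial B(q', \delta_1' R/3)$ (some $p_j$ lies within $\delta_2' R$ of any such boundary point) while being small on $B(q', \delta_2' R)$.

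For (\ref{74}), the construction yields $\max_{B(q', \delta_1' R)} |g'^j| \leq C(n, v)$, and by design each $\tilde g^j$ takes a definite value near its corresponding $p_j \in \partial B(q', \delta_1' R/3)$, so $\max_{B(q', \delta_1' R/3)} |g'^j| \geq c(n, v) > 0$. Applying Theorem \ref{thm-3} to $g'^j$ on the rescaled manifold $(Y', (\epsilon_1')^3 g/R^2)$, whose holomorphic sectional curvature is $\geq -1$, the log-convexity of $M^j(r) := \max_{B(q', r)} |g'^j|$ in $\log\tanh(r/2)$ combined with the two-sided bound $c \leq M^j(\delta_1' R/3) \leq M^j(\delta_1' R) \leq C$ gives the desired ratio estimate. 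The principal obstacle is the second step: unlike the product case $W = \mathbb{C}^{n-1} \times Z$ handled in Lemma \ref{lm3}, a general metric cone carries no global holomorphic coordinate system, so the K\"ahler pairing must be performed at dense regular points of $W$ and globally matched in $L^2$, with the cone's codimension-two singular stratum handled by a uniform volume-comparison error estimate.
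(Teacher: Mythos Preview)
Your second step contains a genuine gap. The functions $\tilde h_j$ you place on $W$ are arbitrary smooth bump functions; they carry no holomorphic content. In Lemma~\ref{lm3} the argument (\ref{60})--(\ref{69}) for $w'_n$ worked only because $z_n$ is \emph{genuinely holomorphic} on the smooth part of the model cone $\mathbb{C}^{n-1}\times Z$, so that at each regular point $z_n$ is $C^1$-close to a linear holomorphic coordinate. Your bump functions $\tilde h_j$ satisfy nothing of the sort: at a regular point of $W$ they look like arbitrary smooth functions of the local Euclidean coordinates, and no $J$-rotation of the type in (\ref{58}) will make a complex combination of them have small $\overline\partial$. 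The Cheeger--Colding--Tian pairing (\ref{58}) applies to harmonic functions approximating an \emph{orthonormal frame of linear coordinates} on a Euclidean factor, not to a collection of bump functions indexed by net points; there is no mechanism to pair your $b_j$'s into almost-holomorphic combinations, and indeed no reason $N'$ should be even.

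The paper's route, which it only sketches by reference to \cite{[L2]}, is an \emph{induction on the stratification} of the singular set. If the cone $W$ already splits off $\mathbb{R}^{2n}$ one is in the Euclidean situation and obtains holomorphic coordinates directly from Proposition~\ref{prop2}. Otherwise, at any point $q''$ in $B(q',\delta R)$ away from a set of Hausdorff codimension $\geq 2$, some small ball $B(q'',r)$ is Gromov--Hausdorff close to a cone splitting off strictly more Euclidean factors (because $W$ is a cone, hence its tangent cones at points off the vertex split an extra line, and Theorem~\ref{thm-4} upgrades this to $\mathbb{R}^2$). One then applies the inductive hypothesis at these smaller scales to produce local holomorphic separators, extends them to the large ball via H\"ormander's estimate on the Stein domain $\Omega$ (using a weight with a logarithmic pole at $q''$ to force vanishing there), and covers $\partial B(q',\frac{1}{3}\delta'_1 R)$ by finitely many such points with multiplicity controlled by $n,v$. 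The three-circle theorem (Theorem~\ref{thm-3}) is used exactly as you indicate for (\ref{74}). Your use of Corollary~\ref{cor2} to build the Stein domain and your treatment of (\ref{74}) are correct, but the construction of the almost-holomorphic candidates must proceed by this scale-induction rather than by transplanting bump functions from $W$.
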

\begin{proof}
The proof is a minor modification of proposition $3$ in \cite{[L2]}.
The key is an induction argument which involves the stratification of singular sets.
Note we need to apply the three circle theorem \ref{thm-3}.
\end{proof}

The next corollary is a rescaled version of proposition \ref{prop3}.
\begin{cor}\label{cor-3}
Let $(Y^n, q)$ be a complete K\"ahler manifold with $BK\geq -1$ and $vol(B(q, 1))>v>0$. Then there exist $\epsilon_1=\epsilon_1(n, v)>0$ so that the following hold. Assume $$d_{GH}(B(q, r), B(o, r))<\epsilon^2_1r$$ for some metric cone $(W, o)$ and $0<r<\epsilon_1$. Then there exist $N= N(v, n)\in\mathbb{N}, 1 >\delta_1> 5\delta_2>c(v, n)>0$ and holomorphic functions $g^1, ..., g^N$ on $B(q, \delta_1r)$ with $g^j(q) = 0$ and $$\min\limits_{x\in\partial B(q, \frac{1}{3}\delta_1r)}\sum\limits_{j=1}^N|g^j(x)|^2> 2\sup\limits_{x\in B(q, \delta_2r)}\sum\limits_{j=1}^N|g^j(x)|^2.$$ Furthermore, for all $j$, $$\frac{\sup\limits_{x\in B(q, \frac{1}{2}\delta_1r)}|g^j(x)|^2}{\sup\limits_{x\in B(q, \frac{1}{3}\delta_1r)}|g^j(x)|^2}\leq C(n, v).$$\end{cor}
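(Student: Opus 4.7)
The plan is to deduce this corollary from Proposition~\ref{prop3} by a metric rescaling, exactly in the spirit of how Corollary~\ref{cor-2} was derived from Proposition~\ref{prop2} earlier in this section. With $\epsilon_1 = \epsilon_1(n, v) > 0$ to be fixed momentarily, I would set $(Y', q', g') := (Y, q, g/(\epsilon_1 r)^2)$. Under this conformal rescaling, distances expand by $1/(\epsilon_1 r)$, bisectional curvature contracts by $(\epsilon_1 r)^2$, and the complex structure, hence the notion of holomorphic function, is preserved. Thus $B(q, r) \subset Y$ becomes $B(q', 1/\epsilon_1) \subset Y'$, the curvature bound becomes $BK(Y') \geq -(\epsilon_1 r)^2 \geq -\epsilon_1^4$ (using $r < \epsilon_1$), and the Gromov-Hausdorff hypothesis rescales to $d_{GH}(B(q', 1/\epsilon_1), B_{W'}(o', 1/\epsilon_1)) < \epsilon_1$, where $(W', o')$ is the cone $(W, o)$ rescaled by $1/(\epsilon_1 r)$ and is still a metric cone.

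For the volume hypothesis required by Proposition~\ref{prop3}, I would invoke Bishop-Gromov volume comparison against the space form of Ricci curvature $-(n-1)$: since $vol(B(q, 1)) \geq v$ and $0 < r < 1$, one obtains $vol(B(q, r)) \geq c(n)\, v\, r^{2n}$, so after rescaling $vol(B(q', 1/\epsilon_1)) \geq c(n)\, v / \epsilon_1^{2n}$. Choosing $\epsilon_1 \leq \epsilon'_1(n, c(n) v)$, where $\epsilon'_1$ is the constant from Proposition~\ref{prop3} applied with $R = 1$ and volume constant $c(n) v$, ensures simultaneously that the GH-closeness threshold $\epsilon_1$ is acceptable, that $-\epsilon_1^4 \geq -\epsilon_1^3$ is an admissible curvature bound (as $\epsilon_1 < 1$), and that the rescaled volume bound matches the requirement of the proposition.

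Applying Proposition~\ref{prop3} to $(Y', q')$ then yields $N' = N'(n, v) \in \mathbb{N}$, constants $1 > \delta'_1 > 5\delta'_2 > c(n, v) > 0$, and holomorphic functions $g'^1, \ldots, g'^{N'}$ on $B(q', \delta'_1)$ with $g'^j(q') = 0$, together with the boundary-separation inequality on $\partial B(q', \delta'_1/3)$ versus $B(q', \delta'_2)$ and the growth-ratio bound comparing the radii $\delta'_1/2$ and $\delta'_1/3$. Since holomorphic functions and their pointwise values are unchanged by a conformal rescaling of the metric, I would set $g^j := g'^j$, $N := N'$, $\delta_1 := \epsilon_1 \delta'_1$, $\delta_2 := \epsilon_1 \delta'_2$; because $B(q', s) = B(q, s \epsilon_1 r)$ for every $s > 0$, all conclusions transcribe verbatim to the claimed inequalities on $B(q, \delta_1 r)$, $\partial B(q, \delta_1 r/3)$, $B(q, \delta_2 r)$, and $B(q, \delta_1 r/2)$. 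I do not expect any real obstacle, as the statement is packaged precisely as a rescaled version of Proposition~\ref{prop3}; the only step requiring a brief check is the conversion of the fixed-scale bound $vol(B(q, 1)) \geq v$ into the scale-invariant form at radius $r$ demanded by the proposition, which is exactly what volume comparison supplies.
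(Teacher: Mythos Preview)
Your proof is correct and follows exactly the paper's own approach: the paper simply says ``It suffices to scale the metric by $\frac{1}{\epsilon_1^2 r^2}$. We omit the details,'' and you have supplied precisely those details via the rescaling $(Y',q',g')=(Y,q,g/(\epsilon_1 r)^2)$ and an application of Proposition~\ref{prop3} with $R=1$. The only cosmetic point is that taking $\epsilon_1=\epsilon'_1(n,c(n)v)$ (rather than $\leq$) makes the ball radii in the rescaled picture line up exactly with the hypotheses of Proposition~\ref{prop3}, avoiding any need to pass GH closeness to a sub-ball.
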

The proof is similar to corollary \ref{cor-2}. It suffices to scale the metric by $\frac{1}{\epsilon_1^2r^2}$.  We omit the details.
Now we come to the separation of points. The following proposition uses the same notations as in theorem \ref{thm1}.
\begin{prop}\label{prop4}
Let $x\in M_\infty$, $r(x) = d(x, p_\infty)$. There exist $\epsilon_2>0, \delta_3>0, 1>\gamma_1>0$ depending only on $n, r(x), v$ so that the following hold. Consider a sequence $x_i\to x$, $x_i\in M_i$. Let $(X, o)$ be a metric cone centered at $o$. If $0<R<\delta_3$ and $d_{GH}(B(x, R), B_X(o, R))<\epsilon_2 R$, then for sufficiently large $i$ and any two points $y^1_i\neq y^2_i\in B(x_i, \gamma_1R)$ with $d(y^1_i, y^2_i)>d>0$, there exists a holomorphic function $f_i$ on $B(x_i, 2\gamma_1R)$ with $f_i(y^1_i) = 0, f_i(y^2_i) = 1$ and $|f_i|\leq C(n, v, r(x), d, R)$. 
\end{prop}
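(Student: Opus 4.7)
The plan is to produce $f_i$ via a Hörmander $L^2$ construction, with local input from Corollary \ref{cor-3} applied at $y^1_i$ at a small cone-close scale, extended to the whole ball $B(x_i, 2\gamma_1 R)$ using a strictly plurisubharmonic weight from Corollary \ref{cor2} combined with a logarithmic pole at $y^1_i$, and the three-circle theorem \ref{thm-3} used to transport sup-norm bounds across scales.

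More concretely, first take $\gamma_1 \leq \delta_5/2$ and use Corollary \ref{cor2} applied at $x_i$ at scale $R$ to obtain a strictly plurisubharmonic function $u_i$ on $B(x_i, 2\delta_5 R)$. Next, by Cheeger-Colding theory \cite{[CC2]}, every tangent cone at the limit point $y^1 = \lim y^1_i$ is a metric cone, so one can pick a scale $r_1 \ll d$ at which $B(y^1_i, r_1)$ is $\epsilon_1^2 r_1$-Gromov-Hausdorff close to a tangent cone for all large $i$. Apply Corollary \ref{cor-3} at $y^1_i$ at this scale to obtain holomorphic functions $g^1_i, \ldots, g^N_i$ on $B(y^1_i, \delta_1 r_1)$ with $g^j_i(y^1_i) = 0$ and the separation property (73). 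Pick a smooth cutoff $\chi$ equal to one on $B(y^1_i, \delta_1 r_1/2)$ and supported in $B(y^1_i, \delta_1 r_1)$, and solve $\bar\partial \eta^j_i = \bar\partial(\chi g^j_i)$ on $B(x_i, 2\gamma_1 R)$ via Theorem \ref{thm-2} with weight $A u_i + (2n+2)\log \sigma_i$, where $\sigma_i$ agrees with $\sum_k |g^k_i|^2$ near $y^1_i$ and is smoothly extended by a large positive constant away from $B(y^1_i, \delta_1 r_1)$. The strict plurisubharmonicity of $A u_i$ gives the required positivity of the weight, while finiteness of the weighted $L^2$-norm against the logarithmic pole at $y^1_i$ forces $\eta^j_i(y^1_i) = 0$, so that $F^j_i := \chi g^j_i - \eta^j_i$ is holomorphic on $B(x_i, 2\gamma_1 R)$ with $F^j_i(y^1_i) = 0$; Cheng-Yau's gradient estimate upgrades the $L^2$-bound to a sup-norm bound, and three-circle \ref{thm-3} propagates it, yielding $|F^j_i| \leq C(n, v, r(x), d, R)$.

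The hard part will be showing that for some index $j_*$, $F^{j_*}_i(y^2_i)$ is bounded away from zero, as Hörmander's theory yields only upper bounds on $|\eta^j_i(y^2_i)|$. Because $r_1 < d$ places $y^2_i$ outside $\mathrm{supp}\,\chi$, we have $F^j_i(y^2_i) = -\eta^j_i(y^2_i)$, so we need a genuine lower bound on a Hörmander correction. I expect to handle this by a contradiction-compactness argument: if along a subsequence $F^j_i(y^2_i) = 0$ for every $j$, then by Cheng-Yau's gradient estimate one extracts uniform limits of the $(F^j_i)$ (together with the $(g^j_i)$, rescaled to the scale $r_1$) as holomorphic objects on $M_\infty$, and the resulting limit map would fail to separate the limiting points $y^1$ and $y^2$ while still satisfying the separation property (73) inherited from Corollary \ref{cor-3}; since (73) implies local injectivity of the limiting map in a definite neighbourhood of $y^1$, and $y^2$ lies at distance at least $d$ from $y^1$, this is the desired contradiction. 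Once $j_*$ is identified, setting $f_i := F^{j_*}_i / F^{j_*}_i(y^2_i)$ completes the construction, with the bound $|f_i| \leq C(n, v, r(x), d, R)$ following from the sup-norm bound on $F^{j_*}_i$ together with the (subsequential) lower bound on $|F^{j_*}_i(y^2_i)|$.
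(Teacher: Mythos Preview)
Your construction has a genuine gap in the ``hard part.'' The contradiction you sketch does not close: property (\ref{73}) from Corollary~\ref{cor-3} is a growth condition comparing $\sum_j |g^j|^2$ on $\partial B(y^1,\tfrac{1}{3}\delta_1 r_1)$ with its values on the smaller ball $B(y^1,\delta_2 r_1)$. It does not imply injectivity of the map $(F^1_\infty,\dots,F^N_\infty)$, and even if it did imply injectivity on some neighbourhood $U$ of $y^1$, the point $y^2$ lies outside $U$ (since $d(y^1,y^2)\geq d \gg r_1$), so there is no obstruction whatsoever to $F^j_\infty(y^2)=0$ for all $j$. In short, a map can be locally injective near $y^1$ and still send a distant point $y^2$ to the same image. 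Your compactness argument therefore produces no contradiction, and you are left with no mechanism to bound $|F^{j_*}_i(y^2_i)|$ from below.

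The paper avoids this issue entirely by the standard two-point trick: it applies Corollary~\ref{cor-3} at \emph{both} $y^1_i$ and $y^2_i$, builds cut-off logarithmic weights $v^1_i$ and $v^2_i$ with poles at $y^1_i$ and $y^2_i$ respectively, and uses the single weight $\psi_i=\xi h_i+v^1_i+v^2_i$. One then solves $\overline\partial w_i=\overline\partial\mu_i$ for a smooth cutoff $\mu_i$ equal to $1$ near $y^1_i$ and supported in $B(y^1_i,\tfrac{d}{2})$; the two log poles force $w_i(y^1_i)=w_i(y^2_i)=0$ simultaneously, so $f_i=\mu_i-w_i$ takes the values $1$ and $0$ exactly, with no lower-bound argument needed. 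Your setup is easily repaired along these lines: drop the attempt to extend the $g^j_i$ themselves, and instead use them only to manufacture the second logarithmic pole at $y^2_i$.
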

\begin{remark}
The point is that all constants are independent of $i$. Thus, limit functions separate near points on $M_\infty$.
\end{remark}
\begin{proof}
The volume comparison theorem says $vol(B(x_i, 1))\geq c(n, v, r(x))>0$.
By corollary \ref{cor2}, we can find small positive constants $\gamma_0, \gamma_1, \epsilon_2, \delta_3$ depending only on $n, v, r(x)$ and a function $h_i$ with \begin{equation}\label{-3}C(n, v, r(x))\gamma_0^2R^2 \geq h_i\geq 0\end{equation} on $B(x_i, \gamma_0R)$. Moreover, 
\begin{equation}\label{75}(h_i)_{\alpha\overline\beta}\geq c(n, v, r(x))g_{\alpha\overline\beta}>0,\end{equation}
\begin{equation}\label{76}\min\limits_{y\in\partial B(x_i, \frac{\gamma_0R}{2})}h_i(y)> 4\sup\limits_{y\in B(x_i, 3\gamma_1R)}h_i(y).\end{equation}
Let $\Omega_i$ be the connected component of $\{z\in B(x_i, \frac{\gamma_0R}{2})|h_i(z)<2\sup\limits_{y\in B(x_i, 3\gamma_1R)}h_i(y)\}$ containing $B(x_i, 3\gamma_1R)$. Then $\Omega_i$ is relatively compact in $B(x_i, \frac{1}{2}\gamma_0R)$. $\Omega_i$ is a Stein manifold.

Let $\epsilon''>0$ be a small constant depending only on $n, v, r(x)$.
For any point $y$ in $B(x_i, R)$, there exists $\frac{d}{10}>r_y>0$ with \begin{equation}\label{77}d_{GH}(B(y, r_y), B_{X_y}(o_y, r_y))<\epsilon'' r_y.\end{equation} Here $(X_y, o_y)$ is a metric cone. We may assume $\epsilon''$ and $r_y$ are so small that corollary \ref{cor-3} can be applied. Now we freeze the value of $\epsilon''$.
By Gromov compactness theorem, we may also assume 
\begin{equation}\label{78}\frac{d}{10}>r_y>c(n, v, d, r(x))>0.\end{equation}
Thus for $j=1, 2$, there exist $N = N(v, n, r(x))\in\mathbb{N}, 1 >\delta_1> 5\delta_2>c(v, n, r(x))>0$ and holomorphic functions 
$g_{ij}^1, ..., g_{ij}^N$ on $B(y_i^j, \delta_1r_{y^j_i})$ with $g_{ij}^s(y_i^j) = 0$ and \begin{equation}\label{79}\min\limits_{z\in\partial B(y_i^j, \frac{1}{3}\delta_1r_{y_i^j})}\sum\limits_{s=1}^N|g_{ij}^s(z)|^2> 2\sup\limits_{z\in B(y_i^j, \delta_2r_{y_i^j})}\sum\limits_{s=1}^N|g_{ij}^s(z)|^2.\end{equation} Furthermore, for all $s$, \begin{equation}\label{80}\frac{\sup\limits_{z\in B(y_i^j, \frac{1}{2}\delta_1r_{y_i^j})}|g_{ij}^s(z)|^2}{\sup\limits_{z\in B(y^j_i, \frac{1}{3}\delta_1r_{y_i^j})}|g^s_{ij}(z)|^2}\leq C(n, v, r(x)).\end{equation}

 By normalization, we can also assume \begin{equation}\label{81}\max_s\sup\limits_{z\in B(y_i^j, \delta_2r_{y_i^j})}|g^s_{ij}(z)| = 2.\end{equation} 

Note by three circle theorem \ref{thm-3} and (\ref{80}), \begin{equation}\label{82}\max\limits_{z\in B(y_i^j, \frac{1}{2}\delta_1r_{y_i^j})}|g^s_{ij}(z)| \leq C(n, v, r(x)).\end{equation} Set \begin{equation}\label{83}F^j_i = \sum\limits_{s=1}^N|g^s_{ij}|^2.\end{equation} Let $\lambda$ be a standard cut-off function: $\mathbb{R}^+\to\mathbb{R}^+$ given by $\lambda(t) = 1$ for $0\leq t\leq 1$; $\lambda(t) = 0$ for $t\geq 2$; $|\lambda'|, |\lambda''|\leq C(n)$.
Consider \begin{equation}\label{84}v^j_i(z) = 4n\log F^j_i(z)\lambda(F^j_i(z))\end{equation} on $B(y^j_i, \frac{1}{3}\delta_1r_{y^j_i})$. By (\ref{79}) and (\ref{81}), $v^j_i$ is compactly supported on $B(y^j_i, \frac{1}{3}\delta_1r_{y^j_i})$. We extend it to zero outside.
\begin{lemma}\label{lm4} \begin{equation}\label{85}\sqrt{-1}\partial\overline\partial v^j_i(z)\geq -C(n, v, R, r(x), d)\omega_i\end{equation} where $\omega_i$ is the K\"ahler metric on $M_i$. Moreover, $e^{-v^j_i}$ is not locally integrable at $y^j_i$.\end{lemma}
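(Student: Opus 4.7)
The plan is to analyze $v^j_i=4n\log F^j_i\cdot\lambda(F^j_i)$ on the three regions determined by the cut-off: $\{F^j_i\le 1\}$ where $\lambda\equiv 1$, the transition annulus $\{1\le F^j_i\le 2\}$, and $\{F^j_i\ge 2\}$ where $v^j_i\equiv 0$ and the Hessian is trivially zero. On $\{F^j_i\le 1\}$ one has $v^j_i=4n\log\sum_s|g^s_{ij}|^2$, and the standard Cauchy--Schwarz computation for a squared norm of a holomorphic vector gives $\sqrt{-1}\partial\overline\partial\log F^j_i\ge 0$, so $\sqrt{-1}\partial\overline\partial v^j_i\ge 0$ there.

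On the transition annulus I would expand $\sqrt{-1}\partial\overline\partial v^j_i$ by the Leibniz rule with $u=4n\log F^j_i$ and $w=\lambda(F^j_i)$. The leading term $4n\lambda(F^j_i)\sqrt{-1}\partial\overline\partial\log F^j_i$ remains nonnegative, while the remaining terms are bilinear in $\partial F^j_i$, $\overline\partial F^j_i$, $\partial\overline\partial F^j_i$ with bounded coefficients ($|\log F^j_i|\le\log 2$, and $|\lambda'|,|\lambda''|\le C(n)$). By \eqref{79}--\eqref{81} the annulus lies inside $B(y^j_i,\tfrac13\delta_1 r_{y^j_i})$, where \eqref{82} supplies $|g^s_{ij}|\le C(n,v,r(x))$. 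Applying Cheng--Yau's gradient estimate on a slightly smaller ball, valid because $BK\ge-1$ implies $Ric\ge-(n+1)g$, together with the scale lower bound $r_{y^j_i}\ge c(n,v,d,r(x))$ from \eqref{78}, yields uniform bounds on $|dg^s_{ij}|$ and hence on $|\partial F^j_i|$ and $|\partial\overline\partial F^j_i|$ throughout the annulus. Summing these estimates produces the desired lower bound $\sqrt{-1}\partial\overline\partial v^j_i\ge -C(n,v,r(x),R,d)\,\omega_i$.

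For the non-integrability, the holomorphic functions $g^s_{ij}$ all vanish at $y^j_i$, so smoothness gives $|g^s_{ij}(z)|\le C\,\rho(z,y^j_i)$ locally, whence $F^j_i(z)\le C\rho(z,y^j_i)^2$ on a small ball. On a sufficiently small such ball $F^j_i\le 1$, so $\lambda(F^j_i)\equiv 1$ and $v^j_i=4n\log F^j_i$; therefore $e^{-v^j_i}=(F^j_i)^{-4n}\ge c\,\rho(\cdot,y^j_i)^{-8n}$. Since $M_i$ has real dimension $2n$, volume comparison gives a lower bound of order $\rho^{2n-1}d\rho\,d\sigma$ on the volume element near $y^j_i$, and $\int_0^\epsilon\rho^{-8n}\cdot\rho^{2n-1}\,d\rho=\int_0^\epsilon\rho^{-6n-1}\,d\rho$ diverges for every $n\ge 1$, so $e^{-v^j_i}$ is not locally integrable at $y^j_i$.

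The only step requiring care is the bookkeeping of constants on the transition annulus; the key non-trivial inputs are the Cauchy--Schwarz identity for plurisubharmonicity of $\log F^j_i$ and the Cheng--Yau gradient estimate combined with \eqref{82} and \eqref{78}. Once the containment of $\{F^j_i\le 2\}$ in $B(y^j_i,\tfrac13\delta_1 r_{y^j_i})$ is verified from \eqref{79} and \eqref{81}, the remainder of the argument is mechanical.
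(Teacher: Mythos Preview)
Your proposal is correct and follows exactly the standard argument the paper has in mind (the paper itself skips the details, referring to lemma~1 in \cite{[L2]} and noting that \eqref{78} is crucial). You correctly identify the three regions, the plurisubharmonicity of $\log F^j_i$ on $\{F^j_i\le 1\}$, the containment $\{F^j_i\le 2\}\subset B(y^j_i,\tfrac13\delta_1 r_{y^j_i})$ from \eqref{79} and \eqref{81}, and---most importantly---the role of the scale lower bound \eqref{78} in converting the Cheng--Yau gradient estimate (applied to the harmonic functions $g^s_{ij}$ with sup bound \eqref{82}) into a uniform constant $C(n,v,r(x),d)$ controlling $|\partial F^j_i|$ and $\sqrt{-1}\partial\overline\partial F^j_i$ on the transition annulus; the non-integrability argument via $F^j_i\le C\rho^2$ and the dimension count is also correct.
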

\begin{proof}
The proof is similar to lemma $1$ in \cite{[L2]}. We skip it here. Note (\ref{78}) is crucial.
\end{proof}

 Therefore, there exists $\xi = \xi(n, v, R, d, r(x))>0$ with \begin{equation}\label{86}\sqrt{-1}\partial\overline\partial\psi_i\geq 5(n+1)\omega_i\end{equation} on $\Omega_i$, where 
 $\psi_i = \xi h_i+v^1_i+v^2_i$. Then $\psi_i\leq C(n, v, r(x), d, R)$.
 
Now consider a function $\mu_i(z) = 1$ for $z\in B(y_i^1, \frac{d}{4})$; $\mu_i$ has compact support in $B(y_i^1, \frac{d}{2})$; $|\nabla\mu_i|\leq C(n, d)$. By theorem \ref{thm-2}, we can solve the equation $\overline\partial w_i = \overline\partial \mu_i$ in $\Omega_i$ (defined below (\ref{76})) with \begin{equation}\label{87}\int_{\Omega_i}|w_i|^2e^{-\psi_i}\leq \int_{\Omega_i}|\overline\partial \mu_i|^2e^{-\psi_i}\leq C(n, v, R, r(x), d).\end{equation} Set $f_i = \mu_i - w_i$ on $B(x, 3\gamma_1R)$. By lemma \ref{lm4}, $w_i(y_i^1) = w_i(y_i^2) = 0$.  As $\mu_i(y^1_i) = 1, \mu_i(y^2_i) = 0$, $f_i(y^1_i) = 1, f_i(y^2_i) = 0$.
Note $\int_{\Omega_i}|f_i|^2\leq 2\int_{\Omega_i}(|\mu_i|^2+|w_i|^2)\leq C(n, v, R, d, r(x))$.
 By mean value inequality, $|f_i|\leq C(n, v, R, d, r(x))$ on $B(x_i, 2\gamma_1R)$. 
\end{proof}

\section{Construction of local coordinates on the limit space}
Recall $\mathcal{W}\mathcal{E}_{2n-2} = \{x\in M_\infty|$there exists a tangent cone splitting off $\mathbb{R}^{2n-2}\}$.
For $x\in \mathcal{W}\mathcal{E}_{2n-2}$,  let $C_x$ be a tangent cone at $x$ which splits off $\mathbb{R}^{2n-2}$.  Then \begin{equation}\label{88}C_x(0, o) = (\mathbb{R}^{2n-2}, 0)\times (Z_x, o)\end{equation} where $Z_x$ is a real two dimensional cone with cone angle $\alpha$ satisfying $2\pi\geq \alpha\geq \gamma$. Here $\gamma=\gamma(r(x), v, n)>0$, $r(x)=d(x, p_\infty)$. For sufficiently large $i$, we can find $\epsilon_2>0, 1>>r'_x>0, x_i\in M_i, x_i\to x$ and \begin{equation}\label{89}d_{GH}(B(x_i, r'_x), B_{C_x}((0, o), r'_x))<\epsilon_2 r'_x\end{equation} so that the conditions of proposition \ref{prop4} are satisfied. Let $\gamma_1=\gamma_1(n, v, r(x))>0$ be the constant in proposition \ref{prop4}.
It is straightforward to see that 
\begin{equation}\label{-10}
d_{GH}(B(x_i, \gamma_1r'_x), B_{C_x}((0, o), \gamma_1r'_x))<10\epsilon_2 r'_x
\end{equation}

By shrinking the values of $r'_x$ and $\epsilon_2$ if necessary, we may assume that corollary \ref{cor-2} can be applied to $B(x_i, \gamma_1r'_x)$.
Then there exists a holomorphic chart $(w^i_{x1}, ..., w^i_{xn})$ on $B(x_i, \delta_0\gamma_1r'_x)$ for $\delta_0 = \delta_0(n, v, r(x))<<1$. Also \begin{equation}\label{90}|w^i_{xs}|\leq C(n, v, r(x))r'_x.\end{equation} Gradient estimate says \begin{equation}\label{-11}
|dw^i_{xs}|\leq C(n, v, r(x))\end{equation} on $B(x_i, \frac{5}{6}\delta_0\gamma_1r'_x)$. 
Now Arzela-Ascoli theorem implies that a subsequence of $w^i_{xs}$ converges uniformly to $w^\infty_{xs}$ on $B(x, \frac{5\delta_0\gamma_1r'_x}{6})$. 

\begin{lemma}\label{lm5}
$(w^\infty_{x1}, ..., w^\infty_{xn})$ is injective on $B(x, \frac{4}{5}\delta_0\gamma_1r'_x)$. 
\end{lemma}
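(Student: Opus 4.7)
The approach is a proof by contradiction, using the separation-of-points statement from Proposition~\ref{prop4}. Assume $y^1\neq y^2 \in B(x,\tfrac{4}{5}\delta_0\gamma_1 r'_x)$ satisfy $w^\infty_{xs}(y^1)=w^\infty_{xs}(y^2)$ for every $s=1,\dots,n$, set $d:=d(y^1,y^2)>0$, and pick lifts $y^j_i\to y^j$ in $M_i$ for $j=1,2$. The goal is to produce a bounded holomorphic function on a fixed neighbourhood in $\mathbb{C}^n$ that assigns two different values to the single image point $z_\infty:=w^\infty_x(y^1)=w^\infty_x(y^2)$.

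First, after possibly shrinking $r'_x$ and $\epsilon_2$ so that the hypotheses of Proposition~\ref{prop4} are satisfied with $R=r'_x$, for all sufficiently large $i$ we obtain a holomorphic function $f_i$ on $B(x_i, 2\gamma_1 r'_x)$ with $f_i(y^1_i)=0$, $f_i(y^2_i)=1$, and $|f_i|\leq C(n,v,r(x),d,r'_x)$ uniformly in $i$. On the other hand, from the end of the proof of Proposition~\ref{prop2} (cf.\ Corollary~\ref{cor-1}), after possibly further shrinking $\delta_0$ we may assume that $w^i_x=(w^i_{x1},\dots,w^i_{xn})$ is a biholomorphism from an open set $\Omega_i\supset B(x_i, \tfrac{4}{5}\delta_0\gamma_1 r'_x)$ onto $U_i:=w^i_x(\Omega_i)\subset\mathbb{C}^n$. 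Since $y^j_i\in\Omega_i$ eventually, this produces a bounded holomorphic function
\[
F_i := f_i\circ (w^i_x)^{-1}\colon U_i\to\mathbb{C},\qquad \sup_{U_i}|F_i|\leq C.
\]

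Next, I would pass to the limit. Using the image-containment statements of Corollary~\ref{cor-2} together with the Cheng--Yau gradient estimates on $w^i_{xs}$ (which give uniform Lipschitz control of $(w^i_x)^{-1}$ on slightly smaller balls), one checks that a fixed open ball $V\ni z_\infty$ is contained in $U_i$ for every large $i$. The sequence $\{F_i\}$ is uniformly bounded and holomorphic on $V$, so the Cauchy integral formula yields uniform $C^k$ bounds on compact subsets of $V$, and a subsequence converges locally uniformly to a holomorphic limit $F$ on $V$. Since $w^i_x(y^j_i)\to z_\infty$ and $F_i\to F$ locally uniformly near $z_\infty$,
\[
F(z_\infty)=\lim_{i\to\infty} F_i\bigl(w^i_x(y^1_i)\bigr)=\lim_{i\to\infty} f_i(y^1_i)=0,
\]
while the same computation with $y^2_i$ gives $F(z_\infty)=1$, the desired contradiction.

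The main obstacle is the domain convergence in the last step: one must ensure that the open sets $U_i\subset\mathbb{C}^n$ share a common open neighbourhood of $z_\infty$ on which to extract a uniform holomorphic limit. This relies on the explicit image description in Corollary~\ref{cor-2} together with a uniform lower bound on the size of the biholomorphism ball of $w^i_x$; once this domain issue is settled, the remainder is a standard normal-families argument combining the $L^\infty$-bound on $f_i$ with H\"ormander's $L^2$-estimate implicit in the construction of the separating function.
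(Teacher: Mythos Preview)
Your argument is correct and follows essentially the same route as the paper: your $F_i$ is exactly the paper's $g_i$, and the contradiction $F(z_\infty)=0=1$ is the same one reached there via $f_\infty=g_\infty(w^\infty_{x1},\dots,w^\infty_{xn})$. The one place where the paper is sharper is the ``domain obstacle'' you flag at the end: rather than invoking Lipschitz control of $(w^i_x)^{-1}$ (which the Cheng--Yau estimate does not directly give), the paper simply uses the two image-containment clauses of Corollary~\ref{cor-2}, namely that $w^i_x\bigl(B(x_i,\delta_0\gamma_1 r'_x)\bigr)\supset K_{\frac{9}{10}\delta_0\gamma_1 r'_x}$ and $w^i_x\bigl(B(x_i,\tfrac{4}{5}\delta_0\gamma_1 r'_x)\bigr)\subset K_{\frac{8}{9}\delta_0\gamma_1 r'_x}$, so that all $g_i$ are defined on the fixed domain $K_{\frac{9}{10}\delta_0\gamma_1 r'_x}$ and the limit point $z_\infty$ sits in the compactly contained subset $K_{\frac{8}{9}\delta_0\gamma_1 r'_x}$; the Cauchy estimate and normal-families extraction then go through exactly as you describe.
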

\begin{proof}
Assume $q_1\neq q_2\in B(x, \frac{4}{5}\delta_0\gamma_1r'_x)$ and $w^\infty_{xs}(q_1) = w^\infty_{xs}(q_2)$ for $1\leq s\leq n$. Let $d= d(q_1, q_2)>0$. Consider sequences $M_i\ni q^i_1\to q_1, M_i\ni q^i_2\to q_2$. We may assume $d(q^i_1, q^i_2)>\frac{d}{2}>0$. According to proposition \ref{prop4}, we find $f_i$ holomorphic on $B(x_i, 2\gamma_1r'_x)$ with \begin{equation}\label{91}f_i(q^i_1) = 0; f_i(q^i_2) = 1; |f_i|\leq C(n, r'_x, v, r(x), d).\end{equation} As $w^i_{xs}$ is a holomorphic chart on $B(x_i,\delta_0\gamma_1r'_x)$, we may write $f_i(z) = g_i(w^i_{x1}(z), ..., w^i_{xn}(z))$ on $B(x_i, \delta_0\gamma_1r'_x)$.

By corollary \ref{cor-2}, the image of $(w^i_{x1}, ..., w^i_{xn})$ contains $K_{\frac{9}{10}\delta_0\gamma_1r'_x}$ in $\mathbb{C}^n$. Then $g_i$ is well defined on $K_{\frac{9}{10}\gamma_1\delta_0r'_x}$. From the standard Cauchy integral estimate, we have
\begin{claim}\label{cl7}
$|\frac{\partial g_i}{\partial w^i_{xs}}|\leq C(n, v, r'_x, r(x), d)$ on $K_{\frac{8\delta_0\gamma_1r'_x}{9}}$. In particular, $g_i$ has a convergent subsequence. Also note by corollary \ref{cor-2},
$(w^i_{x1}, ..., w^i_{xn})(B(x_i, \frac{4}{5}\delta_0\gamma_1r'_x))\subset K_{\frac{8}{9}\gamma_1\delta_0r'_x}$.
\end{claim}

On the one hand, $f_i$ has a convergent subsequence, say $f_i\to f_\infty$ uniformly on $B(x, \frac{3\gamma_1r'_x}{2})$. Therefore, $f_\infty(q_1) = 0, f_\infty(q_2) = 1$.
On the other hand, by claim \ref{cl7} and that $w^i_{xs}$ are convergent, after taking further subsequence, $f_i=g_i(w^i_{x1}, ... , w^i_{xn})$ converges uniformly to $f_\infty=g_\infty(w^\infty_{x1}, ..., w^\infty_{xn})$ on $B(x, \frac{4}{5}\delta_0\gamma_1r'_x)$. Then $$f_\infty(q_1) =g_\infty(w^\infty_{x1}(q_1), ..., w^\infty_{xn}(q_1)) = g_\infty(w^\infty_{x1}(q_2), ..., w^\infty_{xn}(q_2)) = f_\infty(q_2).$$
This is a contradiction.

\end{proof}
Let 
$\Omega_\infty = (w^\infty_{x1}, ..., w^\infty_{xn})^{-1}(K_{\frac{1}{2}\delta_0\gamma_1r'_x}).$
\begin{claim}\label{dl-100}
$(w^\infty_{x1}, ..., w^\infty_{xn})$ is a homeomorphism from $\Omega_\infty$ to $K_{\frac{1}{2}\delta_0\gamma_1r_x}$. 
\end{claim}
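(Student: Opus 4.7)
The plan is to realize $\Omega_\infty$ as a pointed Hausdorff limit of model sets in the $M_i$ and to transfer the homeomorphism property from the approximating biholomorphisms. In each $M_i$, I would consider the set $V_i := (w^i_x)^{-1}(K_{\frac{1}{2}\delta_0\gamma_1 r'_x})$ inside the chart $B(x_i,\delta_0\gamma_1 r'_x)$. Corollary \ref{cor-2} gives both $w^i_x(B(x_i,\tfrac{1}{3}\delta_0\gamma_1 r'_x))\subset K_{\frac{5}{12}\delta_0\gamma_1 r'_x}$ and $K_{\frac{1}{2}\delta_0\gamma_1 r'_x}\subset w^i_x(B(x_i,\tfrac{2}{3}\delta_0\gamma_1 r'_x))$; combined with the injectivity of $w^i_x$ on $B(x_i,\delta_0\gamma_1 r'_x)$ this shows $V_i$ is a compact subset of $\overline{B(x_i,\tfrac{2}{3}\delta_0\gamma_1 r'_x)}$ and that $w^i_x|_{V_i}$ is itself a homeomorphism onto $K_{\frac{1}{2}\delta_0\gamma_1 r'_x}$.

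After passing to a subsequence, $V_i$ converges in the Hausdorff sense inside $M_\infty$ to a compact set $V_\infty\subset\overline{B(x,\tfrac{2}{3}\delta_0\gamma_1 r'_x)}\subset B(x,\tfrac{4}{5}\delta_0\gamma_1 r'_x)$. The uniform convergence $w^i_x\to w^\infty_x$ on compact subsets of the chart gives $V_\infty\subset\Omega_\infty$ immediately, and surjectivity of $W|_{V_\infty}$ onto $K_{\frac{1}{2}\delta_0\gamma_1 r'_x}$ follows by lifting any $\zeta$ to a preimage $z_i\in V_i\subset B(x_i,\tfrac{2}{3}\delta_0\gamma_1 r'_x)$ via corollary \ref{cor-2} and passing to a limit $z_\infty\in V_\infty$. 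The same lifting argument, together with the injectivity of $W$ on $B(x,\tfrac{4}{5}\delta_0\gamma_1 r'_x)$ from lemma \ref{lm5}, yields the reverse inclusion $\Omega_\infty\subset V_\infty$: given $y\in\Omega_\infty$ with $W(y)\in K_{\frac{1}{2}\delta_0\gamma_1 r'_x}$, lift to $y'\in V_\infty$ with $W(y')=W(y)$, and injectivity forces $y=y'$. Once $\Omega_\infty=V_\infty$, the map $W|_{\Omega_\infty}$ is a continuous bijection from a compact space to the Hausdorff space $K_{\frac{1}{2}\delta_0\gamma_1 r'_x}$, hence a homeomorphism.

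The main obstacle is the last injectivity step: the domain on which $W$ is defined is $B(x,\tfrac{5}{6}\delta_0\gamma_1 r'_x)$, strictly larger than the ball $B(x,\tfrac{4}{5}\delta_0\gamma_1 r'_x)$ on which lemma \ref{lm5} supplies injectivity. An a priori point $y\in\Omega_\infty$ lying in the annular region $\tfrac{4}{5}\delta_0\gamma_1 r'_x\leq d(\cdot,x)<\tfrac{5}{6}\delta_0\gamma_1 r'_x$ cannot be excluded by lemma \ref{lm5} alone. To close this gap I would either slightly shrink the domain on which $W$ is constructed so that it coincides with the injectivity ball of lemma \ref{lm5}, or re-run the proof of lemma \ref{lm5} on the larger ball $B(x,\tfrac{5}{6}\delta_0\gamma_1 r'_x)$ after verifying that $w^i_x$ still maps this ball into a fixed compact subset of the chart image — a minor modification of the proof of corollary \ref{cor-2} — so that the Cauchy-integral gradient bound used in lemma \ref{lm5} goes through unchanged.
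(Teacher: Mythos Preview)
Your approach is correct in spirit and uses the same ingredients as the paper, but the ``main obstacle'' you flag is not actually an obstacle, and the detour through the Hausdorff limit $V_\infty$ is unnecessary. You already derived the key containment $(w^i_x)^{-1}(K_{\frac{1}{2}\delta_0\gamma_1 r'_x})\subset B(x_i,\tfrac{2}{3}\delta_0\gamma_1 r'_x)$ from corollary \ref{cor-2} plus injectivity of $w^i_x$ on the full chart ball. The paper applies this containment \emph{directly} to points of $\Omega_\infty$: if $y\in\Omega_\infty\subset B(x,\tfrac{5}{6}\delta_0\gamma_1 r'_x)$, take $y_i\to y$; then $w^i_x(y_i)\to w^\infty_x(y)\in K_{\frac{1}{2}\delta_0\gamma_1 r'_x}$, so for large $i$ the point $y_i$ lies in $(w^i_x)^{-1}(K_{\frac{1}{2}\delta_0\gamma_1 r'_x})\subset B(x_i,\tfrac{2}{3}\delta_0\gamma_1 r'_x)$, whence $y\in B(x,\tfrac{3}{4}\delta_0\gamma_1 r'_x)$. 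This places $\Omega_\infty$ well inside the injectivity ball of lemma \ref{lm5}, and the annular region you worry about is simply empty.

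With that containment in hand, the paper's argument is shorter than yours: injectivity on $\Omega_\infty$ is immediate from lemma \ref{lm5}, surjectivity is your lifting argument, and the homeomorphism statement follows since $w^\infty_x$ restricted to the compact set $\overline{B(x,\tfrac{3}{4}\delta_0\gamma_1 r'_x)}$ is a continuous injection into a Hausdorff space. Your proposed fixes (shrinking the domain of $W$, or re-running lemma \ref{lm5} on a larger ball) would both work but are superfluous; the information you need was already sitting in the line where you bounded $V_i$.
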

\begin{proof}
$\Omega_\infty$ is open in $M_\infty$, as $(w^\infty_{x1}, ..., w^\infty_{xn})$ is continuous.
According to corollary \ref{cor-2}, $(w^i_{x1}, ..., w^i_{xn})^{-1}(K_{\frac{1}{2}\delta_0\gamma_1r'_x})\subset B(x_i, \frac{2}{3}\delta_0\gamma_1r'_x)$. 
Then $\Omega_\infty\subset B(x, \frac{3}{4}\delta_0\gamma_1r'_x)$. Lemma \ref{lm5} implies that
$(w^\infty_{x1}, ..., w^\infty_{xn})$ is injective on $\Omega_\infty$. It suffices to prove the surjectivity. For any $y\in K_{\frac{1}{2}\delta_0\gamma_1r_x}$,
let $z_i = (w^i_{x1}, ..., w^i_{xn})^{-1}(y)\in B(x_i, \frac{2}{3}\delta_0\gamma_1r'_x)$. We may assume a subsequence of $z_i$ converges to $z\in B(x, \frac{3}{4}\gamma_1\delta_0r'_x)$. Then $y= (w^\infty_{x1}(z), ..., w^\infty_{xn}(z))$. This concludes the proof.
\end{proof}
Corollary \ref{cor-2} says $(w^i_{x1}, ..., w^i_{xn})(B(x_i, \frac{1}{3}\gamma_1\delta_0r'_x))\subset K_\frac{5\delta_0\gamma_1r_x'}{12}$. Therefore $$B(x, \frac{1}{3}\gamma_1\delta_0r'_x)\subset\Omega_\infty.$$ We conclude that $(w^\infty_{x1}, ..., w^\infty_{xn})$ is a coordinate system on $B(x, \frac{1}{3}\gamma_1\delta_0r'_x)$.
Let \begin{equation}\label{-12}\tilde\gamma_1 = \gamma_1\delta_0.\end{equation} Note $\tilde\gamma_1$ depends only on $n, v, r(x)$. Set
\begin{equation}\label{92}G = \bigcup\limits_{x\in\mathcal{W}\mathcal{E}_{2n-2}}B(x, \frac{1}{5}\tilde\gamma_1r'_x).\end{equation} Then $G$ is open. The complement has codimension at least $4$ by theorem \ref{thm-1} and theorem \ref{thm-4}.
Take a locally finite covering of $G$, say \begin{equation}\label{93}G =\bigcup_{j\in\mathbb{N}}B(x^j, \frac{\tilde\gamma_1r'_{x^j}}{5}).\end{equation} By taking a subsequence, we may assume that $w^i_{x^j_is}$ converge to $w^\infty_{x^js}$ for $j\in\mathbb{N}$. Here $M_i\ni x^j_i\to x^j$.
\begin{claim}\label{cl8}
$(w^\infty_{x^j1}, ..., w^\infty_{x^jn})$ form a holomorphic atlas on $G$.
\end{claim}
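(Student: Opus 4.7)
The plan is to show that for any two overlapping open sets $U_j := B(x^j, \tfrac{1}{5}\tilde\gamma_1 r'_{x^j})$ and $U_k := B(x^k, \tfrac{1}{5}\tilde\gamma_1 r'_{x^k})$, the transition map
\[
\Phi_{kj} := (w^\infty_{x^k 1},\dots,w^\infty_{x^k n}) \circ (w^\infty_{x^j 1},\dots,w^\infty_{x^j n})^{-1}
\]
is a biholomorphism between open subsets of $\mathbb{C}^n$. Since claim \ref{dl-100} has already shown that each chart is a homeomorphism onto an open subset of $\mathbb{C}^n$, this is exactly what remains. By a diagonal extraction over the countable cover \eqref{93}, I pass to a single subsequence on which $w^i_{x^j_i s} \to w^\infty_{x^j s}$ uniformly on compact subsets of $B(x^j, \tfrac{4}{5}\tilde\gamma_1 r'_{x^j})$ for every $j,s$ simultaneously.

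On each approximating manifold $M_i$, corollary \ref{cor-2} asserts that both $(w^i_{x^j_i 1},\dots,w^i_{x^j_i n})$ and $(w^i_{x^k_i 1},\dots,w^i_{x^k_i n})$ are \emph{genuine} holomorphic biholomorphisms from metric balls onto Euclidean-type domains $K_r$. Therefore on the open subset of $K_{\frac{9}{10}\tilde\gamma_1 r'_{x^j}}$ that is the image of the overlap of the two chart domains, the transition
\[
F^i := (w^i_{x^k_i 1},\dots,w^i_{x^k_i n}) \circ (w^i_{x^j_i 1},\dots,w^i_{x^j_i n})^{-1}
\]
is a holomorphic map into $\mathbb{C}^n$. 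The uniform sup bound $|w^i_{x^k_i s}| \leq C(n,v,r(x^k))$ from \eqref{90} makes the components of $F^i$ uniformly bounded, and Cauchy's integral formula on small polydiscs then gives uniform bounds on every derivative on fixed compact subsets. Montel's theorem supplies a locally uniform subsequential limit $F^\infty$, which is holomorphic.

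I would then pass to the limit in the identity $(w^i_{x^k_i 1},\dots,w^i_{x^k_i n}) = F^i \circ (w^i_{x^j_i 1},\dots,w^i_{x^j_i n})$ along any sequence $y_i \in M_i$ with $y_i \to y \in U_j \cap U_k$. Using the uniform convergence of both families of charts, this yields
\[
(w^\infty_{x^k 1},\dots,w^\infty_{x^k n})(y) = F^\infty\bigl((w^\infty_{x^j 1},\dots,w^\infty_{x^j n})(y)\bigr)
\]
on $U_j \cap U_k$, realizing the transition $\Phi_{kj}$ as the restriction of the holomorphic $F^\infty$. Interchanging the roles of $j$ and $k$ gives the holomorphic inverse, so the transition is a biholomorphism.

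The step I expect to be the main obstacle is matching the approximating chart domains on $M_i$ with the overlap $U_j \cap U_k$ on $M_\infty$: the two charts on $M_i$ come from independent $\overline\partial$-constructions whose natural domains are slightly different, so some Gromov-Hausdorff bookkeeping is required to ensure that for large $i$ every compact subset of the limit overlap is contained in the common domain of both $M_i$-charts, and that $F^i$ is well defined on a common open set independent of $i$. The quantitative inclusions provided by corollary \ref{cor-2} (of the form $K_{c\delta_0 r}$ inside the image of a ball of radius $c'\delta_0 r$) together with the Gromov-Hausdorff closeness control give precisely the room needed. Once this bookkeeping is in place, the Montel/Cauchy argument above runs without further difficulty.
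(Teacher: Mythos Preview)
Your proposal is correct and follows essentially the same approach as the paper: the paper's proof also looks at the transition functions on $M_i$, invokes Cauchy estimates (pointing to claim~\ref{cl7}) to get uniform bounds, and concludes that the limits are holomorphic. Your write-up is simply a more detailed expansion of that terse argument, including the Montel/normal-families step and the domain-matching bookkeeping that the paper leaves implicit.
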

\begin{proof}
It suffices to prove the transition functions are holomorphic. One can just look at the transition functions on $M_i$ for charts given by $w^i_{x^j_is}$. By Cauchy estimates as in claim \ref{cl7}, one proves that the transition functions are holomorphic with uniform bound. Thus their limits are still holomorphic.
\end{proof}
From claim \ref{cl8}, $G$ has a holomorphic structure. Let $x\in M_\infty$, $M_i\ni x_i$ and $x_i\to x$. Let $r_x, \epsilon_2>0$ satisfy $d_{GH}(B(x_i, r_x), B_X(o, r_x))<\epsilon_2 r_x$ for some metric cone $(X, o)$. We assume proposition \ref{prop4} is satisfied. We have the following proposition.
\begin{prop}\label{prop-10}
For any $y\in B(x, \frac{1}{2}\gamma_1r_x)\cap G$, there exist $n$ sequences of holomorphic functions $\lambda^i_j(1\leq j\leq n)$ on $B(x_i, \gamma_1r_x)$ so that $\lambda^i_j\to \lambda^\infty_j$ uniformly on $B(x, \frac{1}{2}\gamma_1r_x)$ and $(\lambda^\infty_1, ..., \lambda^\infty_n)$ forms a holomorphic coordinate around $y$.
\end{prop}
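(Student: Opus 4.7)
The plan is to apply H\"ormander's $L^2$-estimate in the spirit of Proposition \ref{prop4} to promote the local chart coordinates near $y$ to holomorphic functions on the larger ball $B(x_i,\gamma_1 r_x)$, using a singular plurisubharmonic weight at $y_i$ strong enough that the correction vanishes together with its first derivative at $y_i$. This preserves the Jacobian in the limit, so the resulting functions still form a holomorphic chart at $y$.

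Since $y\in G$, by (\ref{93}) pick $x^{j'}$ in the locally finite cover with $y\in B(x^{j'},\tilde\gamma_1 r'_{x^{j'}}/5)$, and let $y_i\to y$ with $y_i\in M_i$. The construction leading to Claim \ref{cl8} supplies holomorphic charts $(w^i_{x^{j'}k})_{k=1}^n$ on $B(x^{j'}_i,\delta_0\gamma_1 r'_{x^{j'}})$ converging uniformly to the chart $(w^\infty_{x^{j'}k})$ around $y$. Set $g^i_k:=w^i_{x^{j'}k}-w^i_{x^{j'}k}(y_i)$, so $g^i_k(y_i)=0$, and by the uniform gradient bound (\ref{-11}) one has $F_i:=\sum_k|g^i_k|^2\leq C(n,v,r(x))\,d(\cdot,y_i)^2$ on some definite ball around $y_i$. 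Applying Corollary \ref{cor2} to $(M_i,x_i)$ at scale $r_x$ (after shrinking $\gamma_1$ if necessary so that $\gamma_1<2\delta_5$) yields a smooth $h_i$ with $(h_i)_{\alpha\overline\beta}\geq c(n,v,r(x))g_{\alpha\overline\beta}$, together with a barrier inequality defining a relatively compact Stein domain $\Omega_i\subset B(x_i,\delta_5 r_x)$ that contains $B(x_i,\gamma_1 r_x)$. Take $\mu_i\equiv 1$ on $B(y_i,s)$ with $\mathrm{supp}\,\mu_i\subset B(y_i,2s)\Subset B(x^{j'}_i,\delta_0\gamma_1 r'_{x^{j'}})$ for a small $s=s(n,v,r(x))>0$.

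Following Lemma \ref{lm4}, put $v_i:=4n\log F_i\cdot\chi(F_i)$ with $\chi$ a cut-off as in (\ref{84}). Then $\sqrt{-1}\partial\overline\partial v_i\geq -C(n,v,r(x),r_x)\omega_i$ on $\Omega_i$, while $F_i\leq Cd^2$ yields $e^{-v_i}\gtrsim d(\cdot,y_i)^{-8n}$ near $y_i$; a local power-series expansion in holomorphic coordinates shows that any holomorphic $\eta\in L^2(\Omega_i,e^{-v_i}\omega_i^n)$ vanishes at $y_i$ to order at least $3n+1\geq 2$. Choose $\xi=\xi(n,v,r(x),r_x)$ so that $\psi_i:=\xi h_i+v_i$ satisfies $\sqrt{-1}\partial\overline\partial\psi_i\geq 5(n+1)\omega_i$; since $\overline\partial\mu_i$ is supported in $B(y_i,2s)\setminus B(y_i,s)$, $\psi_i$ is uniformly bounded there. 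For each $k$, apply Theorem \ref{thm-2} on $\Omega_i$ to solve $\overline\partial\eta^i_k=\overline\partial(\mu_i g^i_k)$, obtaining $\int_{\Omega_i}|\eta^i_k|^2 e^{-\psi_i}\omega_i^n\leq C(n,v,r(x),r_x)$; the singular part of the weight then forces both $\eta^i_k(y_i)=0$ and $d\eta^i_k(y_i)=0$. Define $\lambda^i_k:=\mu_i g^i_k-\eta^i_k$, which is holomorphic on $\Omega_i\supset B(x_i,\gamma_1 r_x)$.

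Mean-value and Cauchy estimates on holomorphic functions give uniform $C^0$ and $C^1$ bounds on $\lambda^i_k$ over $B(x_i,\gamma_1 r_x)$, so by Arzela--Ascoli a subsequence converges uniformly on $B(x,\frac{1}{2}\gamma_1 r_x)$ to some $\lambda^\infty_k$. On a neighborhood of $y$ where $\mu_i\equiv 1$ we have $\lambda^\infty_k=w^\infty_{x^{j'}k}-w^\infty_{x^{j'}k}(y)-\eta^\infty_k$ with $\eta^\infty_k(y)=0$ and $d\eta^\infty_k(y)=0$, so the Jacobian of $(\lambda^\infty_1,\dots,\lambda^\infty_n)$ at $y$ agrees with the invertible Jacobian of the chart $(w^\infty_{x^{j'}k})$, and $(\lambda^\infty_1,\dots,\lambda^\infty_n)$ is a holomorphic coordinate around $y$. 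The main obstacle is the bookkeeping of vanishing orders: the construction produces a genuine chart, rather than merely a system of functions separating $y$ from nearby points as in Proposition \ref{prop4}, only because $\eta^i_k$ vanishes to second order at $y_i$, and this in turn depends on having a quadratic upper bound $F_i\leq Cd(\cdot,y_i)^2$ uniform in $i$ --- itself a consequence of the Cheeger--Colding gradient bound (\ref{-11}) applied to the chart coordinates $w^i_{x^{j'}k}$.
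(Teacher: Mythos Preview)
Your proof is correct and follows essentially the same route as the paper's own argument: cut off the local chart functions $w^i_k$ near $y_i$, solve $\overline\partial$ on the Stein domain $\Omega_i$ furnished by the plurisubharmonic function $h_i$ of Proposition~\ref{prop4}, and use a logarithmic weight built from $F_i=\sum_k|w^i_k-w^i_k(y_i)|^2$ to force the correction to vanish to second order at $y_i$, so that the Jacobian is preserved in the limit. The only cosmetic differences are that the paper takes the weight $8n\log F_i$ rather than your $4n\log F_i$ (both give vanishing order $\geq 2$), and that the paper computes the vanishing of $\partial f^i_j/\partial w^i_s$ directly in the chart $(w^i_s)$ rather than via your detour through the metric bound $F_i\leq C\,d(\cdot,y_i)^2$---indeed, in these coordinates $F_i=|w|^2$ exactly, so the Lipschitz estimate (\ref{-11}) is not strictly needed for this step.
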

\begin{proof}
By the definition of $G$, (\ref{90}), (\ref{-11}) and lemma \ref{lm5}, 
we can find a sequence $y_i\in M_i$ with $y_i\to y$ so that the following hold.
\begin{itemize}
\item there exist holomorphic charts $(w^i_1, ..., w^i_n)$ on $B(y_i, 5\delta)$ for some $\delta>0$;
\item $w^i_j\to w^\infty_{j}$ uniformly on 
$B(y, 4\delta)$;
\item $(w^\infty_1, .., w_n^\infty)$ is a holomorphic chart on $B(y, 4\delta)\subset G$;
 \item $|w^i_j|$ is uniformly bounded on $B(y_i, 5\delta)$ for all $i$. Say $|w^i_j|\leq C$;
\item $B(y_i, 10\delta)\subset B(x_i, \frac{1}{2}\gamma_1r_x)$, $B(y, 10\delta)\subset B(x, \frac{1}{2}\gamma_1r_x)$.
\item $w^i_j(y_i) = 0$ for all $i$ and $j$.
\item For sufficiently large $i$, $(w^i_1, ..., w^i_n)(B(y_i, \delta))\supset B_{\mathbb{C}^n}(0, \delta')$ for some $\delta'>0$.
\item  $(w^\infty_1, ..., w^\infty_n)(B(y, \delta))\supset B_{\mathbb{C}^n}(0, \delta')$.
\item There exists $\delta''>0$ with  $(w^i_1, ..., w^i_n)(B(y, \delta''))\subset B_{\mathbb{C}^n}(0, \frac{\delta'}{2})$.
\end{itemize}

Consider smooth cut-off functions $\tau_i$ with $\tau_i = 1$ in $B(y_i, 2\delta)$, $\tau_i$ have compact support in $B(y_i, 3\delta)$, $|\nabla\tau_i|\leq \frac{20}{\delta}$.
Let \begin{equation}\label{-50}h^i_j = \tau_iw^i_j.\end{equation} Note $h^i_j$ is holomorphic on $B(y_i, 2\delta)$. Recall the function $h_i$ in proposition \ref{prop4}(replace $R$ by $r_x$) satisfies (\ref{-3}), (\ref{75}) and (\ref{76}). Also recall the Stein manifold $\Omega_i$ right below (\ref{76}). Let $\lambda$ be a standard cut-off function: $\mathbb{R}^+\to\mathbb{R}^+$ given by $\lambda(t) = 1$ for $0\leq t\leq 1$; $\lambda(t) = 0$ for $t\geq 2$; $|\lambda'|, |\lambda''|\leq C(n)$. Define \begin{equation}\label{-51}F_i = \sum\limits_{j=1}^n|w^i_j|^2.\end{equation} Given a constant $\xi>0$, set
\begin{equation}\label{-52}\Psi_i= \xi h_i+8n\log(F_i)\lambda(\frac{4F_i}{\delta'^2}).\end{equation} Extend $\log(F_i)\lambda(\frac{4F_i}{\delta'^2})$ to zero outside $B(y_i, \delta)$.
Similar as in (\ref{86}), we can find a large constant $\xi$ independent of $i$ with 
\begin{equation}\label{-53}(\Psi_i)_{\alpha\overline\beta}\geq 5(n+1)(g_i)_{\alpha\overline\beta}\end{equation} on $\Omega_i$. Here $g_i$ is the K\"ahler metric on $M_i$.
We solve the $\overline\partial$-problem on $\Omega_i$ \begin{equation}\label{-54}\overline\partial f^i_j =\overline\partial h^i_j\end{equation} satisfying \begin{equation}\label{-55}\int_{\Omega_i}|f^i_j|^2e^{-\Psi_i}\leq \int_{\Omega_i}|\overline\partial h^i_j|^2e^{-\Psi_i} \end{equation}
Below $C_1, C_2, ...$ will be large constants independent of $i$.
It is straightforward to verify that \begin{equation}\label{-56}\int_{\Omega_i}|\overline\partial h^i_j|^2e^{-\Psi_i}\leq C_1.\end{equation}
On $B(y_i, 3\delta)$, we can write $f^i_j = f^i_j(w^i_1, ..., w^i_n)$.  We have \begin{equation}\label{-57}\int_{\Omega_i}|f^i_j|^2e^{-\Psi_i}\leq C_1,\end{equation}
Note for each fixed $i$, the volume form $(\frac{1}{2\sqrt{-1}})^ndw^i_1\wedge\overline{dw^i_1}\wedge\cdot\cdot\cdot\wedge dw^i_n\wedge\overline{dw^i_n}$ is equivalent to the volume form of $g_i$ on $B(y_i, 3\delta)$.
Since $w^i_j(y_i) = 0$, the local integrability near $y_i$ implies \begin{equation}\label{-61}\frac{\partial f^i_j}{\partial w^i_s}(0, ..., 0) = 0\end{equation} for $1\leq j, s\leq n$. Here $(0, .., 0) = (w^i_1(y_i), ..., w^i_n(y_i))$.
Set $\lambda^i_j = h^i_j-f^i_j$ on $\Omega_i\supset B(x, 2\gamma_1r_x)$. Note $h^i_j$ is uniformly bounded. (\ref{-57}) implies that 
\begin{equation}\label{-58}
\int_{\Omega_i}|\lambda^i_j|^2\leq C_2.
\end{equation}
 Mean value inequality implies that $|\lambda^i_j|\leq C_3$ on $B(x_i, \gamma_1r_x)$.
As $\lambda^i_j$ is holomorphic, by taking subsequences, we may assume $\lambda^i_j\to \lambda^\infty_j$ uniformly on $B(x, \frac{1}{2}\gamma_1r_x)$. Recall $h^i_j = w^i_j$ on $B(y_i, 2\delta)$. According to (\ref{-61}), \begin{equation}\label{-59}\frac{\partial\lambda^i_j}{\partial w^i_s}(0, ..., 0) = \delta_{js}. \end{equation}
Letting $i\to\infty$, we obtain that \begin{equation}\label{-60}\frac{\partial\lambda^\infty_j}{\partial w^\infty_s}(0, ..., 0) = \delta_{js}. \end{equation}
This proves that $(\lambda^\infty_1, ..., \lambda^\infty_n)$ forms a holomorphic coordinate around $y$.\end{proof}

\section{Holomorphic functions on limit space}

\begin{definition}
Let $\mathcal{F}$ be the sheaf on $M_\infty$ so that for any open set $U$ of $M_\infty$, $\Gamma(U, \mathcal{F})$ consists of all holomorphic functions on $U\cap G$ which are locally bounded on $U$.  \end{definition}

Below we use the same notions as in proposition \ref{prop4}. We shall replace $R$ by $r_x$. Then, for some metric cone $(X, o)$,
\begin{equation}\label{94}d_{GH}(B(x_i, r_x), B_X(o, r_x))<\epsilon_2 r_x.\end{equation}
\begin{lemma}\label{lm6}
Let $x\in M_\infty$. Consider $x_i\in M_i$ with $x_i\to x$. If $f\in\Gamma(B(x, r_x), \mathcal{F})$, then there exists $f_i$ holomorphic and uniformly bounded on $B(x_i, \frac{\gamma_1}{2}r_x)$ so that $f_i\to f$ uniformly on $B(x, \frac{\gamma_1}{2}r_x)\cap G$. Conversely, if $f_i$ is holomorphic on $B(x_i, r_x)$ and $f_i\to f$ uniformly, then $f_{B(x, \frac{\gamma_1}{2}r_x)}\in\Gamma(B(x, \frac{\gamma_1}{2}r_x), \mathcal{F})$.
\end{lemma}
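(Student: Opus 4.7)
The plan is to prove both directions using proposition \ref{prop-10} combined with Hörmander's $L^2$ technique from theorem \ref{thm-2}.

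\textbf{Forward direction.} Suppose $f \in \Gamma(B(x, r_x), \mathcal{F})$. By local boundedness, $|f| \leq M$ on the closure of $B(x, \tfrac{3\gamma_1}{4} r_x) \cap G$. For each $y \in \overline{B(x, \tfrac{3\gamma_1}{4} r_x)} \cap G$, proposition \ref{prop-10} supplies a neighborhood $U_y$ with holomorphic coordinates $(\lambda^\infty_{y,j})_{j=1}^n$ arising as uniform limits of holomorphic functions $(\lambda^i_{y,j})$ defined on $B(x_i, \gamma_1 r_x)$; consequently $f|_{U_y} = F_y \circ (\lambda^\infty_{y,1},\ldots,\lambda^\infty_{y,n})$ for some holomorphic $F_y$ on an open set of $\mathbb{C}^n$. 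Using that $M_\infty \setminus G$ has Hausdorff codimension at least $4$ (theorems \ref{thm-1} and \ref{thm-4}), I would choose a finite subcover $\{U_\alpha\}_{\alpha=1}^K$ of $\overline{B(x, \tfrac{3\gamma_1}{4} r_x)}$ minus a small $\tau$-tubular neighborhood $V_\tau$ of the singular set, together with a subordinate smooth partition of unity $\{\chi_\alpha\}_{\alpha=0}^K$ on $M_\infty$, where $\chi_0$ is supported in $V_\tau$. After pulling the partition back to $M_i$ via a smoothed Gromov-Hausdorff map, define
$$\tilde f_i \;:=\; \sum_{\alpha \geq 1} \chi_\alpha \cdot F_\alpha(\lambda^i_{\alpha,1},\ldots,\lambda^i_{\alpha,n}),$$
a smooth approximation to $f$ on $M_i$.

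\textbf{Hörmander correction.} Since each $F_\alpha \circ (\lambda^i_{\alpha,j})$ is holomorphic, $\overline{\partial}\tilde f_i = \sum_{\alpha \geq 0} (\overline{\partial}\chi_\alpha)\cdot F_\alpha(\lambda^i_\alpha)$. On any overlap of $U_\alpha$ and $U_\beta$, uniform convergence $\lambda^i_\alpha \to \lambda^\infty_\alpha$ together with $F_\alpha(\lambda^\infty_\alpha) = F_\beta(\lambda^\infty_\beta) = f$ forces $F_\alpha(\lambda^i_\alpha) - F_\beta(\lambda^i_\beta) \to 0$ uniformly, so the sum telescopes up to terms of order $o(1)$ plus the $d\chi_0$ contribution supported in $V_\tau$, whose volume is $O(\tau^4)$ by codimension $4$. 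Working on the Stein domain $\Omega_i$ from proposition \ref{prop4} with the plurisubharmonic weight $\psi_i$ satisfying $\sqrt{-1}\partial\overline{\partial}\psi_i \geq 5(n+1)\omega_i$, theorem \ref{thm-2} solves $\overline{\partial} h_i = \overline{\partial}\tilde f_i$ with $\int_{\Omega_i}|h_i|^2 e^{-\psi_i} \leq \int_{\Omega_i}|\overline{\partial}\tilde f_i|^2 e^{-\psi_i} \to 0$ (as $i \to \infty$ followed by $\tau \to 0$ via a diagonal argument). The function $f_i := \tilde f_i - h_i$ is then holomorphic, uniformly bounded (since $\|\tilde f_i\|_\infty \leq M + o(1)$ and mean-value inequality controls $|h_i|$ on interior balls), and converges uniformly to $f$ on $B(x, \tfrac{\gamma_1}{2} r_x) \cap G$.

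\textbf{Converse direction.} Assume now $f_i$ holomorphic on $B(x_i, r_x)$, uniformly bounded, and $f_i \to f$ uniformly. The limit $f$ is automatically locally bounded. For any $y \in B(x, \tfrac{\gamma_1}{2} r_x) \cap G$, proposition \ref{prop-10} gives holomorphic coordinates $(\lambda^\infty_j)$ arising from $(\lambda^i_j)$. In a neighborhood of $y_i \to y$ in $M_i$, write $f_i = G_i(\lambda^i_1,\ldots,\lambda^i_n)$; Cauchy estimates analogous to claim \ref{cl7} give uniform bounds on the derivatives of $G_i$, so a subsequence converges to a holomorphic limit $G_\infty$. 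Passing to the limit in $f_i = G_i(\lambda^i_1,\ldots,\lambda^i_n)$ yields $f = G_\infty(\lambda^\infty_1,\ldots,\lambda^\infty_n)$ in a neighborhood of $y$, establishing holomorphy of $f$ on the open set $B(x, \tfrac{\gamma_1}{2} r_x) \cap G$.

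\textbf{Main obstacle.} The principal difficulty is in the forward direction, specifically controlling $\int |\overline{\partial}\tilde f_i|^2 e^{-\psi_i}$: one must verify (i) that the local expressions of $f$ in the different coordinate charts match compatibly when pulled back to $M_i$, which relies on uniform convergence of coordinates and on the uniqueness of $f$'s representation, and (ii) that contributions from $d\chi_0$ near the singular set are negligible, which requires the codimension-$4$ bound for $M_\infty \setminus G$. Synchronizing the limits $i \to \infty$ and $\tau \to 0$ via a diagonal argument is the key quantitative step.
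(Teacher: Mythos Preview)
Your overall strategy coincides with the paper's: transplant $f$ to $M_i$ through local holomorphic-coordinate expressions, glue with a partition of unity, cut off near the singular set, and apply a H\"ormander correction on the Stein domain $\Omega_i$. The converse direction is fine and is essentially the paper's argument (which simply refers back to the proof of lemma \ref{lm5}).

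In the forward direction, however, two of your implementation steps are not rigorous as stated and are exactly where the paper does something more careful. First, you propose to pull the partition of unity back to $M_i$ ``via a smoothed Gromov--Hausdorff map.'' A Gromov--Hausdorff approximation is merely an $\epsilon$-isometry between metric spaces; it is not smooth and gives no control on gradients of pullbacks, so one cannot produce the smooth $\chi_\alpha$ on $M_i$ this way. The paper instead writes each $\varphi_j$ explicitly as a smooth function of the limit chart $\varphi_j = \varphi_j(w^\infty_{x^j s},\overline{w^\infty_{x^j s}})$ and then \emph{substitutes} the approximating coordinates to define $\varphi_{ij} = \varphi_j(w^i_{x^j_i s},\overline{w^i_{x^j_i s}})$ on $M_i$ (equations (\ref{95})--(\ref{96})). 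This is the device that makes transplantation of smooth data rigorous. Second, your cutoff $\chi_0$ is a function on $M_\infty$ supported in the tubular neighborhood $V_\tau$ of the singular set, but $M_\infty$ is not a manifold there, so a ``smooth $\chi_0$'' with controlled $|d\chi_0|$ is not available, and the codimension-$4$ Hausdorff bound alone does not give the Minkowski-content/volume estimate you invoke. The paper bypasses this by constructing the cutoff $\beta_i$ \emph{directly on $M_i$} following proposition 3.5 of \cite{[DS]}, obtaining $\int_{B(x_i,r_x)}|\nabla\beta_i|^2\to 0$ as $i\to\infty$; this replaces your $\tau$-diagonal argument entirely. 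A minor further point: the weight you name, $\psi_i$ from proposition \ref{prop4}, carries logarithmic poles (the terms $v^j_i$), which would make $\int|\overline\partial\tilde f_i|^2 e^{-\psi_i}$ infinite in general; the paper uses the smooth weight $Ch_i$ coming from corollary \ref{cor2}.
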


\begin{proof}
Let $f_i$ be holomorphic on $B(x_i, \gamma_1r_x)$ and $f_i\to f$ uniformly on $B(x, \gamma_1r_x)$. One just need to prove $f$ is holomorphic on $G\cap B(x, \frac{1}{2}\gamma_1r_x)$. This follows from the same argument as in lemma \ref{lm5}.

Now assume $f\in\Gamma(B(x, r_x), \mathcal{F})$. We shall use some cut-off argument similar as in \cite{[DS]}. Let $\Sigma = M_\infty\backslash G$ and $\Sigma_i$ be the preimage of $\Sigma$ in $M_i$ by the Gromov-Hausdorff approximation (here is $\Sigma_i$ need not be precisely defined). We are going to transplant $f$ to $B(x_i, \frac{3}{4}r_x)\backslash B(\Sigma_i, d_i))$ for some $d_i\to 0$. By modifying the locally finite covering $B_j = B(x^j, \frac{1}{5}\tilde\gamma_1r'_{x^j})$ in (\ref{93}), we can find a partition of unity of $G$, $\varphi_j$, subordinate to $B_j$,  smooth with respect to holomorphic structure on $G$. On $B_j$, we may write \begin{equation}\label{95}\varphi_j = \varphi_j(w^\infty_{x^j1}, ..., w^\infty_{x^jn}, \overline{w^\infty_{x^j1}}, ..., \overline{w^\infty_{x^jn}}).\end{equation} Define \begin{equation}\label{96}\varphi_{ij} = \varphi_j(w^i_{x_i^j1}, ..., w^i_{x_i^jn}, \overline{w^i_{x_i^j1}}, ..., \overline{w^i_{x_i^jn}}).\end{equation} Here we use the notations right below (\ref{93}). Then on any compact set $K$ of $G$, $\varphi_{ij}\to\varphi_{j}$ uniformly. If we replace $\varphi_{ij}$ by $\frac{\varphi_{ij}}{\sum\limits_{s}\varphi_{is}}$, then $\sum\limits_{j}\varphi_{ij} = 1$ on $K_i$ for sufficiently large $i$. Here $K_i$ is the preimage of $K$ in $M_i$.

$G$ is dense in $B'_j=B(x^j,\frac{1}{3}\tilde\gamma_1r'_{x^j})$. Note by the sentence above (\ref{-12}), there is a holomorphic chart on $B'_j$.  Then $f$ extends to a holomorphic function on $B'_j\cap B(x, r_x)$.  It is clear the extension glues on the intersections of $B'_j$.

On $B'_j\cap B(x, r_x)$, write $f = f^j(w^\infty_{x^j1}, ..., w^\infty_{x^jn})$ where $f^j$ is holomorphic. Define 
\begin{equation}\label{97}f_{ij} = f^j(w^i_{x_i^j1}, ..., w^i_{x_i^jn})\end{equation} 
on $B(x^j_i, \frac{1}{5}\tilde\gamma_1r'_{x_i^j})\cap B(x_i, \frac{3}{4}r_x)$. Note this is well defined for sufficiently large $i$. Also $f_{ij}\to f$ on $B_j\cap B(x, \frac{3}{4}r_x)$.
Now define a function \begin{equation}u_i = \sum\limits_{j}\varphi_{ij}f_{ij}.\end{equation} It is clear $u_i\to f$ uniformly on each compact set of $G\cap B(x, \frac{3}{4}r_x)$.
\begin{claim}\label{cl9}
$|\overline\partial u_i|\to 0$ uniformly on each compact set $K$ of $G\cap B(x, \frac{3}{4}r_x)$.  $|du_i|$ is uniformly bounded on the preimage of $K$ in $M_i$.
\end{claim}
\begin{proof}
By definition, $f_{ij}$ are holomorphic. Let $z\in B_{j_0}$. Consider a sequence $z_i\in B(x^{j_0}_i, \frac{1}{5}\tilde\gamma_1r'_{x_i^{j_0}})$ with $z_i\to z$. Thus \begin{equation}\label{99}\begin{aligned}|\overline\partial(\sum\limits_j\varphi_{ij}(z_i)f_{ij}(z_i))| &= |\sum\limits_{j}f_{ij}(z_i)\overline\partial\varphi_{ij}(z_i)|\\&\leq \sum\limits_{j}|f_{ij}(z_i)-f(z)||\overline\partial\varphi_{ij}(z_i)|+|f(z)||\overline\partial(\sum\limits_{j}\varphi_{ij})(z_i)|\\&\to 0.\end{aligned}\end{equation} The second assertion follows similarly.
\end{proof}

By the same argument as in proposition $3.5$ of \cite{[DS]}, we can find a smooth cut off function $\beta_i$ on $B(x_i, r_x)$, satisfying $1-\beta_i$ has compact support in 
a $\Phi(\frac{1}{i})$-neighborhood of $\Sigma_i$; equals $1$ in a small neighborhood of $\Sigma_i$; $0\leq\beta_i\leq 1$; $\int_{B(x_i, r_x)}|\nabla\beta_i|^2\to 0$. 
We may also assume that $\beta_i\to 1$ sufficiently slow outside $\Sigma$.
Define the function $g_i = u_i\beta_i$. Then we can make that on $B(x_i, \frac{2}{3}r_x)$,
\begin{equation}\label{-15}
|g_i|\leq 2\sup\limits_{B(x, \frac{3}{4}r_x)\cap G}|f|+1
\end{equation}
Routine calculation shows
\begin{claim}\label{cl10}
$\int_{B(x_i, \frac{2}{3}r_x)}|\overline\partial g_i|^2\to 0$. 
\end{claim}

Let the function $h_i$ satisfy (\ref{-3}), (\ref{75}) and (\ref{76}) with $R$ replaced by $r_x$. Let $C=C(n, v, r(x))>0$ satisfy \begin{equation}(Ch_i)_{\alpha\overline\beta}\geq 4(n+1)g_{\alpha\overline\beta}>0.\end{equation}
Let $\Omega_i$ be the connected component of $\{z\in B(x_i, \frac{\gamma_0r_x}{2})|h_i(z)<2\max\limits_{y\in B(x_i, 3\gamma_1r_x)}h_i(y)\}$ containing $B(x_i, 3\gamma_1r_x)$. Then $\Omega_i$ is relatively compact in $B(x_i, \frac{1}{2}\gamma_0r_x)\subset B(x_i, \frac{2}{3}r_x)$ and $\Omega_i$ is a Stein manifold.
Now we solve the $\overline\partial$-problem \begin{equation}\label{100}\overline\partial g'_i =\overline\partial g_i\end{equation} on $\Omega_i\supset B(x_i, 3\gamma_1r_x)$ with 
\begin{equation}\label{101}\int_{\Omega_i}|g'_i|^2e^{-Ch_i}\leq\int_{\Omega_i}|\overline\partial g_i|^2e^{-Ch_i}\to 0.\end{equation} Therefore \begin{equation}\label{-4}\int_{\Omega_i}|g'_i|^2\to 0.\end{equation} Then by (\ref{-15}), the holomorphic function $f_i = g_i-g'_i$ satisfies \begin{equation}\label{102}\int_{B(x, 3\gamma_1r_x)}|f_i|^2\leq C(n, v, r(x))\sup\limits_{B(x, \frac{3}{4}r_x)}(1+|f|^2).\end{equation} Mean value inequality and the gradient estimate imply \begin{equation}\label{103}|df_i|, |f_i|\leq C(n, v, r(x), r_x)\sup\limits_{B(x, \frac{3}{4}r_x)}(1+|f|)\end{equation} on $B(x_i, \gamma_1r_x)$. For any sequence $M_i\ni z_i\to z\in G\cap B(x, \frac{3}{4}\gamma_1r_x)$, $dg_i(z_i) = du_i(z_i)$ for all sufficiently large $i$. Then by claim \ref{cl9}, $|dg'_i(z_i)|\leq |du_i(z_i)|+|df_i(z_i)|$ which has an upper bound independent of $i$. By (\ref{-4}), we obtain that $|g'_i|\to 0$ on each compact set of $G\cap B(x, \gamma_1r_x)$. That is, $f_i\to f$ uniformly on each compact set of $G\cap B(x, \gamma_1r_x)$. The convergence must be uniform on $G\cap B(x, \frac{1}{2}\gamma_1r_x)$, since $f_i$ is bounded and equicontinuous. 
\end{proof}

\begin{cor}\label{cor3}
Let $U$ be an open set of $M_\infty$ and $f\in \Gamma(U, \mathcal{F})$. Then $f$ extends to a continuous function on $U$.
\end{cor}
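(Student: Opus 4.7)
The plan is to extend $f$ locally, one ball at a time, and then observe that the local extensions automatically agree on overlaps because $G$ is dense. Fix a point $x \in U$. Choose $r_x$ small enough that $B(x, r_x) \subset U$ and the hypotheses of Lemma \ref{lm6} are satisfied (in particular, $d_{GH}(B(x_i, r_x), B_X(o, r_x)) < \epsilon_2 r_x$ for some metric cone $(X, o)$ and some approximating sequence $x_i \to x$ with $x_i \in M_i$). By Lemma \ref{lm6}, there exists a sequence of holomorphic functions $f_i$ on $B(x_i, \tfrac{\gamma_1}{2} r_x)$, uniformly bounded in $i$, with $f_i \to f$ uniformly over $B(x, \tfrac{\gamma_1}{2} r_x) \cap G$ under the Gromov-Hausdorff approximation.

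The key point is that since $f_i$ is holomorphic and uniformly bounded on $B(x_i, \tfrac{\gamma_1}{2} r_x)$, the Cheng-Yau gradient estimate (available because $\mathrm{Ric}(M_i) \geq -(n+1)$) gives a uniform bound $|\nabla f_i| \leq C(n, v, r(x), r_x) \sup |f_i|$ on the slightly smaller ball $B(x_i, \tfrac{\gamma_1}{4} r_x)$. Thus the family $\{f_i\}$ is equicontinuous. Via the Gromov-Hausdorff approximations $\Phi_i$, the functions $f_i \circ \Phi_i^{-1}$ form a bounded equicontinuous family on $B(x, \tfrac{\gamma_1}{4} r_x)$, and by the Arzela-Ascoli theorem in the Gromov-Hausdorff setting, they converge (after passing to a subsequence) uniformly on compact subsets to some continuous function $\tilde f$ on $B(x, \tfrac{\gamma_1}{4} r_x)$. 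By Lemma \ref{lm6}, $\tilde f = f$ on $G \cap B(x, \tfrac{\gamma_1}{4} r_x)$, so $\tilde f$ is the desired continuous extension of $f$ on this ball.

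For gluing, note that by Theorem \ref{thm-1} and Theorem \ref{thm-4} the complement $M_\infty \setminus G$ has Hausdorff codimension at least $4$, so $G$ is dense in $U$. Any two continuous extensions of $f|_{U \cap G}$ must therefore coincide, so the local extensions glue unambiguously into a continuous function on all of $U$. The only potential obstacle is ensuring that the limit $\tilde f$ constructed via Arzela-Ascoli does not depend on the choice of subsequence; this is resolved precisely by the density of $G$ together with the fact that every limit agrees with $f$ on $G$ (so the full sequence converges, by a standard subsubsequence argument). No further work is needed since equicontinuity is guaranteed by the uniform $L^\infty$ bound and the gradient estimate. \qed
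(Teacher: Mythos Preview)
Your proof is correct and follows essentially the same approach as the paper. The paper's own proof is a single sentence invoking the first statement of Lemma~\ref{lm6}; you have simply made explicit the equicontinuity/Arzela--Ascoli step and the gluing via density of $G$ that the paper leaves to the reader.
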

\begin{proof}
The problem is local. For $x\in U$, we can find $r_x$ satisfying the conditions of proposition \ref{prop4}($r_x$ replaces $R$) and $B(x, 2r_x)\subset U$. The corollary follows from the first statement of lemma \ref{lm6}. 
\end{proof}

\section{Completion of the proof of theorem \ref{thm1}}
In this section, we shall apply some localized argument in \cite{[DS]}.
Given $x\in M_\infty$, consider a sequence $x_i\in M_i$ converging to $x$. 
We still follow the notations in proposition \ref{prop4} with $R$ replaced by $r_x$. Then, $r_x$ satisfies (\ref{94}). We may also assume $r_x\geq c(n, v, r(x))>0$ by Gromov compactness theorem.

By applying proposition \ref{prop4} and Gromov compactness theorem repeatedly,  we can find some $m=N_0(n, v, r(x))$, $M=M(n, v, r(x))>0$, holomorphic functions $g^s_i$ on $B(x_i, \gamma_1r_x)(1\leq s\leq N_0)$ with
\begin{equation}\label{-16}
g^s_i(x_i) =0; |g^s_i|\leq M(n, v, r(x));\end{equation}
\begin{equation}\label{-17}
\min\limits_{y\in \partial B(x_i, \frac{1}{3}\gamma_1r_x)}(\sum\limits_{s=1}^{m}|g^s_i(y)|^2)^\frac{1}{2}\geq 2.
\end{equation}
This merely means that we separate $\partial B(x_i, \frac{1}{3}\gamma_1r_x)$ from $x_i$.
 Define $F^{m}_i = (g^1_i, ..., g^{m}_i)$. Below we will add more functions. That is, we increase the value $m$. By passing to subsequences, we always assume that $g^s_i\to g^s, F^{m}_i\to F^{m}$ on $B(x, \frac{1}{2}\gamma_1r_x)$.  We also assume that (\ref{-16}) is true for all $m\geq N_0$.
 
Let $|\cdot|$ be the standard norm on $\mathbb{C}^m$. 
By gradient estimate, $|dg^s_i|\leq C(n, v, r(x))$ on $B(x_i, \frac{1}{2}\gamma_1r_x)$. Then there exists $\gamma_2=\gamma_2(n, v, r(x))$ so that 
\begin{equation}\label{-18}
|F^{N_0}_i(y)|< \frac{1}{10}
\end{equation}
for $y\in B(x_i, \gamma_2r_x)$. By applying proposition \ref{prop4} and Gromov compactness theorem again, we find $\tau=\tau(n, v, r(x))>0$ and $N_1=N_1(n, v, r(x))>N_0$ with 
\begin{equation}\label{-19}
|F^{N_1}_i(y)|<\frac{1}{5}, y\in B(x_i, \gamma_2r_x),
\end{equation}
\begin{equation}\label{-20}
 |F^{N_1}_i|\geq 2\tau
\end{equation}
on $B(x_i, \gamma_1r_x)\backslash B(x_i, \frac{1}{2}\gamma_2r_x)$. This can be achieved by rescaling $g^s_i(s>N_0)$ by small factors.
We summarize the constructions above. For $m=N_1$, conditions (a)-(c) are valid: 

(a). $g^s_i(x_i) =0$ and $|g^s_i|\leq M(n, v, r(x))$ on $B(x_i, \gamma_1r_x)$ for $1\leq s\leq m$;

(b). $\min\limits_{y\in \partial B(x_i, \frac{1}{3}\gamma_1r_x)} |F^{m}_i(y)|\geq 2$; $|F^{m}_i(y)|<\frac{1}{5}$ for $y\in B(x_i, \gamma_2r_x)$;

(c). $|F^{m}_i(y)|\geq 2\tau$ for $y\in B(x_i, \frac{1}{3}\gamma_1r_x)\backslash B(x_i, \gamma_2r_x)$.

\medskip

When the value of $m$ increases,
we always assume $F^m_i$ converges to $F^m$ on $B(x, \frac{1}{2}\gamma_1r_x)$, after taking subsequences.
Furthermore, conditions (a)-(c) still hold. We further require that

(d). $|g^s_i|\leq \frac{\tau}{10^s}$ on $B(x_i, \gamma_1r_x)$ for $s>N_1$.

This can be achieved if we rescale functions $g^s_i$ for $s>N_1$.

Let $\Omega'_{im}$ be the connected component of $(F^{m}_i)^{-1}(B_{\mathbb{C}^m}(0, 1))$ containing $B(x_i, \gamma_2r_x)$. According to (b), $\Omega'_{im}\subset\subset B(x_i, \frac{1}{3}\gamma_1r_x)$. Then $F^{m}_i$ is a proper holomorphic map from $\Omega'_{im}$ to $B_{\mathbb{C}^m}(0, 1)$. By the proper mapping theorem, the image $W^m_i\ni 0$ is an irreducible analytic set in $B_{\mathbb{C}^m}(0, 1)$. We claim that $W^m_i$ has complex dimension $n$. Indeed, if this is not true, pick a generic point $z\in W^m_i$ with $|z|<\tau$. Then $(F^m_i)^{-1}(z)$ has dimension greater than $0$. By (c), $(F^m_i)^{-1}(z)$ is a compact analytic set in $B(x_i, \gamma_2r_x)$. Note $B(x_i, \gamma_2r_x)$ is contained in the Stein manifold $\Omega_i$ defined right below (\ref{76}). Thus, $(F^m_i)^{-1}(z)$ consists of finitely many points. Contradiction.

By (a),  there is a uniform gradient bound of $g^s_i$ on $B(x_i, \frac{1}{3}\gamma_1r_x)$. Then the image of $F^{m}_i(B(x_i, \frac{1}{3}\gamma_1r_x))$ has uniform volume upper bound.
Since $g^s_i$ is convergent for each $s$, $W^m_i$ is convergent in the Hausdorff metric sense to some $W^m$ in $B_{\mathbb{C}^m}(0, 1)$.
By a theorem of Bishop \cite{[B]}, $W^m$ is an analytic set of dimension $n$. By $(b)$, we find $F^m(B(x, \gamma_2r_x))\subset W^m$.

We claim that after adding finitely many functions, $(F^m)^{-1}(z)$ is unique for generic $z\in B_{\mathbb{C}^m}(0, \tau)\cap W^m$.  Note $W^m\cap B_{\mathbb{C}^m}(0, \tau)$ has finitely many irreducible components, say $W^{m1}, ... , W^{mj}$. Let $\Sigma'_m= F^m(B(x, \gamma_2r_x)\backslash G)$. Then $\Sigma'_m$ has codimension at least $4$ in $W^m$, as $F^m$ is Lipschitz. Therefore the regular points of $W^{mh}\backslash\Sigma'_m(1\leq h\leq j)$ are connected. 
According to (c), the preimage of any point in $(W^{mh}\backslash\Sigma'_m)\cap B_{\mathbb{C}^m}(0, \tau)$ is a compact analytic subvariety in $G\cap B(x, \gamma_2r_x)$. Thus we can separate it by adding only finitely many functions. We do this for all $1\leq h\leq j$. Then $(F^m)^{-1}(z)$ is unique for generic $z\in B_{\mathbb{C}^m}(0, \tau)\cap W^m$. Say now $m = N_2$.

Next we prove that for some larger $m$, a small neighborhood of $x$ is homeomorphic to $W^m\cap B_{\mathbb{C}^m}(0, \frac{\tau}{3})$. For any $k>l\geq N_2$, there exists a natural projection $P_{kl}$: $W^k\to W^l$.
We have $P_{kl}\circ F^k = F^l$ on $B(x, \gamma_2r_x)$. Let $z\in W^l\cap B_{\mathbb{C}^l}(0, \frac{1}{3}\tau)$. Then by (d), $P_{kl}^{-1}(z)$ is a compact analytic subvariety in $W^k\cap B_{\mathbb{C}^k}(0, \frac{\tau}{2})$. Hence it contains only finitely many points. Similar as on page $90$ of \cite{[DS]},  the number of $P_{kl}^{-1}(z)$ is actually bounded by the number of locally irreducible component of $z$ in $W^l$. 
As on page $90$ of \cite{[DS]}, we may write $W^m\cap B_{\mathbb{C}^m}(0, \frac{\tau}{3})$ as a finite union of sets $Z_\alpha$ which are given by analytic variety minus analytic subvariety so that 
$(F^m)^{-1}(Z_\alpha)$ is a disjoint union of $n_\alpha$ copies of $Z_\alpha$.
By induction argument as on page $90$ of \cite{[DS]}, after adding finitely many functions, we find the preimage of $W^m\cap B_{\mathbb{C}^m}(0, \frac{\tau}{3})$ is unique. This proves that a small neighborhood of $x$ is homeomorphic to $W^m\cap B_{\mathbb{C}^m}(0, \frac{\tau}{3})$. Say now $m=N_3$.

Next we prove that $W^m\cap B_{\mathbb{C}^m}(0, \frac{\tau}{3})$ is locally irreducible for $m\geq N_3$. If this is not true, we can find $z\subset W^m\cap B_{\mathbb{C}^m}(0, \frac{\tau}{3})$ and $\lambda>0$ with $B_{\mathbb{C}^m}(z, \lambda)\cap W^m\subset B_{\mathbb{C}^m}(0, \frac{\tau}{3})$ and $B_{\mathbb{C}^m}(z, \lambda)\cap W^m$ is connected. Moreover, there exist holomorphic functions $u, v$ on $B_{\mathbb{C}^m}(z, \lambda)\cap W^m$ with $uv = 0$, but $u, v$ are not identically zero. Now $(F^m)^{-1}(B_{\mathbb{C}^m}(z, \lambda)\cap W^m)$ is a connected open set in $B(x, \gamma_2r_x)$. It is clear that $u, v$ are holomorphic on $G'=G\cap (F^m)^{-1}(B_{\mathbb{C}^m}(z, \lambda)\cap W^m)$.
Recall $\mathcal{R}_\epsilon$ in definition \ref{def1}. According to corollary \ref{cor-2}, if $\epsilon = \epsilon(n)$ is sufficiently small, $\mathcal{R}_\epsilon$ is regular in the holomorphic sense. That is, for any $y\in \mathcal{R}_\epsilon$, there exists a holomorphic chart around $y$. Note $\mathcal{R}$ is dense. Assume at $y\in \mathcal{R}\cap G'$, $u(y)\neq 0$. Then $v$ vanishes in a small neighborhood of $y$. By applying theorem $3.9$ in \cite{[CC3]} and the unique continuation of holomorphic functions, we find $v\equiv 0$. Contradiction.

Let $S_m$ be the singular set of $W^m\cap B_{\mathbb{C}^m}(0, \frac{\tau}{3})$. 
We claim that for some larger $m$,  $(F^m)^{-1}(S_m)\subset B(x, \gamma_2r_x)\backslash G$. This is equivalent to saying that $F^m$ maps $G\cap (F^m)^{-1}(B_{\mathbb{C}^m}(0, \frac{\tau}{3}))$ to the regular part of $W^m$. Note this is also equivalent to that the holomorphic structure on $G\cap (F^m)^{-1}(B_{\mathbb{C}^m}(0, \frac{\tau}{3}))$ is the same as the one induced from  $W^m\cap B_{\mathbb{C}^m}(0, \frac{\tau}{3})$.
$S_m$ is a finite union of irreducible analytic sets in $B_{\mathbb{C}^m}(0, \frac{\tau}{3})$. Let $S_m^t(1\leq t\leq l)$ be irreducible components so that $(F^m)^{-1}(S_m^t)$ intersects $G$. Pick a point $y\in (F^m)^{-1}(S_m^t)\cap G$. According to proposition \ref{prop-10}, we can find sequences of holomorphic functions $\lambda^i_j$ on $B(x_i, \gamma_1r_x)$. Also $\lambda^i_j\to \lambda^\infty_j$ and $\lambda^\infty_j$ form a holomorphic coordinate near $y$. If we add these functions to $g^s_i$(with certain normalizations), the dimension of $S_m^t$ decreases. Then the claim follows from a standard induction.

For $x\in M_\infty$ as above, we consider the analytic structure in a neighborhood induced by $F^m$. Let $\mathcal{O}$ be the structure sheaf and $\mathcal{O}_x$ be the stalk at $x$.
Now we prove that after adding finitely many functions, $\mathcal{O}_x$ is normal. 
 There exists an open set $(F^m)^{-1}(B_{\mathbb{C}^m}(0, \frac{\tau}{3}))\supset U\ni x$ and a normalization $\hat{U}\to U\ni x$ so that $\mathcal{O}(\hat{U})$ is a finite module over $\mathcal{O}(U)$. Note by $(14.11)$ on page $89$ of  \cite{[GPR]}, the natural map $\hat{U}\to U$ is a homeomorphism, as $U$ is locally irreducible. Let us assume $\mathcal{O}(\hat{U})$ is generated by $u_1, ..., u_k\in \Gamma(U, \mathcal{F})$ over $\mathcal{O}(U)$. Thus they extend to continuous functions on $U$. According to lemma \ref{lm6}, there exist $\delta>0, \epsilon_0>0$ and holomorphic functions $u^i_j$ on $B(x_i, 2\delta)$ with $u^i_j\to u_j(1\leq j\leq k)$ uniformly on $B(x, \epsilon_0)\cap G$. By adding these functions to $g^s_i$ and shrinking the neighborhood of $x$, we find that $\mathcal{O}_x$ is normal. 
 
 As normal points are open (theorem $14.4$ on page $87$ of \cite{[GPR]}), we proved that for any point $x\in M_\infty$, there exists a neighborhood $U_x\ni x$ so that $U_x$ is a normal analytic variety with structure sheaf $\mathcal{O}(x)$. Let $z\in U_x\cap U_y$. To prove that $M_\infty$ is a normal complex analytic variety, it suffices to prove that $(\mathcal{O}(x))_z = (\mathcal{O}(y))_z$(stalk) for $z\in V\subset U_x\cap U_y$. Let $f\in \Gamma(V, \mathcal{O}(x))$.
Then $f|_{V\cap G}\in \Gamma(G\cap V, \mathcal{O}(y))$. As $V\backslash G$ has real codimension $4$ and $\mathcal{O}(y)$ is normal, $f\in\Gamma(V, \mathcal{O}(y))$.  
This completes the proof of theorem \ref{thm1}.

\end{document}